\newcommand{\R}{\mathbb{R}}
\newcommand{\n}{n} 
\renewcommand{\L}{\mathcal{L}}
\newcommand{\pa}{\partial}
\newcommand{\ve}{\varepsilon}
\newcommand{\rmd}{{\rm d}}
\newcommand{\sL}{\mathscr{L}}
\newcommand{\sN}{\mathscr{N}}
\newcommand{\sR}{\mathscr{R}}
\newcommand{\xx}{{x_1}}
\newcommand{\xy}{{x_2}}
\newcommand{\yx}{{y_1}}
\newcommand{\yy}{{y_2}}
\newcommand{\eee}{equation}
\newcommand{\be}{\begin{\eee}}
\newcommand{\ee}{\end{\eee}}
\DeclareMathOperator{\curl}{curl}
\numberwithin{equation}{section}
\newtheorem{theorem}{Theorem}
\numberwithin{theorem}{section}
\newtheorem{prop}{Proposition}
\numberwithin{prop}{section}
\newtheorem{conj}{Conjecture}
\numberwithin{cor}{section}
\newtheorem{lemma}{Lemma}
\numberwithin{lemma}{section}
\theoremstyle{remark}
\newtheorem*{remark}{Remark}
\theoremstyle{definition}
\title{Flexibility and rigidity in steady fluid motion}
\author{Peter Constantin}
\address{Department of Mathematics, Princeton University, Princeton, NJ 08544}
\email{const@math.princeton.edu}
\author{Theodore D. Drivas}
\address{Department of Mathematics, Princeton University, Princeton, NJ 08544 \newline  Current address: Department of Mathematics, Stony Brook University,
Stony Brook, NY, 11794}
\email{tdrivas@math.princeton.edu, tdrivas@math.stonybrook.edu}
\author{Daniel Ginsberg}
\address{Program in Applied and Computational Mathematics, Princeton University, Princeton, NJ 08544}
\email{ dg42@princeton.edu}
\begin{document}
\date{\today}
\begin{abstract}
Flexibility and rigidity properties of steady (time-independent) solutions of the Euler, Boussinesq and Magnetohydrostatic equations are investigated.
Specifically, certain Liouville-type theorems are established which show that suitable steady solutions with no stagnation points occupying a two-dimensional periodic channel, or axisymmetric solutions in (hollowed out) cylinder,
must have certain structural symmetries.    It is additionally shown that such solutions can be deformed to occupy domains which are themselves small perturbations of the base domain.  As application of the general scheme, Arnol'd stable solutions are shown to be structurally stable.
\end{abstract}
\maketitle


\section{Introduction}

In this paper, we address two fundamental questions pertaining to steady configurations of  fluid motion (modeled here as solutions to the two-dimensional Euler and Boussinesq equations or the three-dimensional Euler equations or Magnetohydrostatic (MHS) equations).  Specifically,
\begin{itemize}
\item \textit{Rigidity:} Given a domain $D_0$ with symmetry, to what extent must steady fluid states $u_0$ conform to the symmetries of the domain?
\item  \textit{Flexibility:} Given domains $D_0, D$  which
are ``close'' in some sense, and a solution $u_0$ of a steady fluid equation
in $D_0$, can one find a steady solution $u$  in $D$ nearby $u_0$?
\end{itemize}
To study the rigidity, we show that steady fluid configurations with no stagnation points (non-vanishing velocity) confined to (topologically) annular regions have the following special property: any quantity which is steadily transported (such as vorticity in two-dimensions or temperature in the Boussinesq fluid) can be constructed as a `nice' function of the streamfunction.  This fact is then exploited by recognizing that, as a consequence, the streamfunction of such a flow must solve a certain nonlinear elliptic equation.  As such, Liouville theorems are used to constrain the possible behavior of all sufficiently regular solutions.  For the two-dimensional Euler equations on the periodic channel, this is the result of  Hamel and Nadirashvili \cite{HN17,HN19} that all steady flows without stagnation points are shears.  A similar statement can be made for a Boussinesq fluid with a certain types of stratification profiles.  For stationary three-dimensional axisymmetric Euler (e.g. pipe flow), we show that non-degenerate solutions must be purely radial for a large class of pressure profiles. That there should be a strong form of rigidity for stationary solutions of 3d Euler was already envisioned by  Harold Grad \cite{G67} who conjectured that all smooth solutions (with a certain topological structure) must conform to a symmetry.

To study the flexibility, we modify an idea introduced by Wirosoetisno and Vanneste \cite{WV05}.
To illustrate the general scheme we note that steady states $u_0$ which are tangent to the boundary can be constructed from a scalar stream function $\psi_0: D_0\to \mathbb{R}$ which solves
\begin{align}
\mathcal{L} \psi_0 &= \mathcal{N}_0(\psi_0), \quad \text{ in } D_0,\\
\psi_0 &= {\rm (const.)}  \quad \text{ on }  \pa D_0,
\end{align}
where $\mathcal{L}$ is some linear elliptic operator and $ \mathcal{N}_0$ is some nonlinear function.
We seek a solution in a nearby  domain $D$ by imposing that the stream function $\psi$ have the form
\be\label{psieqn}
\psi = \psi_0\circ \gamma^{-1},
\ee
 where $\gamma: D_0 \to D$
is a diffeomorphism to be determined. We furthermore require that $\psi$ satisfies
\begin{equation}\label{newpsi}
\mathcal{L} \psi = \mathcal{N}(\psi),  \quad \text{ in } D,
\end{equation}
for a function $\mathcal{N}= \mathcal{N}_0+\chi$ with $\chi$ conforming to the structure of the steady equations  to be determined.
Note that by construction, we automatically have  $\psi = {\rm (const.)}$ on $ \pa D$ since
$ \psi_0 = {\rm (const.)}$ on $\pa D_0$ and $\gamma: \partial D_0 \to \partial D$.
 Thus, if such a diffeomorphism can be found, $\psi$ defines a stream function for a steady solution in $D$ which is tangent to the boundary.

Having fixed the form of $\psi$ by \eqref{psieqn}, we regard \eqref{newpsi} as an equation for the diffeomorphism $\gamma$.
To solve it, we transform it into an equation in $D_0$ by composing with $\gamma$,
\begin{equation}
\mathcal{L}_\gamma \psi_0  =
  \mathcal{N}(\psi_0), \quad \text{where} \quad  \mathcal{L}_\gamma f:=\mathcal{L} \big(f\circ \gamma^{-1}\big)\circ \gamma.
 \label{maineqn}
\end{equation}
Under certain conditions on the original domain $D_0$ and the base steady state $\psi_0$, the equation \eqref{maineqn} becomes a non-degenerate, nonlinear elliptic equation for the components of the map $\gamma$ which can be solved provided the deformations are sufficiently small.  This scheme to produce a $\gamma$ has an infinite-dimensional degree of freedom which can be removed  by fixing the Jacobian of the diffeomorphism $\rho = \det \nabla \gamma$.  This steady state will solve \eqref{newpsi} potentially  with a modified nonlinearity $ \mathcal{N}$ which is completely determined in the construction of the map $\gamma$ with a given  $\rho$.

Theorem \ref{thm1} in \S \ref{flexsec} is our main result in this direction.
We illustrate its consequences in the following three cases
 \begin{itemize}
 \item
  \textit{2d Euler} for domains close to the periodic channel (see Figure \ref{fig:chan}) and Arnol'd stable steady states on compact Riemannian manifolds.
  \item
  \textit{2d Boussinesq}
  for domains close to the periodic channel (see Figure \ref{fig:chan}).
    \item
  \textit{3d axisymmetric Euler}
  for domains close to the cylinder (see Figure \ref{fig:torus}).
\end{itemize}
We now describe these settings in greater detail and state our Theorems for each case before proceeding to the proofs.

\vspace{2mm}

\begin{center}
\textbf{Two-dimensional Euler Equations}
\end{center}
Given a bounded domain $D_0\subset \mathbb{R}^2$ with smooth boundary, a  steady solution of $2d$ Euler satisfies
\begin{align}\label{ee1}
u_0 \cdot \nabla u_0 &= -\nabla p_0, \quad \text{ in } D_0, \\ \label{ee2}
\nabla \cdot u_0 &=0, \quad\quad\quad \text{ in } D_0,\\
u_0\cdot \hat{n} &=0,   \quad\quad\quad \text{ on }  \pa D_0.\label{ee3}
\end{align}
As a consequence of incompressibility, the solutions $u_0$ to the above can be constructed from a  stream function $\psi_0$ via the formula $u_0=\nabla^\perp \psi_0$ with $ \nabla^\perp=(-\partial_2,\partial_1)$.  Since $u_0 =\nabla^\perp \psi_0$, if the velocity is tangent to the boundary then $\psi_0$ must be constant along $\partial D_0$. We consider here steady solutions with additional structure, namely that the  vorticity $\omega_0$ is a Lipschitz function of the stream function $\psi_0$ through $\omega_0 = F_0(\psi_0)$.  As it turns out (see Lemma \ref{eoslemma} below), all sufficiently regular flows in  annular domains and without stagnation points have vorticity satisfying this property for some $F_0$.
 Together with $\omega_0=\nabla^\perp\cdot u_0$ this means $\psi_0$ satisfies
\begin{alignat}{2}
 \Delta \psi_0 &= F_0(\psi_0), \quad &&\text{ in } D_0,\label{gseuler1}\\
 \psi_0 &= {\rm (const.)} , \quad &&\text{ on } \pa D_0.
 \label{gseuler2}
\end{alignat}
On the other hand, clearly any solution of the above for a Lipschitz $F_0$ is the stream function of a steady solution to 2d Euler which is tangent to the boundary.

When the domain $D_0$ is a channel
\begin{align}\label{channel}
D_0 =\{ (y_1,y_2) \ | \  &y_1\in \mathbb{T},  \ y_2\in  [0,1]\},\\
\partial D_0^{\rm top}  =\{ y_2=1\},  \ \  &\ \  \partial D_0^{\rm bot}  =\{ y_2=0\},
\end{align}
solutions of the Euler equations exhibit a certain remarkable rigidity, Theorem 1.1 of \cite{HN17}\footnote{We remark that Theorem \ref{liouE}, in effect, combines the results of  \cite{HN17} and \cite{HN19}.  In the former, they establish the Theorem on an infinite strip for velocities with non-trivial inflow/outflow and in the latter they show solutions in annular domains must be radial (i.e. solutions in periodic channels must be shears).
See also the interesting complementary work \cite{GSPSY19} which establishes that solutions with \textit{single-signed} vorticity (possibly possessing stagnation points) on $\mathbb{R}^2$ must be radial.}:

\begin{theorem}[Rigidity of non-stagnant Euler flows]\label{liouE}
Let $D_0$ be a periodic channel given by \eqref{channel} and suppose that $u_0: D_0\to \mathbb{R}^2$ be a $C^2(D_0)$ solution of \eqref{ee1}--\eqref{ee3} with the property that $\inf_{D_0} u_0> 0$.  Then $u_0$ is a shear flow, namely $u_0(y_1,y_2)= (v(y_2),0)$ for some scalar function $v(y_2)$.
\end{theorem}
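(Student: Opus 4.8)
The plan is to reduce the statement to a Liouville-type rigidity for a semilinear elliptic equation and then to exploit the absence of stagnation points through a maximum-principle argument. First I would invoke Lemma~\ref{eoslemma}: since $D_0$ is topologically annular and $\inf_{D_0}|u_0|>0$, the vorticity is a Lipschitz function of the stream function, $\omega_0=F_0(\psi_0)$, so that $\psi_0$ solves $\Delta\psi_0=F_0(\psi_0)$ in $D_0$ with $\psi_0$ equal to a (possibly different) constant on each of $\partial D_0^{\mathrm{top}}$ and $\partial D_0^{\mathrm{bot}}$, and with $|\nabla\psi_0|=|u_0|>0$ everywhere. Writing $u_0=\nabla^\perp\psi_0=(-\partial_2\psi_0,\partial_1\psi_0)$, the conclusion ``$u_0$ is a shear'' is exactly the assertion $\partial_1\psi_0\equiv 0$, i.e. $u_2\equiv 0$. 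So the entire problem is to prove $\partial_1\psi_0\equiv0$.

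Next I would record the structure forced by $|\nabla\psi_0|>0$. Since $\psi_0$ has no interior critical point, its extrema lie on $\partial D_0$; were the two boundary constants equal, $\psi_0$ would be constant and $u_0\equiv0$, so the two constants differ and we may normalize $\psi_0|_{\partial D_0^{\mathrm{bot}}}<\psi_0|_{\partial D_0^{\mathrm{top}}}$. Then $\psi_0$ is a submersion onto the open interval between these values, each level set is a single non-contractible loop in $\mathbb{T}\times(0,1)$ (a contractible or repeated leaf would trap an interior extremum), and these loops foliate the channel as the streamlines of $u_0$. Differentiating the equation gives that both $\partial_1\psi_0$ and $\partial_2\psi_0$ solve the \emph{same} linear equation $\Delta w=c\,w$ with $c:=F_0'(\psi_0)\in L^\infty$ (the chain rule holds a.e. because $\{\nabla\psi_0=0\}=\varnothing$): $\partial_1\psi_0$ is periodic and vanishes on $\partial D_0$ (as $\psi_0$ is constant there), while $\partial_2\psi_0=-u_1$ is, by tangency and $|u_0|>0$, nonzero on $\partial D_0$, and the ordering of the boundary values moreover forces $\partial_2\psi_0>0$ on both boundary components.

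Granting the key monotonicity $\phi:=\partial_2\psi_0>0$ throughout $\overline{D_0}$, I would finish as follows. Set $\gamma:=\partial_1\psi_0$ and $\sigma:=\gamma/\phi$. A direct computation using $\Delta\gamma=c\gamma$ and $\Delta\phi=c\phi$ (the \emph{same} $c$) yields the divergence-form identity $\div(\phi^2\nabla\sigma)=\phi\,\Delta\gamma-\gamma\,\Delta\phi=0$, in which the unknown potential $c$ has cancelled; this is what renders the merely Lipschitz regularity of $F_0$ harmless. As $\phi^2$ is bounded above and below on $\overline{D_0}$, the operator $\div(\phi^2\nabla\,\cdot\,)$ is uniformly elliptic and $\sigma$ obeys the maximum principle, attaining its extrema on $\partial D_0$. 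But $\gamma=0$ on $\partial D_0$, so $\sigma\equiv0$ there, and periodicity removes the $\mathbb{T}$-direction; hence $\sigma\equiv0$, i.e. $\partial_1\psi_0\equiv0$ and $u_0$ is a shear. Equivalently, $\phi>0$ up to the boundary makes the principal Dirichlet eigenvalue of $\Delta-c$ strictly positive, forcing the Dirichlet solution $\gamma$ to vanish.

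The main obstacle is precisely the monotonicity claim $\partial_2\psi_0\neq0$ in the interior — equivalently, that every streamline is a graph over $\mathbb{T}$, so that the velocity never points purely vertically. I expect this to be the hard analytic core (it is the heart of the Hamel--Nadirashvili theorem), and it cannot be extracted from topology alone: one can build submersions of the channel with non-graphical level sets, so the argument must use the semilinear equation itself. My plan here would be to argue by contradiction from an interior zero of $\partial_2\psi_0$, combining the strong maximum principle and unique continuation for $\Delta w=c\,w$ with the global information that $\partial_2\psi_0>0$ on all of $\partial D_0$, together with the Bernoulli relation $\nabla\bigl(p_0+\tfrac12|\nabla\psi_0|^2\bigr)=-F_0(\psi_0)\nabla\psi_0$ which ties $|u_0|$ to $\psi_0$ along streamlines; establishing a consistent sign for $\partial_2\psi_0$ is where the real work lies.
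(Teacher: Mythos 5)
Your reduction is set up correctly: Lemma \ref{eoslemma} and Lemma \ref{eoslemma2} give $\Delta\psi_0=F_0(\psi_0)$ with distinct boundary constants and $c_0<\psi_0<c_1$ in the interior, both components of $\nabla\psi_0$ solve the same linearized equation $\Delta w=F_0'(\psi_0)w$, and your endgame via $\sigma=\partial_1\psi_0/\partial_2\psi_0$ and the identity $\div(\phi^2\nabla\sigma)=0$ is a clean and valid way to conclude \emph{once} one knows $\partial_2\psi_0>0$ on all of $\overline{D_0}$. But that interior positivity is essentially the theorem in disguise, and your proposal does not prove it --- you flag it yourself as ``where the real work lies.'' Knowing $\partial_2\psi_0>0$ only on $\partial D_0$ is not enough: the strong maximum principle for $\Delta w=cw$ needs a sign of $w$ on an open set (or a nonnegativity assumption globally) before it propagates anything, and unique continuation only excludes $w$ vanishing on open sets, not at isolated interior points or along curves. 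The ingredients you list for the missing step (Bernoulli's relation, unique continuation, contradiction from an interior zero) do not assemble into an argument, and the topological observation that level sets are non-contractible loops does not force them to be graphs over $\mathbb{T}$.

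The paper closes exactly this gap with the sliding method (Theorem \ref{hn}, following Hamel--Nadirashvili): for $\xi=(\xi_1,\xi_2)$ with $\xi_2>0$ one proves $w^\tau(x)=\psi_0(x+\tau\xi)-\psi_0(x)>0$ for all admissible $\tau$, starting from the maximum principle in narrow domains and extending by a compactness/translation argument (Lemma \ref{wtaulem}); the only structural input is $0<\psi_0<c$ from Lemma \ref{eoslemma2}, and the $x_1$-independence is obtained directly in the limit $\xi_2\to 0$, with the monotonicity $\partial_y\psi_0\ge 0$ arising as a byproduct rather than a hypothesis. So as written your proof has a genuine gap at its central step; to repair it you would either have to import the sliding argument to establish the monotonicity --- at which point the quotient trick is no longer needed, since the sliding method already yields the conclusion --- or supply a genuinely different proof that every streamline is a graph, which is the hard analytic content of the theorem.
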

Theorem \ref{liouE} is an example of a Liouville theorem for solutions of the incompressible Euler equations.  It shows that  \textit{any} smooth steady solution of the Euler equations in the channel which never vanishes must be a shear flow, isolating such configurations from non-shear steady states. 
It should be emphasized that there are many examples of non-trivial flows with stagnation points which are non-shear (e.g. cellular flows).  {In fact, Lin and Zeng \cite{LZ11} shows that there exist Cat's--eye vortices arbitrarily close to Couette flow $u_0(y)=(y,0)$ in the $H^s$, $s<3/2$ topology.  }
 Similar results to Theorem \ref{liouE} hold when the domain is the annulus or the disk (under some additional conditions on the solution) \cite{HN19}. We remark that the very interesting recent work of Coti Zelati, Elgindi, and Widmayer \cite{CEW20} shows that the assumption of non-degeneracy is not always necessary for such a Liouville theorem by establishing a similar rigidity of 2d Euler solutions near Poiseuille flow $u_0(y)=(y^2-c,0)$ for $c\geq 0$ which stagnates at $y=\pm \sqrt{c}$.

  In light of the rigidity result of \cite{HN17}, we show in Theorem \ref{2dEthm} that we can perturb away from \textit{any non-vanishing solution} of the two-dimensional Euler equations in the channel $u_0$ to domains
   \begin{align}   \label{domain1}
   D = \{ (x_1,x_2) \ | \ x_1 \in  \mathbb{T},\  b_0&(x_1) \leq x_2 \leq 1 + b_1(x_1)\},\\
\partial D^{\rm top}  =\{ x_2=1 + b_1(x_1) \},  \ \  &\ \  \partial D^{\rm bot}  =\{ x_2=b_0(x_1)\},
\end{align}
   for suitably small $b_0, b_1$ (see Figure \ref{fig:chan}) and obtain a steady solution $u$ in $D$:
   \begin{align}\label{ee1f}
u \cdot \nabla u &= -\nabla p, \quad \text{ in } D, \\ \label{ee2f}
\nabla \cdot u &=0, \quad\quad\quad \text{ in } D,\\
u\cdot \hat{n} &=0,   \quad\quad\quad \text{ on }  \pa D.\label{ee3f}
\end{align}

\vspace{-4mm}
\begin{figure}[h!]
  \includegraphics[height=1.5in]{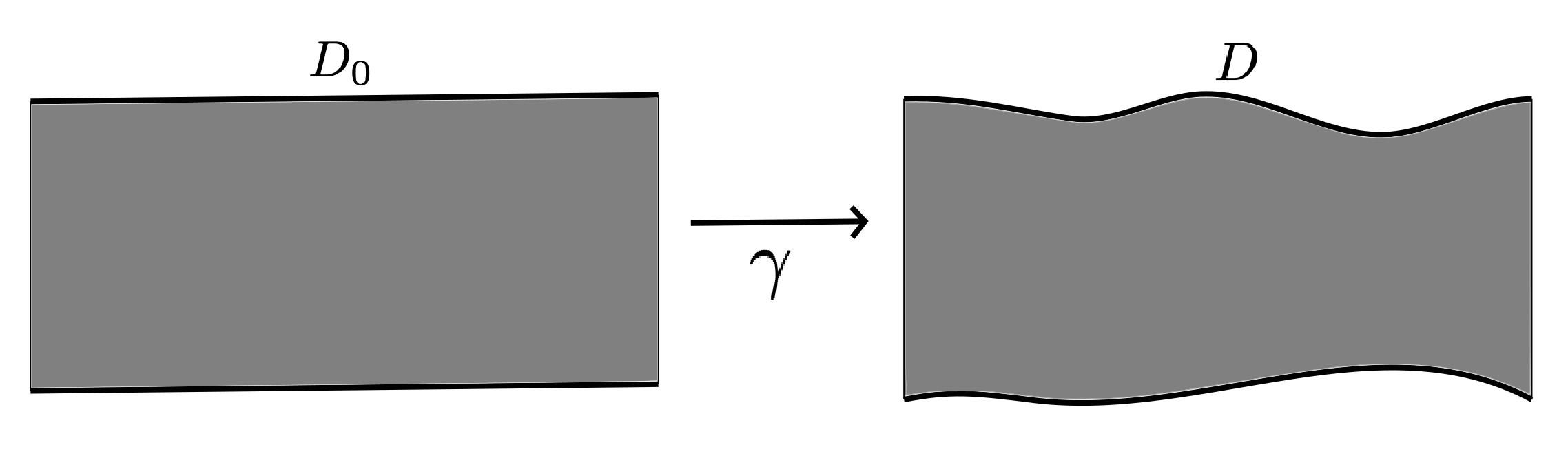}
  \caption{Deformation of periodic channel $D_0$ by $\gamma$.}
  \label{fig:chan}
\end{figure}

\begin{theorem}[Flexibility of non-stagnant Euler flows]\label{2dEthm}
  \label{existence}
Let $D_0$ be defined by \eqref{channel} and  $D$ be defined by \eqref{domain1}.
  Suppose $\psi_0:D_0\to \R$ with $U_0:= \inf_{D_0} |\nabla \psi_0|>0$ and
  $\psi_0 \in C^{k,\alpha}(D_0)$ for some $\alpha > 0, k \geq 3$ such that $u_0= \nabla^\perp \psi_0$ satisfies \eqref{ee2}--\eqref{ee3}.  Then there is an $F_0\in C^{k-2,\alpha}(\mathbb{R})$ such that $\psi_0$ satisfies \eqref{gseuler1}--\eqref{gseuler2}  and constants $\ve_1,\ve_2$ depending only on $U_0, D_0, F_0$ and
$\|\psi_0\|_{C^{k,\alpha}}$ such that if $b_0, b_1:\R \to \R$ and $\rho:D_0\to \mathbb{R}$ with $\int_{D_0} \rho = {\rm Vol}(D)$ satisfy
\begin{align}
\|b_0\|_{C^{k,\alpha}(\R)}
  + \|b_1\|_{C^{k,\alpha}(\R)} &\leq \ve_1,
\\
  \|1-\rho\|_{C^{k,\alpha}(D_0)}&\leq \ve_2,
  \end{align}
there is a diffeomorphism $\gamma : D_0 \to D$ with Jacobian ${\rm det} (\nabla \gamma)= \rho$, and a function $F: \R \to \R$ close in $C^{k-2,\alpha}$ to $F_0$ in
so that $\psi = \psi_0\circ \gamma^{-1} \in C^{k,\alpha}(D)$ and $\psi$ satisfies $\Delta \psi= F(\psi)$.
Thus, $u =\nabla^\perp \psi$  is an Euler solution in $D$ nearby $u_0$.
\end{theorem}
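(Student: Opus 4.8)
The plan is to seek the perturbed stream function in the form $\psi=\psi_0\circ\gamma^{-1}$ as in \eqref{psieqn} and to solve for the diffeomorphism $\gamma$ and the nonlinearity $F$ simultaneously. First I would record the equation of state: since $u_0=\nabla^\perp\psi_0$ is a non-stagnant steady Euler flow on the topologically annular channel $D_0$, Lemma \ref{eoslemma} produces $F_0\in C^{k-2,\alpha}(\R)$ with $\Delta\psi_0=F_0(\psi_0)$ and $\psi_0$ constant on each of $\partial D_0^{\rm top},\partial D_0^{\rm bot}$, so the base profile solves \eqref{gseuler1}--\eqref{gseuler2}. Writing $F=F_0+\chi$ and pulling \eqref{newpsi} back to the fixed domain by composing with $\gamma$ as in \eqref{maineqn}, the task becomes to solve
\begin{equation}
\mathcal{L}_\gamma\psi_0 = F_0(\psi_0)+\chi(\psi_0)\ \ \text{in }D_0,\qquad \det\nabla\gamma=\rho,\qquad \gamma(\partial D_0)=\partial D ,
\end{equation}
where $\mathcal{L}_\gamma$ is the Laplace--Beltrami operator of the pulled-back metric $g=\gamma^*\delta$, a quasilinear second-order operator in $\gamma$ that reduces to $\Delta$ at $\gamma=\mathrm{id}$. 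The boundary correspondence encodes $b_0,b_1$, while prescribing $\rho$ fixes the infinite-dimensional freedom of reparametrizing within the level sets of $\psi_0$; note that $\psi=\psi_0\circ\gamma^{-1}$ is then automatically constant on $\partial D$.

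Next I would linearize at the base point $(\gamma,\chi)=(\mathrm{id},0)$, corresponding to $b_0=b_1=0$, $\rho\equiv1$. Writing $\gamma=\mathrm{id}+v$, a direct computation gives the principal part of the linearized operator as $-\nabla\psi_0\cdot\Delta v$, while the linearized constraint is $\div v=\rho-1$. Imposing the divergence constraint and writing the solenoidal part of $v$ through a potential $\phi$, the linearized equation takes the form
\begin{equation}
u_0\cdot\nabla(\Delta\phi)=\chi(\psi_0)+\big(\text{explicit forcing from }b_0,b_1,\rho\big),\qquad u_0=\nabla^\perp\psi_0 .
\end{equation}
This is transport along streamlines composed with the Laplacian; its symbol degenerates transverse to the foliation (the determinant of the Douglis--Nirenberg principal symbol is proportional to $(u_0\cdot\xi)|\xi|^2$), so the system is \emph{not} elliptic, and this is exactly where the freedom in the nonlinearity must be spent.

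The resolution is a solvability/averaging split over the closed streamlines $\{\psi_0=c\}$. Since $u_0\cdot\nabla(\Delta\phi)$ integrates to zero against $ds/|u_0|$ around each closed level set, the transport equation is solvable precisely when the streamline average of its right-hand side vanishes; as $\oint_{\psi_0=c}ds/|u_0|>0$ — here the hypothesis $U_0=\inf|\nabla\psi_0|>0$ is essential — this condition \emph{determines} $\chi(c)$ uniquely on the range of $\psi_0$ in terms of the forcing, and keeps $\chi$ of size comparable to $(b_0,b_1,\rho-1)$. With $\chi$ so fixed, one solves by integrating along streamlines and then performing an elliptic (Poisson) solve for $\phi$; tracking the regularity of the streamline integration, which gains control only tangent to the foliation, against the elliptic gain is the delicate point, and I expect to work in flux-adapted coordinates (or to run a Nash--Moser iteration) in order to close the Hölder estimates. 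This yields a bounded right inverse of the linearization on the scale $C^{k,\alpha}$, after which the full nonlinear system $\mathcal{L}_\gamma\psi_0-F_0(\psi_0)-\chi(\psi_0)=0$ is solved for $(\gamma,\chi)$ by the inverse function theorem with small data $(b_0,b_1,\rho-1)$; the compatibility $\int_{D_0}\rho={\rm Vol}(D)$ is the single global constraint guaranteeing that a Jacobian-$\rho$ diffeomorphism onto $D$ exists. Unwinding, $\gamma\in C^{k,\alpha}$ gives $\psi=\psi_0\circ\gamma^{-1}\in C^{k,\alpha}(D)$ solving $\Delta\psi=F(\psi)$ with $F=F_0+\chi$ close to $F_0$, so $u=\nabla^\perp\psi$ is the sought Euler solution near $u_0$.

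The main obstacle is precisely the failure of ellipticity: the linearized operator degenerates along streamlines and naively loses derivatives, so the whole construction rests on using the gauge fixing (prescribing $\rho$) together with the freedom in $F$ to absorb the streamline-averaged obstruction, and on the non-degeneracy $U_0>0$ to keep the foliation and the reduced transport operator uniformly controllable. The technical crux is to make the right-inverse bounds and the nonlinear (anisotropic Hölder) estimates uniform in the small data, and to verify that the finitely many period/flux conditions on the periodic channel are met.
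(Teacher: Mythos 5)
Your overall strategy (pull the equation back to $D_0$, decompose $\gamma={\rm id}+\nabla\eta+\nabla^{\perp}\phi$, prescribe the Jacobian to kill the reparametrization freedom, and spend the freedom in $F$ on a streamline--averaged solvability condition) is the paper's strategy, and your use of Lemma \ref{eoslemma} to produce $F_0$ is correct. But the proposal diverges from the paper at the one point where the argument actually closes, and as written it has a genuine gap. The paper's key observation (following \cite{WV05}, made precise in Proposition \ref{pset} and equation \eqref{phieqnprop}) is that the unknown should be $\Phi:=\pa_s\phi=\nabla^{\perp}\psi_0\cdot\nabla\phi$, for which the pulled-back equation is a \emph{non-degenerate second-order elliptic} Dirichlet problem $(\Delta-F_0'(\psi_0))\Phi=(F-F_0)(\psi_0)+\sL_\phi+\sN_\phi$; one solves this by Schauder theory with no loss of derivatives and then recovers $\phi$ from $\Phi$ by integrating along streamlines (Lemma \ref{zeroavg}), the zero-streamline-average condition on $\Phi$ being what determines $F$. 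You instead read the linearization as the degenerate third-order operator $u_0\cdot\nabla(\Delta\phi)$ and propose to invert it transport-first, then Poisson. That factorization holds only at the level of the principal symbol: the true operator is $\Delta\pa_s\phi-F_0'(\psi_0)\pa_s\phi$ plus the commutator $[\pa_s,\Delta]\phi$, and neither $F_0'(\psi_0)\pa_s\phi$ nor the commutator is small, so they cannot be absorbed perturbatively into a transport-then-Poisson scheme; this is exactly why you are forced to flag a loss of derivatives and reach for Nash--Moser. The reordering (elliptic solve for $\pa_s\phi$ first, streamline integration second) is not a cosmetic choice --- it is what makes the fixed-point argument close in $C^{k,\alpha}$ without any tame estimates.

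Two further points you omit are load-bearing. First, solvability of the Dirichlet problem for $\Delta-F_0'(\psi_0)$ is not automatic: the paper verifies Hypothesis {\rm (\textbf{H1}$'$)} by first invoking the rigidity Theorem \ref{liouE} to conclude the base state is a shear, and then using the identity of Lemma \ref{kerlem} to get negative-definiteness; your proposal never addresses why the zeroth-order term $F_0'(\psi_0)$ does not create a kernel. Second, the condition determining $F$ is not the vanishing of the streamline average of the right-hand side, as you assert; because the elliptic inverse is applied first, it is the vanishing of $\mathbb{P}_{\psi_0}(L_0-\Lambda)^{-1}_{\rm hbc}$ of the right-hand side, and the solvability of $K_{\psi_0}[F-F_0]=g$ for $F-F_0$ a function of $\psi_0$ alone is a nontrivial statement --- Hypothesis {\rm (\textbf{H3})}, proved from positivity and {\rm (\textbf{H2})} in Lemma \ref{h1h2h3lem}. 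Your positivity remark $\oint \rmd s/|u_0|>0$ would suffice in a transport-first scheme, but that scheme is the one that does not close. To repair the proof you should adopt the paper's ordering and supply these two ingredients.
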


This theorem is a  generalization of Theorem 1 of Wirosoetisno and Vanneste \cite{WV05} to include non-volume preserving maps $\gamma$ and follows from a much more general theorem which we prove,
Theorem \ref{thm1}.  The freedom of choosing the Jacobian of the map gives an additional mechanism to reach nearby other steady states.
When $b_0$ and $b_1$ are zero, the perturbed domain is again a channel and the solution \textit{must} be a shear flow, which is a consequence of the Theorem of \cite{HN17} discussed above. Nevertheless, due to the fact that the Jacobian is an arbitrary function near unity, our procedure picks out different solutions, allowing us to ``slide" along the space of shear flows in the channel.
Note also that radial solutions on the annulus or the disk can also be deformed. In the case of a simply connected domain such as the disk, the base state must have a stagnation point and
Theorem \ref{2dEthm} applies provided that  $\psi_0$ satisfies the non-degeneracy Hypothesis \textbf{(H2)} below.
{We finally we make some remarks about the case where Hypothesis \textbf{(H2)} is violated.  In this case, if one has that the linearized operator $\Delta - F_0'(\psi_0)$ has a trivial kernel, then a standard implicit function theorem argument produces `nearby' stationary states for `nearby' vorticity profiles $F$.  This condition is implied by Arnol'd stability (see discussion below) and it holds also, for example, for Couette flow whose vorticity is constant so that $F_0'=0$.  On the other hand, this argument does not give much information on the structure of the solution, whereas Theorem \ref{2dEthm} and the other below allow one to understand and control the geometry of the streamlines to a certain extent.}

A different class of flows which display a remarkably general form of rigidity and flexibility on any domains with a symmetry are so-called Arnol'd stable steady states. Recall that a stationary state  on a domain $\Omega\subset \mathbb{R}^2$ is called Arnol'd stable if the vorticity of an Euler solution $\omega= F(\psi)$ satisfies either of the following two conditions
\begin{equation}
-\lambda_1<F'(\psi) <0, \qquad \text{or} \qquad 0< F'(\psi)<\infty
 \label{arnoldscond}
\end{equation}
where $\lambda_1:=\lambda_1(\Omega)>0$ is the smallest eigenvalue of $-\Delta$ in $\Omega$.
See \cite{A69} or Theorem 4.3 of \cite{AK99}.  The above two ranges are referred to type I and type II Arnol'd stability conditions. These conditions ensure that the steady state is either a minimum or a maximum
of the energy (the action) for a fixed vorticity distribution and guarantee that such states are orbitally stable in the topology of $L^2(\Omega)$ of the vorticity.

To emphasize the generality of what follows,  let $(M,g)$ be a two-dimensional Riemannian manifold with smooth boundary $\partial M$ and let $\xi$ be a Killing field for $g$.  Suppose that $\xi$ is tangent to $\partial M$. Consider a solution $\psi$ of
\begin{align}\label{Euler1}
\Delta_g \psi &= F(\psi), \qquad \text{in} \ M,\\
 \psi &= {\rm (const.)} , \ \ \ \text{on} \ \partial M, \label{Euler2}
\end{align}
 where $\Delta_g$ is the Laplace-Beltrami operator on $M$. The velocity  constructed from $\psi$ by
 \be
 u^i = - \sqrt{\det g} g^{ij}\epsilon_{jk} g^{kl}\frac{\partial \psi}{\partial x^l}=: \nabla_g^\perp \psi
 \ee
 is automatically a solution of the Euler equation on $M$ with vorticity $\omega= \frac{\epsilon_{ij}}{\sqrt{\det g}} \frac{\partial}{\partial x^i} (g_{jk} u^k) =F(\psi)$. In the language of
 differential forms $u = (*_g d \psi)^\sharp$ where $*_g$ and $\sharp$ denote
 the Hodge star and musical isomorphism associated to the metric $g$ and $d$
 denotes the differential.
   Since $\xi$ is Killing for $g$, the commutator of the Lie derivative  $\L_\xi $ with the Laplace-Beltrami operator vanishes $[\L_\xi , \Delta_g]=0$ (applied to tensors of any rank).   Moreover, since $\xi$ is tangent to $\partial \Omega$, on which $\psi$ takes constant values, we have that $\L_\xi\psi=0$ on $\partial \Omega$. Thus, differentiating \eqref{Euler1}--\eqref{Euler2} we obtain the equation
\begin{align}\label{p1}
\Big(\Delta_g - F'(\psi) \Big) \L_\xi \psi &= 0, \qquad \text{in} \ M,\\
 \L_\xi \psi &=0  , \ \ \ \ \  \text{on} \ \partial M . \label{p2}
\end{align}
Clearly if the operator $ \Delta_g - F'(\psi) $ has a trivial kernel in $H^1_0$, then $\L_\xi \psi =0$.
A sufficient condition to ensure this  is that $F'(\psi)> -\lambda_1$ where $\lambda_1$ is the first eigenvalue of $-\Delta_g$ on $M$.  Both type I and type II Arnol'd stability conditions ensure this.
Thus, we obtain
\begin{prop}[Rigidity of Arnol'd stable states]\label{propA}
Let $(M,g)$ be a compact two-dimensional Riemannian manifold with smooth boundary $\partial M$ and let $\xi$ be a Killing field for $g$.  Suppose that $\xi$ is tangent to $\partial M$.  Let $u: M\to \mathbb{R}^2$ be a $u= \nabla_g^\perp \psi \in C^2(M)$ Arnol'd stable state.  Then $\L_\xi \psi=0$.
\end{prop}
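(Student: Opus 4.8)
The plan is to exploit the linearized equation already derived above for $w := \L_\xi \psi$. Because $\xi$ is Killing we have $[\L_\xi, \Delta_g]=0$, and because $\xi$ is tangent to $\partial M$ while $\psi$ is constant there, $w$ vanishes on $\partial M$; applying $\L_\xi$ to \eqref{Euler1}--\eqref{Euler2} therefore produces the homogeneous Dirichlet problem \eqref{p1}--\eqref{p2}, namely $(\Delta_g - F'(\psi))w = 0$ in $M$ with $w = 0$ on $\partial M$. The proposition is thus equivalent to the assertion that this operator has trivial kernel in $H^1_0(M)$ under either Arnol'd condition in \eqref{arnoldscond}, and my whole effort reduces to verifying that spectral fact.

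First I would record the basic energy identity. Since $\psi \in C^2(M)$ and $\xi$ is smooth, $w=\xi(\psi)$ is $C^1$ and vanishes on the boundary, hence lies in $H^1_0(M)$; pairing the equation with $w$ and integrating by parts (with no boundary term) gives
\begin{equation}
\int_M |\nabla_g w|^2 \, \rmd V_g = -\int_M F'(\psi)\, w^2 \, \rmd V_g.
\end{equation}
The two stability regimes are then dispatched by the sign of $F'$. In the type II case $0 < F'(\psi)$ the right-hand side is $\leq 0$ while the left-hand side is $\geq 0$, so both vanish; since $F'(\psi) > 0$ pointwise, $\int_M F'(\psi) w^2 \, \rmd V_g = 0$ forces $w \equiv 0$. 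In the type I case $-\lambda_1 < F'(\psi) < 0$ I would instead combine the identity with the variational bound $\int_M |\nabla_g w|^2 \, \rmd V_g \geq \lambda_1 \int_M w^2 \, \rmd V_g$ valid for all $w \in H^1_0(M)$, obtaining $\int_M (\lambda_1 + F'(\psi)) w^2 \, \rmd V_g \leq 0$.

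The one point requiring care --- and the only real obstacle --- is turning the pointwise strict inequality $F'(\psi) > -\lambda_1$ into a uniform spectral gap, since the bare pointwise bound alone does not immediately preclude a vanishing integral. This is where compactness enters: as $M$ is compact, $\psi \in C^2(\overline{M})$, and $F \in C^1$, the continuous function $F'(\psi)$ attains its infimum on $\overline{M}$, so $\lambda_1 + F'(\psi) \geq \delta > 0$ for some $\delta$. The last displayed inequality then reads $\delta \int_M w^2 \, \rmd V_g \leq 0$, which forces $w \equiv 0$. In both cases $\L_\xi \psi = w \equiv 0$, which is the claim.
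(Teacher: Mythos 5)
Your proof is correct and follows essentially the same route as the paper: you derive the homogeneous Dirichlet problem \eqref{p1}--\eqref{p2} for $\L_\xi\psi$ exactly as the paper does, and then verify that $\Delta_g - F'(\psi)$ has trivial kernel in $H^1_0(M)$ under the condition $F'(\psi) > -\lambda_1$, which both Arnol'd regimes guarantee. The only difference is that you spell out the energy identity, the variational characterization of $\lambda_1$, and the compactness step giving the uniform gap $\lambda_1 + F'(\psi)\geq\delta>0$ --- details the paper leaves implicit --- and this is done correctly.
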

In some simple cases, Proposition \ref{propA} implies
\begin{itemize}
\item on the periodic channel with $\xi=e_{y_1}$ and $\zeta=e_{y_2}$, all Arnol'd stable stationary solutions are shears  $u= v(y_2) e_{y_1}$.
\item on the disk (or annulus), with $\xi=e_\theta$ and $\zeta=e_r$,  all Arnol'd stable stationary solutions are radial $u= v(r) e_\theta$.
\item on a spherical cap\footnote{On the cap of a sphere of radius $R$, we use spherical
coordinates $x = (R, \lambda, \phi)$, where $\lambda \in [-\pi, \pi]$ is longitude and $\phi \in [-\pi/2, \pi/2]$ is
latitude, with the poles at $\phi = \pm \pi/2$. } with $\xi=e_\lambda$ and $\zeta=e_\phi$,     all Arnol'd stable stationary solutions are zonal (functions of latitude) $u= v(\phi) e_\lambda$.
\item on manifolds without boundary possessing two transverse Killing fields (e.g. the two-torus or the sphere), there can be no Arnol'd stable steady states (see \cite{WS00}).
\end{itemize}
We remark that in fact the statement for the periodic channel or annulus hold whether or not the
state is Arnol'd stable, see \cite{HN17},\cite{HN19}, provided that the flow has no stagnation points.

Thus,  Proposition \ref{propA} reveals a strong form of rigidity of Arnol'd stable steady states.  However, we also show that are also flexible in the sense that nearby stable steady states exist on wrinkled domains (slight changes of the background metric) with wiggled boundaries.
\\

\vspace{-2mm}
Specifically, consider a steady solution on $M_0\subset \mathbb{R}^2$ satisfying the following hypotheses:
  \begin{itemize}
 \item [\textbf{(H1)}] The vorticity $\omega_0=F_0(\psi_0)$ satisfies
 $F_0'(\psi_0)> -\lambda_1(M_0)$. \vspace{2mm}
  \item [\textbf{(H2)}]
There is a constant $c_{\psi_0}>0$ such that for all $c\in {\rm im}(\psi_0)$ we have
   \be\label{rotation}
  \oint_{\{\psi_0=c\}} \frac{\rmd \ell}{|\nabla \psi_0|} \leq \frac{1}{c_{\psi_0}}.
  \ee

  \end{itemize}

Hypothesis \textbf{(H1)} is ensured for type I and II Arnol'd states by \eqref{arnoldscond}.
Hypothesis \textbf{(H2)}   ensures that the period of rotation of fluid parcels along streamlines (left-hand-side of \eqref{rotation}) is bounded and  is automatically satisfied for any base flow without stagnation points on  annular domains and it holds on simply connected domains if there is non-vanishing vorticity $F_0\neq 0$ at the critical point. We call flows satisfying \textbf{(H2)} \textit{non-degenerate}.  With these, our result is:

\begin{theorem}[Structural stability of Arnol'd stable states]\label{2dEthmarnold}
  \label{existence}
 Let $\alpha\in(0,1)$ and $k \geq 3$. Let  $(M_0,g_0)$ be a compact two-dimensional Riemannian submanifold of $\mathbb{R}^2$ with $C^{k,\alpha}$ boundary.
  Suppose $\psi_0\in  C^{k,\alpha}(M_0)$ is a non-degenerate, Arnol'd stable steady state on $(M_0,g_0)$ with vorticity profile $F_0\in C^{k-2,\alpha}(\mathbb{R})$.
Then there are constants $\ve_1,\ve_2, \ve_3$ depending only on $ M_0, F_0, g_0$ and
$\|\psi_0\|_{C^{k,\alpha}}$ such that if $(M, g)$ is a compact Riemannian manifold
and $\rho:M_0\to \mathbb{R}$  with $\int_{D_0} \rho\  \rmd {\rm vol}_{g_0} = {\rm Vol}_g(D)$ and $g:M_0\to \mathbb{R}^2$ satisfy
\begin{align}
\|\partial M - \partial M_0\|_{C^{k,\alpha}}&\leq \ve_1,
\\
  \|\rho-1\|_{C^{k,\alpha}(M_0)}&\leq \ve_2,\\
  \|g-g_0\|_{C^{k,\alpha}(M_0)}&\leq \ve_3,
  \end{align}
there is a diffeomorphism $\gamma: M_0 \to M$ with Jacobian ${\rm det} (\nabla \gamma)= \rho$, and a function $F: \R \to \R$ close in $C^{k-2,\alpha}$ to $F_0$
so that $\psi = \psi_0\circ \gamma^{-1} \in C^{k,\alpha}(M)$ and $\psi$ satisfies \eqref{Euler1}--\eqref{Euler2} on $(M,g)$.
Thus, $u =\nabla^\perp_g \psi$  is a non-degenerate, Arnol'd stable steady
Euler solution on $(M,g)$ nearby $u_0$ whose vorticity $\omega =  F(\psi)$.
\end{theorem}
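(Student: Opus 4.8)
The plan is to realize Theorem~\ref{2dEthmarnold} as a special case of the general flexibility scheme \eqref{maineqn} and of Theorem~\ref{thm1}, with the linear operator $\L$ there taken to be the Laplace--Beltrami operator $\Delta_g$ of the (unknown, nearby) target metric. By definition an Arnol'd stable steady state $\psi_0$ with vorticity profile $F_0$ solves $\Delta_{g_0}\psi_0=F_0(\psi_0)$ with $\psi_0$ constant on $\partial M_0$, so hypothesis \textbf{(H1)} holds via \eqref{arnoldscond}, and non-degeneracy is exactly \textbf{(H2)}. Seeking the deformed state as $\psi=\psi_0\circ\gamma^{-1}$ and using the naturality of the Laplace--Beltrami operator under a diffeomorphism $\gamma\colon M_0\to M$, namely $(\Delta_g\psi)\circ\gamma=\Delta_{\gamma^*g}\psi_0$, I would recast the target equation \eqref{Euler1} as
\be
\Delta_{\gamma^*g}\,\psi_0 = F(\psi_0)\qquad\text{in } M_0 .
\ee
Since $\psi_0$ is constant on $\partial M_0$ and $\gamma$ maps $\partial M_0$ onto $\partial M$, the boundary condition \eqref{Euler2} is automatic; the task reduces to producing a pair $(\gamma,F)$ with $\det\nabla\gamma=\rho$ solving this equation for data $(g,\rho,\partial M)$ close to $(g_0,1,\partial M_0)$, at which base point the equation holds with $(\gamma,F)=(\mathrm{id},F_0)$.

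Second, I would reduce the displayed equation to a non-degenerate nonlinear elliptic problem. The map $\gamma\mapsto\psi_0\circ\gamma^{-1}$ is invariant under reparametrizations of $\gamma$ preserving the level sets of $\psi_0$, an infinite-dimensional gauge freedom; prescribing the Jacobian $\det\nabla\gamma=\rho$ removes it and leaves, modulo the streamline foliation, a single scalar unknown measuring the transverse displacement of streamlines. The genuine content of the equation is that $\Delta_{\gamma^*g}\psi_0$ be constant along each level set $\{\psi_0=c\}$, after which $F(c)$ is read off by averaging along that set. Here \textbf{(H2)} is essential: integrating across the foliation via the co-area formula introduces the period integral $\oint_{\{\psi_0=c\}}\rmd\ell/|\nabla\psi_0|$ of \eqref{rotation} as the coefficient multiplying $F$, and \textbf{(H2)} (together with the $C^{k,\alpha}$ bound on $\psi_0$, which bounds $|\nabla\psi_0|$ from above) keeps it bounded away from zero and infinity. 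This makes the determination of $F$ from $\gamma$ well defined and certifies $|\nabla\psi_0|>0$ along streamlines, so the reduced equation is uniformly elliptic in the transverse variable.

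Third, I would solve the reduced system by the implicit function theorem on the H\"older scale $C^{k,\alpha}\to C^{k-2,\alpha}$. Differentiating in $\gamma=\mathrm{id}+t v$ at $t=0$, the variation of $\psi$ is $-\nabla\psi_0\cdot v$, so only the component of $v$ transverse to the streamlines contributes while the tangential part is pure gauge; linearizing $\Delta_g\psi-F(\psi)$ and using $\Delta_{g_0}\psi_0=F_0(\psi_0)$ then produces the operator $\Delta_{g_0}-F_0'(\psi_0)$ acting on the transverse displacement, exactly as in \eqref{p1}--\eqref{p2}, together with a term $-\phi(\psi_0)$ from varying $F=F_0+t\phi$. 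I would split the target into its average over each streamline and the complementary fluctuation: the fluctuation is handled by inverting $\Delta_{g_0}-F_0'(\psi_0)$, which by \textbf{(H1)}, $F_0'(\psi_0)>-\lambda_1(M_0)$, has trivial kernel in $H^1_0(M_0)$ and is an isomorphism (with Schauder estimates upgrading the inverse to $C^{k,\alpha}$), while the streamline averages, where $\phi$ and the $\rho$-data enter, are solved using the period integral kept non-degenerate by \textbf{(H2)}. The implicit function theorem then yields, for small $\ve_1,\ve_2,\ve_3$, a locally unique $(\gamma,F)$ with $\gamma$ a $C^{k,\alpha}$ diffeomorphism (being $C^{k,\alpha}$-close to the identity) satisfying $\det\nabla\gamma=\rho$ and $F$ close to $F_0$ in $C^{k-2,\alpha}$.

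Finally, openness of the hypotheses gives the stated conclusion: since $F$ is $C^{k-2,\alpha}$-close to $F_0$ and $g$ close to $g_0$ (so $\lambda_1$ varies continuously), the strict inequalities \eqref{arnoldscond} and the bound \eqref{rotation} persist for $\psi=\psi_0\circ\gamma^{-1}$ on $(M,g)$, whence $u=\nabla^\perp_g\psi$ is again a non-degenerate, Arnol'd stable steady Euler solution with vorticity $F(\psi)$ near $u_0$. I expect the main obstacle to lie in carrying out the second and third steps jointly: arranging the gauge fixing by $\rho$ so that the reduced operator is \emph{simultaneously} uniformly elliptic (via \textbf{(H2)}) and has invertible linearization (via \textbf{(H1)}), while keeping $\gamma$ a diffeomorphism and correctly accounting for the coupled perturbations of boundary, metric, and Jacobian---precisely the bookkeeping that Theorem~\ref{thm1} is designed to package.
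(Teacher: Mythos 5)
Your overall strategy is the paper's: pull the equation back to $M_0$ via $(\Delta_g\psi)\circ\gamma=\Delta_{\gamma^*g}\psi_0$, treat it as a special case of the general deformation result (Theorem \ref{thm1}) with $L_0=\Delta_{g_0}$ and $L=\Delta_g$, verify \textbf{(H1$'$)} from Arnol'd stability and \textbf{(H2)} from non-degeneracy, fix the gauge by prescribing the Jacobian, and obtain persistence of stability from openness of the Arnol'd conditions. That is essentially all the paper's proof of this theorem consists of, and that part of your proposal is correct.

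The gap is in your third step, in how $F$ is determined. You propose to split the right-hand side into its streamline average and the complementary fluctuation, invert $\Delta_{g_0}-F_0'(\psi_0)$ on the fluctuation, and solve for the averages ``using the period integral.'' This does not work as stated: the constraint is not on the average of the right-hand side but on the \emph{solution}; one needs $\partial_s\phi$ solving $(L_0-\Lambda)\partial_s\phi=(F-F_0)(\psi_0)+\sN_\phi+\sL_\phi$ to have zero average on every streamline, since otherwise it cannot be integrated up to a single-valued periodic $\phi$ (Lemma \ref{zeroavg}). Because $(L_0-\Lambda)^{-1}$ does not commute with the streamline projector $\mathbb{P}_{\psi_0}$, the average and fluctuation parts do not decouple, and the equation for $F$ becomes $\mathbb{P}_{\psi_0}(L_0-\Lambda)^{-1}_{\rm hbc}\left[(F-F_0)(\psi_0)\right]=(\text{given function of }\psi_0)$, i.e.\ the surjectivity of the operator $K_{\psi_0}$ in Hypothesis \textbf{(H3)}. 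This is a genuine solvability statement, not a division by the period $\mu(c)$: the paper proves it (Lemma \ref{h1h2h3lem}) from the \emph{positive-definiteness} in \textbf{(H1$'$)} together with \textbf{(H2)}, via coercivity of the compression $\mathbb{P}_{\psi_0}(L_0-\Lambda)^{-1}_{\rm hbc}\mathbb{P}_{\psi_0}$, and explicitly remarks that mere invertibility of $L_0-\Lambda$ cannot be expected to suffice. Your appeal to trivial kernel plus boundedness of $\mu$ therefore leaves the key step unjustified; the repair is to invoke \textbf{(H3)} and its derivation from \textbf{(H1$'$)} and \textbf{(H2)} (or, when the domain admits a Killing symmetry, the explicit construction in Lemmas \ref{h3lem} and \ref{h3lem2}).
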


We remark that hypothesis necessary to run Theorem \ref{2dEthmarnold}, Hypothesis \textbf{(H1)}, is weaker than  Arnol'd stability since it allows the deformation of states of constant vorticity $F'(\psi_0)=0$.
Theorem \ref{2dEthmarnold} shows that such steady states are non-isolated from other stable stationary states, even fixing the domain and metric, since the Jacobian $\rho$ can be freely chosen.
 A very interesting open issue is whether any time-independent solution of the two-dimensional Euler equation can be isolated from other steady solutions.
Note that that the deformation scheme can be repeated to deformation between two ``far apart" domains provided along the path of steady states Hypothesis \textbf{(H1)} and  \textbf{(H2)} remain true.  {Finally, as discussed above, if one is not interested in controlling aspects of the streamline geometry of the new steady states, then an implicit function argument can be used to dispense with the non-degeneracy hypothesis \textbf{(H2)} and allow the construction of solutions with nearby vorticity profiles $F$. See, for example, the work of Choffrut and Sver\'{a}k  \cite{CS12}.}

\vspace{2mm}
\begin{center}
\textbf{Two-dimensional Boussinesq equations}
\end{center}
Given a domain $D_0\subset \mathbb{R}^2$ with smooth boundary, steady states of the Boussinesq system satisfy
\begin{align}\label{bous1}
u_0 \cdot \nabla u_0 &= -\nabla p_0+  \theta_0 e_2, \quad \text{ in } D_0, \\ \label{bous2}
  u_0\cdot \nabla \theta_0 &= 0, \quad\quad\quad\quad\quad\quad \   \text{ in } D_0,\\
\nabla \cdot u_0 &=0, \quad\quad\quad\quad\quad\quad \  \text{ in } D_0,\\
u_0\cdot \hat{n} &=0,    \quad\quad\quad\quad\quad\quad \ \text{ on }  \pa D_0.\label{bous3}
\end{align}
Introducing the vorticity $ \omega_0 = \nabla^\perp \cdot u_0$, equation \eqref{bous1} can be written as
\begin{align} \label{labelvortform}
  \omega_0 u^\perp_0  &=   \nabla P_0 + \theta_0  e_2, \\
   -P_0&:= p_0+ \frac{1}{2} |u_0|^2.
\end{align}
Letting $u_0=\nabla^\perp \psi_0$, $\omega_0= \Delta \psi_0 $ and $u^\perp_0=-\nabla \psi_0$ equation \eqref{labelvortform} reads
\begin{equation}
  -\Delta \psi_0 \nabla \psi_0   =\nabla P_0+ \theta_0 e_2.
 \label{}
\end{equation}
The Grad-Shafranov-like equation (analogous to equations \eqref{gseuler1}-
\eqref{gseuler2} of 2d Euler) is obtained by assuming
that $\theta_0, P_0$ can be constructed from the stream function, in
the sense that
\begin{align}
 \theta_0(y_1,y_2) &= \Theta_0(\psi_0(y_1,y_2)),\label{boustheta}\\
 P_0(y_1,y_2) &= -y_2 \Theta_0(\psi_0(y_1,y_2))  - G_0(\psi_0(y_1,y_2)),
 \label{bouspressure}
\end{align}
for smooth functions $G_0, \Theta_0$.  This again turns out to be completely general  provided that $u_0$ never vanishes, see Lemma \ref{eoslemma}.
In this case, provided that $G_0, \Theta_0$ are sufficiently smooth the stream function must satisfy
\begin{alignat}{2} \label{gsbous1}\
 \Delta \psi_0 - y_2 \Theta_0'(\psi_0) - G_0'(\psi_0) &= 0,  \quad\quad\quad\ \ \ \text{ in } D_0, \\
\psi_0 &= {\rm (const.)},  \quad \text{ on }  \pa D_0. \label{gsbous2}
\end{alignat}
Given a solution $\psi_0$ to \eqref{gsbous1}--\eqref{gsbous2}, the function $u_0 = \nabla^\perp \psi_0$ solves \eqref{bous1}-
\eqref{bous3} with temperature $\theta_0$ determined by \eqref{boustheta}
and pressure $P_0$ determined by \eqref{bouspressure}.

As for 2d Euler, if $D_0$ is a periodic channel the Boussinesq equations have a certain rigidity.  Specifically, \textit{all} smooth steady states with nowhere vanishing velocity must be shear flows and the temperature and pressures must satisfy the equations of state \eqref{boustheta},\eqref{bouspressure} for some  Lipschitz  functions $\Theta_0$ and $G_0$:

\begin{theorem} [Rigidity of non-stagnant Boussinesq flows] \label{liouBous}
Let $D_0$ be a periodic channel given by \eqref{channel} and suppose that $u_0: D_0\to \mathbb{R}^2$ and $\theta_0: D_0\to \mathbb{R}$ be a $C^2(D_0)$ solution of \eqref{bous1}--\eqref{bous3} with $\inf_{D_0} u_0> 0$.  Then there exists Lipschitz  functions $\Theta_0,G_0: \mathbb{R}\to \mathbb{R}$ such that  \eqref{boustheta},\eqref{bouspressure} hold and if furthermore
\be
\Theta_0'(\psi_0)\leq 0,
\ee
then $u_0$ is a shear flow, namely $u_0(x,y)= (v(y),0)$ for some scalar function $v(y)$.
\end{theorem}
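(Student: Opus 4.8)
The plan is to reduce the system to the scalar Grad--Shafranov equation \eqref{gsbous1} and then to establish the shear structure by a maximum-principle argument adapted to the stratified setting. First I would invoke Lemma \ref{eoslemma} to get the equations of state. Since $u_0=\nabla^\perp\psi_0$ has no stagnation points in the topologically annular channel \eqref{channel} and $u_0\cdot\nabla\theta_0=0$, the temperature $\theta_0$ is constant along the streamlines (the level sets of $\psi_0$); because $|\nabla\psi_0|$ is bounded below, this produces a Lipschitz $\Theta_0$ with $\theta_0=\Theta_0(\psi_0)$, which is \eqref{boustheta}. Feeding this into the vorticity form \eqref{labelvortform} shows that $\nabla\big(P_0+y_2\Theta_0(\psi_0)\big)$ is everywhere parallel to $\nabla\psi_0$, so the same reasoning yields a Lipschitz $G_0$ with $P_0+y_2\Theta_0(\psi_0)=-G_0(\psi_0)$, i.e. \eqref{bouspressure}. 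This reduces \eqref{bous1}--\eqref{bous3} to $\Delta\psi_0=y_2\Theta_0'(\psi_0)+G_0'(\psi_0)$ with $\psi_0$ constant on $\partial D_0$.

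It then suffices to prove that $v:=\partial_{y_1}\psi_0\equiv 0$, since this forces $\psi_0=\psi_0(y_2)$ and hence $u_0=(-\partial_{y_2}\psi_0,0)$ is a shear. Exploiting the horizontal-translation symmetry of the channel (the analogue of the Killing-field argument of Proposition \ref{propA}; note that $y_2$, and hence the nonlinearity, is unchanged under $y_1$-translation), I would differentiate \eqref{gsbous1} in $y_1$ and in $y_2$. Writing $q:=y_2\Theta_0''(\psi_0)+G_0''(\psi_0)$ and $\phi:=\partial_{y_2}\psi_0$, one finds
\begin{equation}
\Delta v = q\,v,\qquad \Delta\phi = q\,\phi+\Theta_0'(\psi_0),
\end{equation}
together with $v=0$ on $\{y_2=0\}\cup\{y_2=1\}$ (where $\psi_0$ is constant) and $v$ being $y_1$-periodic.

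The heart of the matter is a ground-state substitution. Assuming the monotonicity $\phi>0$ on $\overline{D_0}$, set $w:=v/\phi$, which vanishes on the horizontal boundaries and is periodic in $y_1$. Using the two equations above, the potential $q$ cancels and one obtains the key identity
\begin{equation}
\operatorname{div}\!\big(\phi^2\nabla w\big)=\phi\,\Delta v-v\,\Delta\phi=-\Theta_0'(\psi_0)\,\phi\,w.
\end{equation}
Multiplying by $w$ and integrating by parts (all boundary terms vanish) yields
\begin{equation}
\int_{D_0}\phi^2\,|\nabla w|^2\,\rmd y=\int_{D_0}\Theta_0'(\psi_0)\,\phi\,w^2\,\rmd y.
\end{equation}
The left side is nonnegative, while the stratification hypothesis $\Theta_0'(\psi_0)\le 0$ together with $\phi>0$ makes the right side nonpositive; hence both vanish, $\nabla w\equiv 0$, and since $w$ vanishes on the boundary, $w\equiv 0$. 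Therefore $v=\phi w\equiv 0$ and $u_0$ is a shear.

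The main obstacle is the monotonicity $\phi=\partial_{y_2}\psi_0\neq 0$ throughout $\overline{D_0}$ (equivalently, that each streamline is a graph over the horizontal, carrying the favorable orientation $\Theta_0'(\psi_0)\,\partial_{y_2}\psi_0\le 0$ selected by the stratification hypothesis). The assumption $\inf_{D_0}|u_0|>0$ only gives $|\nabla\psi_0|>0$, which does not by itself preclude a streamline from having a vertical tangent, where $\partial_{y_2}\psi_0$ would vanish; ruling this out is precisely the delicate part of Hamel--Nadirashvili \cite{HN17,HN19} and is handled by their sliding and maximum-principle analysis using the channel geometry and boundary conditions. A secondary technical point is that $\Theta_0,G_0$ are only Lipschitz, so $q$ and the differentiated equations are not classically defined; since $q$ cancels in the final identity, this is circumvented by deriving that identity directly in weak form, or by smoothing the profiles and passing to the limit.
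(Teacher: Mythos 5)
Your derivation of the equations of state matches the paper's first step (Lemma \ref{eoslemma} applied to $\theta_0$ and then to $P_0+y_2\Theta_0(\psi_0)$), and the ground-state/Picone identity
$\operatorname{div}(\phi^2\nabla w)=\phi\,\Delta v-v\,\Delta\phi$ with $w=v/\phi$ is algebraically correct. But the argument has a genuine gap at exactly the point you flag: you assume $\phi=\partial_{y_2}\psi_0>0$ throughout $\overline{D_0}$, and this does not follow from $\inf_{D_0}|u_0|>0$, which only gives $|\nabla\psi_0|>0$. A priori a streamline could have a vertical tangent in the interior, where $\phi$ vanishes and $w=v/\phi$ is not even defined. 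This monotonicity is not a ``secondary technical obstacle'' that can be outsourced: a posteriori it is equivalent to the conclusion of the theorem (once streamlines are horizontal lines, $\phi$ is automatically one-signed), so assuming it is close to assuming what is to be proved. Moreover, the way the paper's machinery establishes the relevant monotonicity (for the limit functions $\Psi$ in the proof of Lemma \ref{wtaulem}) is by applying the maximum principle to the $y$-differentiated equation, which uses the very same sign condition $g_y=-\Theta_0'(\psi_0)\ge 0$; so the stratification hypothesis is consumed there, not only in your final integral identity. If you do import the full sliding analysis to get $\phi>0$, the sliding method already yields $x$-independence directly and your integration-by-parts step becomes redundant.

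For comparison, the paper's route avoids the monotonicity issue entirely: it reduces to $\Delta\psi_0 - y_2\Theta_0'(\psi_0)-G_0'(\psi_0)=0$ and invokes the general Liouville Theorem \ref{hn} (the sliding method of Hamel--Nadirashvili, extended to allow the extra terms $f(y)\pa_y\psi$ and $g(y,\psi)$) under the alternative hypothesis $0<\psi_0<c$ in $D_0$, which is supplied unconditionally by Lemma \ref{eoslemma2}; the condition $\Theta_0'\le 0$ enters only as $g_y\ge 0$ in that theorem. Your secondary concern about the Lipschitz regularity of $\Theta_0,G_0$ is real for the differentiated equations $\Delta v=qv$, $\Delta\phi=q\phi+\Theta_0'(\psi_0)$ (they involve $\Theta_0'',G_0''$), and the proposed weak-form or smoothing fix would need to be carried out, but this is minor next to the monotonicity gap.
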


 We next establish the flexibility of Boussinesq solutions by proving the existence of steady solutions on the channel to solutions on nearby domains. Fix $D_0$ to be a channel defined by \eqref{channel} and fix functions $\psi_0, \Theta_0, G_0$ satisfying
 \eqref{gsbous1}--\eqref{gsbous2} on $D_0$.
Given a  function $\Theta$, we then deform $\psi_0$ to obtain a steady solution $ \psi$ to   \eqref{gsbous1}   defined on $D$ given by \eqref{domain1} (for suitably small $b_0, b_1$) with temperature profile $\Theta$ and some vorticity profile $G$.  As a result, $u=\nabla^\perp \psi$ and $\theta = \Theta(\psi)$ satisfy the steady Boussinesq equations on $D$:
 \begin{align}\label{bous1f}
u \cdot \nabla u &= -\nabla p+  \theta e_2, \quad \ \ \  \text{ in } D, \\ \label{bous2f}
  u\cdot \nabla \theta &= 0, \quad\quad\quad\quad\quad\quad \   \text{ in } D,\\
\nabla \cdot u &=0, \quad\quad\quad\quad\quad\quad \  \text{ in } D,\\
u\cdot \hat{n} &=0,    \quad\quad\quad\quad\quad\quad \ \text{ on }  \pa D.\label{bous3f}
\end{align}
Specifically, we prove
\begin{theorem} [Flexibility of non-stagnant Boussinesq flows]  \label{2dBthm}
Let $D_0$ be defined by \eqref{channel} and  $D$ be defined by \eqref{domain1}.
  Suppose  $\psi_0:[0,1]\to \R$ is a shear with   $\psi_0 \in C^{k,\alpha}(D_0)$ for some $\alpha > 0, k \geq 3$ satisfying \eqref{gsbous1}--\eqref{gsbous2} for a given $G_0, \Theta_0\in C^{k-1,\alpha}(\mathbb{R})$ having no stagnation points $U_0:= \inf_{D_0} |\nabla \psi_0|>0$.
Then there are constants $\ve_1,\ve_2,\ve_3$ depending only on $U_0, D_0, \Theta_0, G_0$ and
$\|\psi_0\|_{C^{k,\alpha}}$ such that if $b_0, b_1:\R \to \R$, $\Theta :\R\to \mathbb{R}$ and $\rho:D_0\to \mathbb{R}$ with $\int_{D_0} \rho = {\rm Vol}(D)$ satisfy
\begin{align}
\|b_0\|_{C^{k,\alpha}(\R)}
  + \|b_1\|_{C^{k,\alpha}(\R)} &\leq \ve_1,
\\
  \|1-\rho\|_{C^{k,\alpha}(D_0)}&\leq \ve_2,\\
    \|\Theta_0'-\Theta'\|_{C^{k-2,\alpha}(\R)}&\leq \ve_3,
  \end{align}
there is a diffeomorphism $\gamma: D_0 \to D$  with Jacobian ${\rm det} (\nabla \gamma)= \rho$, and a function $G: \R \to \R$ close to $G_0$
so that $\psi = \psi_0\circ \gamma^{-1} \in C^{k,\alpha}(D)$ and $\psi$ satisfies
\be
\Delta \psi - x_2 \Theta'(\psi) - G'(\psi)=0\quad \text{ in } \quad D.
\ee
Thus, $u =\nabla^\perp \psi$ and $\theta = \Theta(\psi)$  defines a Boussinesq solution in $D$ nearby $u_0=\nabla^\perp \psi_0$,  $\theta_0=\Theta_0(\psi_0)$.
\end{theorem}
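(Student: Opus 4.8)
The plan is to follow the deformation scheme of the Introduction, equations \eqref{psieqn}--\eqref{maineqn}, specializing $\L=\Delta$ and $\N(\psi)=x_2\Theta'(\psi)+G'(\psi)$, and to produce the diffeomorphism $\gamma$ by a Hölder-space implicit function theorem anchored at the shear base state. First I would reduce to a fixed-domain problem: setting $\psi=\psi_0\circ\gamma^{-1}$ automatically enforces $\psi=\mathrm{(const.)}$ on $\partial D$, and composing the target equation $\Delta\psi-x_2\Theta'(\psi)-G'(\psi)=0$ with $\gamma$ yields, on $D_0$,
\[
\L_\gamma\psi_0=\gamma_2\,\Theta'(\psi_0)+G'(\psi_0),
\]
where $\gamma_2$ is the second component of $\gamma$ (the image of the height $x_2$) and $\L_\gamma f=\Delta(f\circ\gamma^{-1})\circ\gamma$. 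This is one scalar equation, to be closed by the Jacobian prescription $\det\nabla\gamma=\rho$ and the boundary matching $\gamma:\partial D_0^{\rm bot}\to\partial D^{\rm bot}$, $\partial D_0^{\rm top}\to\partial D^{\rm top}$, encoded by $b_0,b_1$.

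Second, I would use the shear structure to eliminate the unknown profile $G$. Since $\psi_0=\psi_0(y_2)$ is strictly monotone (as $|\nabla\psi_0|=|\psi_0'|\ge U_0>0$), the level sets of $\psi_0$ are the horizontal circles $\{y_2=\mathrm{const}\}$, and I decompose any function into its $y_1$-average $\langle f\rangle(y_2)$ and its mean-zero fluctuation $f^*$. Averaging the transported equation, and using that $\psi_0$ and $\Theta'(\psi_0)$ depend on $y_2$ alone, gives
\[
G'(\psi_0(y_2))=\langle\L_\gamma\psi_0\rangle(y_2)-\langle\gamma_2\rangle(y_2)\,\Theta'(\psi_0(y_2)),
\]
which \emph{defines} $G'$ a posteriori once $\gamma$ is known, by composing with $\psi_0^{-1}\in C^{k,\alpha}$ and extending off $\mathrm{im}(\psi_0)=\mathrm{im}(\psi)$; the regularity $G'\in C^{k-2,\alpha}$ and closeness to $G_0'$ follow from $\gamma$ near the identity and $\Theta$ near $\Theta_0$. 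The remaining fluctuation part, $(\L_\gamma\psi_0)^*=\gamma_2^*\,\Theta'(\psi_0)$, together with $\det\nabla\gamma=\rho$, is then a determined system for the two components of $\gamma$.

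Third, I would write $\gamma=\mathrm{id}+v$, set up the system as $\Phi(v;b_0,b_1,\rho,\Theta)=0$ with $\Phi(0;0,0,1,\Theta_0)=0$ (the base shear), and apply the implicit function theorem. Using the identity $\tfrac{d}{dt}\big|_0\L_{\gamma_t}\psi_0=-\Delta(v\cdot\nabla\psi_0)+v\cdot\nabla(\Delta\psi_0)$ and $\psi_0=\psi_0(y_2)$, the linearization of $\L_\gamma\psi_0$ at $v=0$ collapses to $-\psi_0'\Delta v_2-2\psi_0''\partial_2 v_2$, so the linearized fluctuation equation is the scalar elliptic equation for $v_2^*$,
\[
-\psi_0'\,\Delta v_2^*-2\psi_0''\,\partial_2 v_2^*-\Theta_0'(\psi_0)\,v_2^*=r^*,
\]
while the Jacobian constraint linearizes to $\div v=s$. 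The decisive simplification from the shear base state is that the system decouples: the fluctuation equation involves $v_2^*$ alone; then $\langle v_2\rangle$ is fixed by $\partial_2\langle v_2\rangle=\langle s\rangle$ and the boundary data; and finally $v_1$ is recovered from $\partial_1 v_1=s-\partial_2 v_2$, whose $y_1$-mean vanishes by construction.

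The main obstacle is the invertibility of the scalar fluctuation operator on mean-zero Hölder functions with Dirichlet data. Its ellipticity is guaranteed precisely by the non-stagnation hypothesis $\psi_0'\neq0$, which lets me divide by $\psi_0'$ and write it in divergence form as $-(\psi_0')^{-1}\div\big((\psi_0')^2\nabla w\big)-\Theta_0'(\psi_0)w$, whose symmetric quadratic form is $\int(\psi_0')^2|\nabla w|^2-\int\psi_0'\Theta_0'(\psi_0)w^2$. This is coercive on mean-zero functions because the Poincaré gap in the periodic variable controls the bounded zeroth-order term (and it is automatically coercive when $\Theta_0'\le0$, matching the rigidity condition of Theorem~\ref{liouBous}); in general its triviality is the non-degeneracy of the base fluctuation operator built into the construction. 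Trivial kernel plus Schauder estimates yield invertibility of $D_v\Phi$ between the relevant Hölder spaces, and the implicit function theorem then produces $\gamma$ with $\det\nabla\gamma=\rho$ for data within $\ve_1,\ve_2,\ve_3$; the smallness of $v$ makes $\gamma$ a diffeomorphism onto $D$, the volume condition $\int_{D_0}\rho=\Vol(D)$ being exactly the solvability requirement for the Jacobian equation. Undoing the composition, $\psi=\psi_0\circ\gamma^{-1}$ solves $\Delta\psi-x_2\Theta'(\psi)-G'(\psi)=0$ in $D$, so that $u=\nabla^\perp\psi$, $\theta=\Theta(\psi)$ solve the steady Boussinesq system with pressure determined through $P=-x_2\Theta(\psi)-G(\psi)$.
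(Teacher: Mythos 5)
Your proposal is correct in overall strategy and reaches the result by a genuinely different organization of the same core ideas. The paper does not argue directly: it casts the Boussinesq problem as an instance of the general deformation Theorem \ref{thm1} (with $L_0=\Delta$, $F_0=G_0'$, $G_0(y,\psi)=y_2\Theta_0'(\psi)$), decomposes $\gamma-\mathrm{id}=\nabla\eta+\nabla^\perp\phi$ so that the Jacobian constraint becomes a Neumann problem for $\eta$ and the pulled-back equation a Dirichlet problem for $\partial_s\phi$, chooses the free profile (your $G'$) through the streamline-averaging operator $K_{\psi_0}$ of Hypothesis (\textbf{H3}) so that $\partial_s\phi$ integrates back to a periodic $\phi$ (Lemma \ref{zeroavg}), and closes with an explicit iteration (Lemmas \ref{uniformbds}, \ref{cauchybds}) rather than an implicit function theorem. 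Your direct component splitting $(v_1,v_2)$ with the $y_1$-average/fluctuation decomposition is the shear-flow specialization of exactly this: for a shear the streamline average is the $y_1$-average, your determination of $G'$ from the averaged equation is the paper's choice of $F$ via (\textbf{H3}), your recovery of $v_1$ from $\partial_1 v_1=s-\partial_2 v_2$ together with the compatibility $\int_{D_0}\rho=\Vol(D)$ fixing $\langle v_2\rangle$ plays the role of Lemmas \ref{zeroavg} and \ref{areaconvergence}, and your linearized fluctuation operator is conjugate (via $u=-\psi_0'\,v_2$) to the paper's $L_0-\Lambda$. The paper's potentials and its separate tracking of $\partial_s\phi,\partial_s\eta$ are what let the scheme avoid derivative loss and extend to non-shear base states; your route buys transparency in the channel geometry, and an IFT closure is legitimate provided you verify the same derivative-count structure (only $\partial^2\partial_s\phi$, never a full third derivative of $\phi$, may appear in the nonlinearity).

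The one step I would not accept as written is the coercivity of the fluctuation operator. Your quadratic form is $\int(\psi_0')^2|\nabla w|^2-\int\psi_0'\Theta_0'(\psi_0)w^2$ on mean-zero $w\in H^1_0$, and the Poincar\'e gap gives only $\int(\psi_0')^2|\nabla w|^2\ge \lambda_* U_0^2\int w^2$ with $\lambda_*$ the first Dirichlet eigenvalue on the mean-zero sector; this dominates the zeroth-order term only if $\|\psi_0'\Theta_0'(\psi_0)\|_{L^\infty}<\lambda_* U_0^2$, or if $\psi_0'\Theta_0'(\psi_0)\le 0$ (your parenthetical needs the sign of $\psi_0'$ as well as that of $\Theta_0'$). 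Neither condition is implied by the hypotheses of the theorem, which place no size restriction on $\Theta_0'$; you should either impose such a condition or elevate invertibility of the linearized operator to a hypothesis. For what it is worth, the paper's own verification of Hypothesis (\textbf{H1}$^\prime$) has the same soft spot: it asserts $\Lambda=v_0''/v_0$ so that Lemma \ref{kerlem} applies, but differentiating the base relation $\psi_0''=y_2\Theta_0'(\psi_0)+G_0'(\psi_0)$ in $y_2$ gives $\Lambda=(v_0''-\Theta_0'(\psi_0))/v_0$, leaving precisely the extra term your Poincar\'e argument cannot absorb in general. So your proposal is no weaker than the paper on this point, but neither argument closes it for arbitrary $\Theta_0$.
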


Note that, in light of Theorem \ref{liouBous}, if the base state $\psi_0$ has no stagnation points and $\Theta_0'\leq 0$, all smooth steady states are shears and so the assumption that
$\psi_0$ is a shear is automatic.

\vspace{2mm}
\begin{center}
\textbf{Three-dimensional axisymmetric Euler
}
\end{center}

Let $T_0\subset \R^3$ be a domain with smooth boundary. The three-dimensional steady Euler equations
(or, equivalently, the three-dimensional steady Magnetohydrostatic equations)
 read
\begin{alignat}{2}
\omega_0 \times u_0 &= \nabla P_0, &&\quad \text{ in } T_0, \label{steadymhd}\\
 \nabla \cdot u_0 &= 0, &&\quad \text{ in } T_0,\label{divB0}\\
 u_0\cdot \hat{n} &= 0, &&\quad \text{ on } \partial T_0,\label{Bbdy}
\end{alignat}
where $\omega_0= \nabla\times u_0$ denotes the vorticity and $P_0$ denotes
the pressure.

The issue of existence of solutions to \eqref{steadymhd}-\eqref{Bbdy} is of fundamental importance to the problem of magnetic confinement fusion. In particular, one strategy to achieve fusion is to drive a plasma contained in an axisymmetric toroidal domain (tokamak) towards an equilibrium configuration which is (ideally) stable and enjoys certain properties that make it suitable for confining particles which, to first approximation, travel along its magnetic field lines well inside the domain.  Once such a suitable steady state is identified, the control of the plasma to remain near this state is a very important and challenging engineering problem.  However, as Grad remarked in \cite{G67},
``\textit{Almost all stability analyses are predicated on the existence of an equilibrium
state that is then subject to perturbation. But a more primitive reason than instability
for lack of confinement is the absence of an appropriate equilibrium state." }
Grad goes on to write that there are exactly four known symmetries for which smooth toroidal
plasma equilibria with nested magnetic surfaces can exist. These are: two-dimensional,
axial, helical and reflection symmetries.
He asserts in \cite{G85} that ``\textit{no additional exceptions have arisen since 1967, when it
was conjectured that toroidal existence...of smooth solutions with simple nested surfaces
admits only these $\dots$ exceptions. $\dots$  The proper formulation of the nonexistence
statement is that, other than stated symmetric exceptions, there are no families
of solutions depending smoothly on a parameter.}" \
We formalize this statement as a rigidity property of solutions of three-dimensional Euler (Magnetohydrostatics):

\begin{conj}[H. Grad, \cite{G67,G85}]\label{GradsConj} Any non-isolated and non-vanishing (away from the ``magnetic axis") smooth unforced MHS equilibrium on a (topologically toroidal or cylindrical) domain $T\subset \mathbb{R}^3$ that has a pressure possessing nested level sets
which foliate $T$ has either plane-reflection, axial or helical symmetry.
\end{conj}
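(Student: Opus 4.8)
The conjecture sits at the meeting point of the two themes of this paper: the flexibility theorems above show that \emph{symmetric} equilibria are non-isolated, since they admit genuine deformations, while the rigidity theorems show that non-stagnant equilibria on symmetric domains are forced to be symmetric. Grad's conjecture is essentially the converse of the flexibility direction, so the plan is to \emph{extract a continuous symmetry from a one-parameter family of equilibria}, running the argument of Proposition \ref{propA} in reverse: there, a prescribed Killing field $\xi$ produces a kernel element $\L_\xi\psi$ of the linearized operator and triviality of the kernel forces $\L_\xi\psi=0$; here one is handed a kernel element, namely the infinitesimal deformation, and must manufacture from it a symmetry.

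First I would set up a three-dimensional Grad--Shafranov reduction. Dotting the equilibrium equation \eqref{steadymhd} with $u_0$ and with $\omega_0$ shows that both fields are tangent to every pressure level set, so the hypotheses that $\nabla P_0\neq 0$ away from the magnetic axis and that the level sets foliate $T$ by nested tori place us in the integrable Arnol'd picture: $u_0$ and $\omega_0$ are commuting vector fields generating a conditionally periodic flow on each flux torus. An equation-of-state argument parallel to Lemma \ref{eoslemma} should then express the torus invariants, the poloidal and toroidal fluxes and the field-line rotational transform, as functions of $P_0$ alone, reducing the system to a single quasilinear scalar equation for $P_0$ together with this flux-function data.

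Next I would encode the non-isolated hypothesis as a smooth family $P_\lambda$ of such equilibria reducing to $P_0$ at $\lambda=0$, and differentiate the reduced system in $\lambda$. The linearization about $P_0$ has the schematic form $\mathcal{L}_{P_0}\dot P=0$, with $\dot P:=\partial_\lambda P_\lambda|_{\lambda=0}$ and $\mathcal{L}_{P_0}$ an elliptic operator built from the equilibrium, in direct analogy with \eqref{p1}. One then wants to prove that a nontrivial $\dot P\in\ker\mathcal{L}_{P_0}$ forces a Killing field $\xi$ of the ambient metric with $\L_\xi P_0=0$; the classification of Killing fields compatible with a nested-torus foliation of a toroidal or cylindrical domain produces precisely the axial and helical generators, with plane-reflection appearing as the remaining discrete possibility.

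The hard part, and the reason the statement remains a conjecture rather than a theorem, is exactly this last implication: there is no reason a generic element of $\ker\mathcal{L}_{P_0}$ integrates to an isometry of space. The obstruction is sharpest at the level of the flux tori, where constructing or continuing solutions requires solving cohomological equations of the form $\xi\cdot\nabla f=g$ along the conditionally periodic flow; these carry small divisors governed by the rotational transform and are solvable robustly only under Diophantine non-resonance, which ties the problem to KAM phenomena and to the delicate persistence of the nested-torus foliation under deformation. I therefore expect a complete proof to require either a strong non-resonance hypothesis on the rotational transform across flux surfaces or a genuinely global argument, neither of which is reachable by the elliptic methods that settle the two-dimensional cases above.
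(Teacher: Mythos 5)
The statement you were asked to prove is Conjecture \ref{GradsConj}, which the paper records precisely as a \emph{conjecture} of Grad: it offers no proof, and indeed remarks that establishing rigidity for the full three-dimensional problem outside of symmetry ``seems to be out of reach of existing techniques.'' So there is no argument in the paper to compare yours against, and your proposal --- correctly and candidly --- does not close the argument either. To your credit, you have identified the right structural skeleton (the MHS reduction on nested pressure surfaces, linearization in the family parameter, and the attempted passage from a kernel element of the linearized operator to a Killing field, i.e.\ Proposition \ref{propA} run in reverse), and you have located the genuine obstruction in exactly the right place: nothing forces an infinitesimal deformation $\dot P$ in the kernel of the linearized operator to integrate to an isometry of $\mathbb{R}^3$, and the cohomological equations $\xi\cdot\nabla f = g$ along the field-line flow on each flux torus carry small divisors governed by the rotational transform. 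This is the step that separates the two-dimensional and symmetric settings, where the paper's elliptic and sliding methods apply, from the genuinely three-dimensional problem.

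Two further cautions on the parts you treat as routine. First, ``non-isolated'' is weaker than ``member of a smooth one-parameter family'': a sequence of equilibria accumulating at $u_0$ need not be differentiable in any parameter, so even producing the kernel element $\dot P$ requires a nontrivial compactness and regularity argument (the paper's own formalization speaks of families depending smoothly on a parameter precisely to sidestep this). Second, the proposed three-dimensional analogue of Lemma \ref{eoslemma} --- expressing the fluxes and rotational transform as single-valued functions of $P_0$ alone --- is itself delicate: on surfaces where the rotational transform is rational the field-line flow is not ergodic on the torus, and the relevant invariants need not descend to functions of the pressure. These resonant surfaces are exactly where current sheets and the breakdown of the nested foliation are expected, and they are a large part of why the conjecture remains open. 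In short: your diagnosis of the difficulty is accurate, but what you have written is a research program, not a proof, and the paper claims nothing more either.
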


By an \textit{isolated stationary state}, we mean that, in some suitably topology, there are no nearby steady states aside from those which correspond to a trivial rescaling or translation of the original.  It is possible that no such object exists. The qualifier is included to make precise Grad's assertion that the conjecture apply to solutions which appear in  continuous``families".

Complementary to Grad's conjecture, here we prove that solutions with symmetry can also be severely restricted to conform to a stronger form of symmetry. Specifically, we consider periodic-in-$z$ solutions in the (hollowed out) axisymmetric cylinder (see right half of Figure \ref{fig:torus})
   \begin{align}   \label{domaintor}
T_0 = D_0 \times \mathbb{T}, \qquad &D_0 = \{ (r,z)\in \ [1/2,1]\times \mathbb{T}\},
\end{align}
which are axisymmetric in the sense that
$u_0 = u_0(r, z)$.
We
remark that solutions with this symmetry on this domain are not suitable for the confinement
of a plasma in a tokamak and instead describe steady flow in a pipe.
To find solutions with this symmetry, we make the ansatz
\begin{equation}
 u_0 = \frac{1}{r} e_\theta \times \nabla \psi_0 + \frac{1}{r} C_0(\psi_0) e_\theta
 \label{ansatz}
\end{equation}
for a function $C_0:\mathbb{R}\to \mathbb{R}$ and $\psi_0 = \psi_0(r, z)$ is to be determined.
In fact, by the results in \cite{BKM19}, any sufficiently smooth solution to
\eqref{steadymhd}-\eqref{Bbdy} possessing symmetry in the $\theta$ direction
and $\curl( u_0 \times e_\theta) = 0$ and possessing a nowhere vanishing pressure gradient is necessarily of the form \eqref{ansatz}.
If we seek a solution with pressure of the form $P_0 = \Pi_0(\psi_0)$ for some profile function $\Pi_0: \mathbb{R}\to \mathbb{R}$, then \eqref{ansatz} satisfies
\eqref{steadymhd}-\eqref{Bbdy} provided $\psi_0$ satisfies
\begin{alignat}{2}
  \frac{\pa^2}{\pa r^2} \psi_0 + \frac{\pa^2}{\pa z^2} \psi_0 - \frac{1}{r}
  \frac{\pa}{\pa r}\psi_0 &= -r^2\Pi_0'(\psi_0) +C_0 C_0'(\psi_0),
  \qquad && \text{ in } D_0,
 \label{gradshafranov}\\
 \psi_0 &= {\rm (const.)},  \quad && \text{ on }  \pa D_0. \label{gradshafranovbc}
\end{alignat}
The equation \eqref{gradshafranov} is known in plasma physics as the Grad--Shafranov equation \cite{GR58,S66}.\footnote{In fact, \eqref{gradshafranov} has been derived long before by Hicks in 1898 \cite{H98}. Consequently, in the fluid dynamics community, the same equation is known as the Hicks equation and also as the Bragg--Hawthorne equation  \cite{BH50} and the Squire--Long equation  \cite{L53,S56} due to independent re-derivations.
One can derive versions of \eqref{gradshafranov} for other symmetries as well;
see \cite{BKM19} for a generalization of \eqref{gradshafranov} in this direction.}

\begin{figure}[h!]
  \includegraphics[height=1.7in]{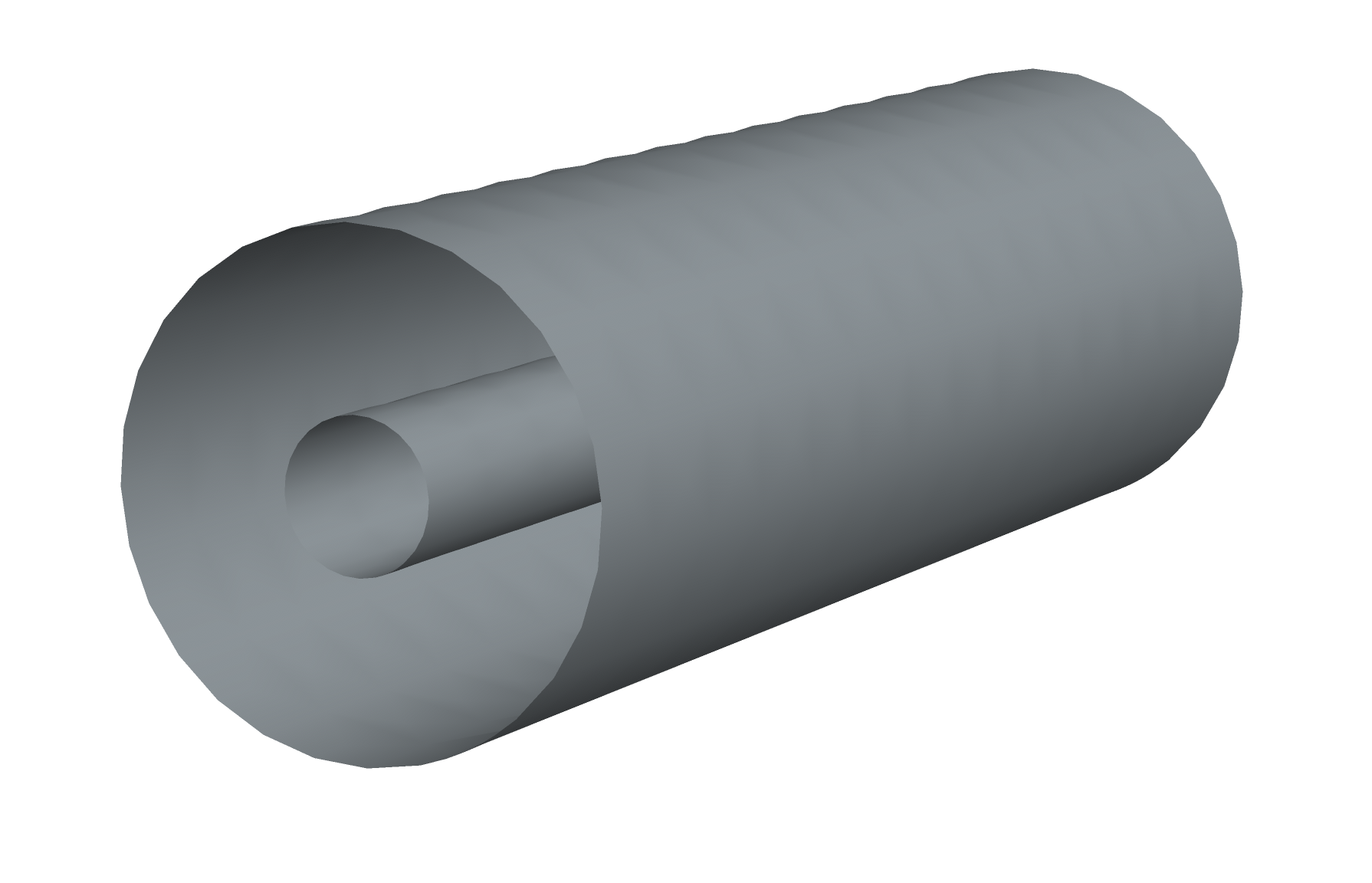}
  \includegraphics[height=1.7in]{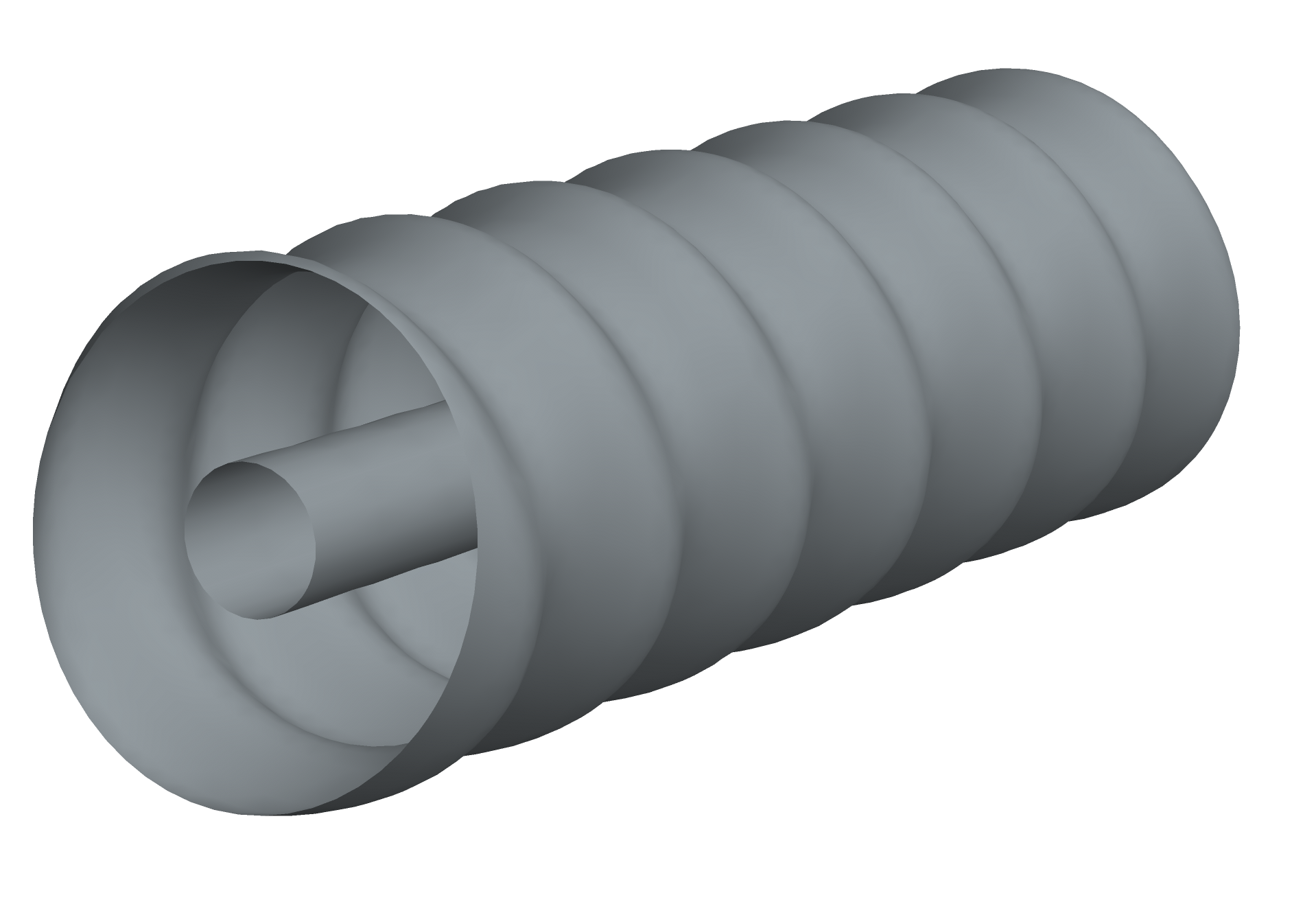}
  \caption{Axisymmetric deformation of the cylinder (corrugated pipe).}
  \label{fig:torus}
\end{figure}

Here we prove that all solutions whose pressure has a certain property must be radial.
\begin{theorem} [Rigidity of axisymmetric pipe flows] \label{liouMHD}
Let $D_0$ be given by \eqref{domaintor}. Suppose $\Pi_0,C_0: \mathbb{R}\to \mathbb{R}$ are Lipchitz functions and that $\psi_0: D_0\to \mathbb{R}$ is $C^2(D_0)$ solution of the Grad--Shafranov equation
 \eqref{gradshafranov}--\eqref{gradshafranovbc} with $\inf_{D_0} |\partial_r\psi_0|> 0$.  If furthermore
 $ \Pi_0$ satisfies
\be
\Pi_0'(\psi_0)\geq 0,
\label{condition1}
\ee
then $\psi_0$ is radial, i.e. $\psi_0(r,z)= \psi_0(r)$.
\end{theorem}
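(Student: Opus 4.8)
The goal is to show $w:=\partial_z\psi_0\equiv0$, for then $\psi_0$ depends only on $r$. Write $h:=\partial_r\psi_0$ and let $\mathcal{G}:=\partial_r^2+\partial_z^2-\frac1r\partial_r$ denote the Grad--Shafranov operator, so that \eqref{gradshafranov} reads $\mathcal{G}\psi_0=f(\psi_0,r)$ with $f(s,r):=-r^2\Pi_0'(s)+C_0(s)C_0'(s)$. Since $h$ is continuous and, by hypothesis, never zero on the connected domain $D_0$, it keeps a constant sign; I take $h>0$ (this is the orientation for which the sign condition \eqref{condition1} is the useful one). The boundary plays a decisive and cost-free role: $\psi_0$ is constant on each of the two components $\{r=\tfrac12\}$, $\{r=1\}$ of $\partial D_0$, so differentiating along them gives $w=0$ on $\partial D_0$, whereas $h\neq0$ there; hence the ratio
\[
 \sigma:=\frac{w}{h}=\frac{\partial_z\psi_0}{\partial_r\psi_0}
\]
is a well-defined function on $D_0$ vanishing on $\partial D_0$. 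Geometrically $\sigma$ is (minus) the slope $dr/dz$ of the level curves of $\psi_0$, so $\sigma\equiv0$ says exactly that the level sets are the circles $r=\mathrm{const}$.

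The heart of the matter is to find a good equation for $\sigma$. Formally differentiating $\mathcal{G}\psi_0=f(\psi_0,r)$ in $z$ gives $\mathcal{G}w=\partial_sf(\psi_0,r)\,w$, and differentiating in $r$ and using the commutator $\partial_r\mathcal{G}=\mathcal{G}\partial_r+\tfrac1{r^2}\partial_r$ gives $\mathcal{G}h=\partial_sf(\psi_0,r)\,h+\partial_rf(\psi_0,r)-\tfrac{h}{r^2}$, with the \emph{same} coefficient $\partial_sf$. Forming the Wronskian-type combination $h\,\mathcal{G}w-w\,\mathcal{G}h$ therefore cancels $\partial_sf$ --- and with it the second derivatives $\Pi_0''$, $C_0''$, which need not even exist --- leaving only $\partial_rf=-2r\Pi_0'$:
\[
 h\,\mathcal{G}w-w\,\mathcal{G}h=\frac{h\,w}{r^2}+2r\,\Pi_0'(\psi_0)\,w .
\]
Since $h\nabla w-w\nabla h=h^2\nabla\sigma$, the left-hand side is a divergence, and the identity becomes the linear, divergence-form equation
\[
 \nabla\cdot\!\Big(\frac{h^2}{r}\,\nabla\sigma\Big)=\Big(\frac{h^2}{r^3}+2\,\Pi_0'(\psi_0)\,h\Big)\sigma .
\]
I would establish this weak identity directly from \eqref{gradshafranov} by integration by parts (so that no third derivative of $\psi_0$ is ever taken); this is legitimate because $\psi_0\in C^2$ gives $h^2\nabla\sigma\in C^0$ and $\sigma\in H^1_0(D_0)$, and because only the bounded function $\Pi_0'(\psi_0)$ survives.

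It remains to conclude, and here \eqref{condition1} enters exactly once. Testing the divergence-form equation against $\sigma\in H^1_0(D_0)$ and integrating by parts yields
\[
 \int_{D_0}\frac{h^2}{r}\,|\nabla\sigma|^2\,\rmd r\,\rmd z+\int_{D_0}\Big(\frac{h^2}{r^3}+2\,\Pi_0'(\psi_0)\,h\Big)\sigma^2\,\rmd r\,\rmd z=0 .
\]
On $D_0$ we have $r\in[\tfrac12,1]>0$ and $h>0$, so $\Pi_0'(\psi_0)\ge0$ makes both integrands nonnegative; hence each integral vanishes, and since $h^2r^{-3}>0$ this forces $\sigma\equiv0$, i.e. $\partial_z\psi_0\equiv0$ and $\psi_0$ is radial. (Equivalently, when the data are smooth one may avoid the energy identity and simply invoke the maximum principle: $\sigma$ solves a uniformly elliptic equation whose zeroth--order coefficient $-(r^{-2}+2r\Pi_0'(\psi_0)h^{-1})$ is $\le0$, and $\sigma=0$ on $\partial D_0$ forces $\sigma\equiv0$.)

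The main obstacle is precisely the sign and regularity of the linearization: the naive equation $\mathcal{G}w=\partial_sf(\psi_0,r)\,w$ for $w=\partial_z\psi_0$ has a zeroth--order coefficient with no definite sign that is moreover undefined for merely Lipschitz $\Pi_0,C_0$. Dividing by the nowhere-vanishing $h=\partial_r\psi_0$ --- which is available exactly because of the non-stagnation hypothesis $\inf_{D_0}|\partial_r\psi_0|>0$ --- removes this coefficient entirely and replaces it by $h^2r^{-3}+2\Pi_0'(\psi_0)h$, a term that is both meaningful at the given regularity and sign-definite under \eqref{condition1}. Verifying the divergence-form equation weakly at $C^2$ regularity, and checking the constant sign/orientation of $h$ relative to \eqref{condition1}, are the two points requiring care.
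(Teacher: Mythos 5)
Your argument is correct in substance but takes a genuinely different route from the paper's. The paper disposes of Theorem \ref{liouMHD} in one line by feeding the Grad--Shafranov equation into the sliding-method Liouville Theorem \ref{hn} with $f=-\tfrac1r$, $g(r,\psi)=r^2\Pi_0'(\psi)$ and $h(\psi)=C_0C_0'(\psi)$: the hypothesis \eqref{condition1} is exactly $g_y\ge 0$, and the translation/comparison argument of Lemma \ref{wtaulem} forces $z$-independence. You instead form the slope $\sigma=\partial_z\psi_0/\partial_r\psi_0\in H^1_0(D_0)$, note that the Wronskian combination $h\,\mathcal{G}w-w\,\mathcal{G}h$ cancels the coefficient $\partial_s f$ (which involves $\Pi_0''$, $C_0''$), and arrive at the divergence-form identity $\nabla\cdot(r^{-1}h^2\nabla\sigma)=(r^{-3}h^2+2\Pi_0'(\psi_0)h)\sigma$, which you kill by an energy pairing. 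I checked the computation, including the commutator $[\partial_r,-r^{-1}\partial_r]=r^{-2}\partial_r$ that produces the favorably signed $r^{-3}h^2\sigma^2$ term, and it is right. Your route buys a quantitative coercive quadratic form in place of a soft comparison argument, avoids the narrow-domain maximum principle and the compactness machinery of Lemma \ref{wtaulem}, and isolates the structural quantity that actually matters, namely $\Pi_0'(\psi_0)\,\partial_r\psi_0=\partial_r\bigl(\Pi_0(\psi_0)\bigr)$; the paper's route is shorter only because Theorem \ref{hn} is proved elsewhere.

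Two points require more care than you give them. First, ``I take $h>0$'' is not a free normalization: replacing $\psi_0$ by $-\psi_0$ replaces $\Pi_0$ by $s\mapsto\Pi_0(-s)$ and so reverses the sign of $\Pi_0'$, so the case $h<0$ with $\Pi_0'\ge 0$ is genuinely not covered --- your energy identity then carries the wrong-signed term $2\int\Pi_0'(\psi_0)h\,\sigma^2\le 0$ and the conclusion does not follow. The hypothesis your argument really uses is $\Pi_0'(\psi_0)\,\partial_r\psi_0\ge 0$ (pressure nondecreasing in $r$), and you should say so rather than present $h>0$ as a choice. To be fair, the paper's proof inherits the identical defect: the normalization step borrowed from the proof of Theorem \ref{liouE} replaces the nonlinearity by $\pm F_0(\pm\psi_0+c_1)$, which for the MHS case turns $\Pi_0'\ge 0$ into $\Pi_0'\le 0$ and destroys the hypothesis $g_y\ge 0$ of Theorem \ref{hn}; so you are no worse off, but you should not claim the reduction. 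Second, the weak derivation of the divergence-form identity is asserted, not carried out, and it is not a one-liner: the intermediate objects $\mathcal{G}w$, $\mathcal{G}h$, $\partial_sf$ individually do not exist at the stated regularity. The clean repair is to note that once $\Pi_0'$ and $C_0C_0'$ are Lipschitz (which Theorem \ref{hn} also requires, since it needs $g$ Lipschitz in $\psi$), $\Delta\psi_0=r^{-1}\partial_r\psi_0+f(\psi_0,r)$ is Lipschitz, hence $w,h\in W^{2,p}$ for all $p$ and the differentiated equations hold pointwise a.e.\ with $\partial_sf\in L^\infty$; the cancellation and the integration by parts are then legitimate. At the literal regularity of the statement (only $\Pi_0,C_0$ Lipschitz) additional mollification of the equation would be needed, for your proof and for the paper's alike.
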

Physically, Theorem \ref{liouMHD} says that in order to support some non-trivial structure in pipe flow, the pressure cannot satisfy \eqref{condition1}.  It is conceivable that this has some bearing for identifying good flow configurations from the point of view of drag reduction.

 Liouville theorems constraining axisymmetric solutions of three-dimensional fluid equations have appeared previously in the work of Shvydkoy for Euler on $\mathbb{R}^3$ (\S 5 of \cite{RS18}) and by Koch-Nadirashvili-Seregin-Sver\'{a}k  \cite{KNSS} for ancient solutions of Navier-Stokes on $\mathbb{R}^3$.  Establishing similar rigidity results for the full three-dimensional problem outside of symmetry -- which is necessary to address Grad's conjecture -- seems to be out of reach of existing techniques.

We prove also the complementary flexibility result for periodic-in-$z$ solutions.  Specifically, we construct solutions of
\begin{alignat}{2}
  \frac{\pa^2}{\pa r^2} \psi + \frac{\pa^2}{\pa z^2} \psi - \frac{1}{r}
  \frac{\pa}{\pa r}\psi &= -r^2\Pi'(\psi) +C C'(\psi),
  \qquad && \text{ in } D
 \label{gradshafranov2}\\
 \psi &= {\rm (const.)},  \quad && \text{ on }  \pa D. \label{gradshafranovbc2}
\end{alignat}
where the domain occupied by the fluid is given by
   \begin{align}   \label{newcyl}
T = D \times \mathbb{T}, \qquad & D = \{ (r,z)\in \ [b_0(z), b_1(z)]\times \mathbb{T}\}.
\end{align}
  See the left half of Figure \ref{fig:torus} for a depiction of a possible domain.

\begin{theorem}[Flexibility of axisymmetric pipe flows]
  \label{3dMHDthm}
 Let $D_0$ be given by \eqref{domaintor} and  $D$ defined by \eqref{newcyl}.
 Fix $k \geq 3, \alpha \in (0,1)$.
   Suppose $\psi_0 \in C^{k,\alpha}(D_0)$ is a solution to the axisymmetric
   Grad-Shafranov equation \eqref{gradshafranov}-\eqref{gradshafranovbc}
   for some $\Pi_0, C_0 \in C^{k-1,\alpha}(\R)$,
   having no stagnation points in the sense that
    $U_0 := \inf|\nabla \psi_0 | > 0$
     in $D_0$. Suppose that additionally $\Pi_0, C_0$ and $\psi_0$ satisfy
    \be
\Pi_0'(\psi_0) >  \tfrac{1}{r^2} (C_0C_0')'(\psi_0).
  \ee
Then there are constants
  $\ve_1, \ve_2, \ve_3$ depending only on $U_0$ and
  $\|\psi_0\|_{C^{k,\alpha}(D_0)}, \|\Pi_0\|_{C^{k-1,\alpha}},$ and $\|C_0\|_{C^{k-1,\alpha}}$ such that if $b_0, b_1:\R \to \R$,  $\Pi:\R \to \R$  and $\rho:D_0\to \mathbb{R}$  with $\int_{D_0} \rho = {\rm Vol}(D)$ satisfy
\begin{align}
\|b_0\|_{C^{k,\alpha}(\R)}
  + \|b_1\|_{C^{k,\alpha}(\R)} &\leq \ve_1,
\\
  \|1-\rho\|_{C^{k,\alpha}(D_0)}&\leq \ve_2,\\
    \|\Pi_0'-\Pi'\|_{C^{k-2,\alpha}(\R)}&\leq \ve_3,
  \end{align}
  there is a diffeomorphism $\gamma: D_0 \to D$ and a function
  $C \in C^{k-1,\alpha}(\R)$ so that $\psi = \psi_0\circ \gamma^{-1}$ satisfies \eqref{gradshafranov2}
  in $D$ with pressure $\Pi$ and swirl $C$.
  In particular,
\begin{equation}
u =  \frac{1}{r}e_\theta \times \nabla \psi +  \frac{1}{r} C(\psi) e_\theta
\end{equation}
satisfies the Euler equation \eqref{steadymhd}--\eqref{Bbdy} with pressure $P = \Pi(\psi)$
in the domain $T= D\times \mathbb{T}$.
\end{theorem}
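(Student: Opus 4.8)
The plan is to realize Theorem \ref{3dMHDthm} as an instance of the general deformation scheme of \eqref{maineqn}, with $\mathcal{L}$ the Grad--Shafranov operator and the prescribed Jacobian $\rho$ used to remove the gauge freedom in the map $\gamma$. First I would record that the operator in \eqref{gradshafranov2} has the divergence form $\mathcal{L}\psi = r\,\nabla\cdot(r^{-1}\nabla\psi) = \partial_r^2\psi + \partial_z^2\psi - r^{-1}\partial_r\psi$, so that it is symmetric and coercive in the weighted space $L^2(r^{-1}\,\mathrm{d}r\,\mathrm{d}z)$; because the pipe is hollowed out, $r\in[1/2,1]$ stays bounded away from $0$ and $\mathcal{L}$ is uniformly elliptic with smooth, bounded coefficients. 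Setting $\psi=\psi_0\circ\gamma^{-1}$ and pulling \eqref{gradshafranov2} back to $D_0$ as in \eqref{maineqn} gives a scalar equation $\mathcal{L}_\gamma\psi_0 = -\hat r^{\,2}\,\Pi'(\psi_0) + C C'(\psi_0)$, where $\hat r$ denotes the radial component of $\gamma$; together with the constraint $\det\nabla\gamma=\rho$ (whose solvability requires precisely the stated normalization $\int_{D_0}\rho={\rm Vol}(D)$) and the boundary matching $\gamma(\partial D_0)=\partial D$, this is a determined system for the two components of $\gamma$.

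The key structural observation, which makes the swirl $C$ an \emph{output} rather than an input, is that $\psi=\psi_0\circ\gamma^{-1}$ solves \eqref{gradshafranov2} for some $C$ if and only if the quantity $\mathcal{L}\psi + r^2\Pi'(\psi)$ is constant on the streamlines $\{\psi=\mathrm{const}\}$, i.e. equals a function $h$ of $\psi$ alone; the swirl is then recovered by integrating $\tfrac12 (C^2)'(\tau)=h(\tau)$ over $\psi(D)$. Since $\psi_0$ has no stagnation points and stays close to the radial base state, the level sets of $\psi$ remain graphs of the pipe foliation, $h$ lies in $C^{k-1,\alpha}$, and $C^2=C_0^2+2\int(h-h_0)$ stays positive for data near the base, so $C\in C^{k-1,\alpha}(\R)$ is well defined. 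Thus the genuine unknown is the displacement of the foliation, and I would encode $\gamma$ through a single scalar normal displacement $a$ (how far each level set $\{\psi_0=c\}$ is pushed in the direction of $\nabla\psi_0$) together with a tangential reparametrization pinned down by $\det\nabla\gamma=\rho$.

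I would then solve by the implicit function theorem in Hölder spaces about the exact solution $(\gamma,C,\Pi,\rho)=(\mathrm{id},C_0,\Pi_0,1)$ corresponding to $D=D_0$. Linearizing the reduced equation in $a$ produces a second-order operator $\mathcal{L}'-q$, whose zeroth-order coefficient $-q$ arises from varying the explicitly $r$-dependent forcing $-\hat r^{\,2}\Pi'(\psi_0)$ in the radial displacement and from the variation of the swirl term, and is a negative multiple of $r^2\Pi_0'(\psi_0) - (C_0C_0')'(\psi_0)$. The hypothesis $\Pi_0'(\psi_0) > r^{-2}(C_0C_0')'(\psi_0)$ is exactly the assertion that this combination is positive, i.e. that $-q<0$ is stabilizing; together with the uniform ellipticity of $\mathcal{L}$ and the no-stagnation bound $U_0>0$ (which makes the foliation coordinates regular, playing the role of Hypothesis \textbf{(H2)}), this makes the linearized operator uniformly elliptic with trivial kernel, hence invertible between the relevant $C^{k,\alpha}$ spaces by Schauder theory and the maximum principle. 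The implicit function theorem (equivalently, a contraction built on the Schauder estimate) then yields $\gamma$, and hence $\psi$ and $C$, for $b_0,b_1,\rho-1,\Pi-\Pi_0$ small; one checks a posteriori that $\det\nabla\gamma=\rho$ is near $1$ so $\gamma$ is a diffeomorphism and $\psi\in C^{k,\alpha}(D)$.

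The main obstacle, relative to the flat two-dimensional Euler case, is the genuinely inhomogeneous $r$-dependence: both the weight $r^{-1}\partial_r$ in $\mathcal{L}$ and the forcing $-r^2\Pi'$ become $\hat r$-dependent after pullback, so the equation for $\gamma$ is quasilinear and strongly coupled to the radial displacement rather than a perturbation of a constant-coefficient problem. The crux is therefore to verify that this $r$-dependence is exactly what supplies the favorable zeroth-order term $q\propto (C_0C_0')'(\psi_0)-r^2\Pi_0'(\psi_0)$, so that the sign hypothesis survives the pullback and keeps the linearization invertible uniformly in the small data; a secondary technical point is ensuring the reconstructed swirl satisfies $C^2\ge 0$ with full $C^{k-1,\alpha}$ regularity, which is where closeness to the base profile $C_0$ is used.
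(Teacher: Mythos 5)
Your proposal takes essentially the same route as the paper. There, Theorem \ref{3dMHDthm} is obtained by instantiating the general deformation result, Theorem \ref{thm1}, with $L_0=\partial_r^2+\partial_z^2-\tfrac{1}{r}\partial_r$, input nonlinearity $G_0(r,\psi)=-r^2\Pi_0'(\psi)$ and output nonlinearity $F_0=C_0C_0'$; Hypothesis \textbf{(H1$'$)} is verified from the sign hypothesis together with positivity of $-L_0$ on the hollowed-out pipe (where $r$ is bounded away from zero), \textbf{(H2)} follows from the absence of stagnation points, and the swirl is then read off from the deformed profile $F$ by integrating $\tfrac12(C^2)'=F$, exactly as you describe. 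Your ``normal displacement plus tangential reparametrization'' encodes the same data as the paper's splitting $\gamma=\mathrm{id}+\nabla^\perp\phi+\nabla\eta$ (with $\partial_s\phi$ playing the role of the normal displacement and $\eta$ absorbing the Jacobian constraint through a Neumann problem), and your implicit-function-theorem framing replaces the paper's explicit iteration.

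One step is under-justified. Because $F=\tfrac12(C^2)'$ must be chosen \emph{simultaneously} with the displacement so that the right-hand side of the equation for $\partial_s\phi$ has zero average on every streamline, the relevant linear solvability question is not whether $L_0-\Lambda$ has trivial kernel, but whether the streamline-averaged compression $u\mapsto \mathbb{P}_{\psi_0}(L_0-\Lambda)^{-1}_{\rm hbc}[u\circ\psi_0]$ is surjective onto functions of $\psi_0$ vanishing at the boundary values. This is the paper's Hypothesis \textbf{(H3)}, established in Lemma \ref{h1h2h3lem}, and the paper explicitly remarks that invertibility of $L_0-\Lambda$ alone cannot be expected to imply it: positive definiteness of the quadratic form is the essential ingredient. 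Your sign hypothesis does supply that definiteness, so the argument can be completed with the tools you have in hand, but ``uniformly elliptic with trivial kernel, hence invertible by Schauder theory and the maximum principle'' is not the mechanism that closes this step. Separately, your insistence on checking $C^2=C_0^2+2\int(F-F_0)\ge 0$ before extracting a real-valued swirl is a legitimate point that the paper passes over in silence; it is guaranteed only when $C_0^2$ is bounded below by a positive constant on the range of $\psi_0$ (or if one is content to produce $CC'$ rather than $C$ itself).
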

We remark, for the deforming scheme, that it is not necessary to cut out the inner part of the cylinder.
Theorem \ref{3dMHDthm} applies provided that \textbf{(H2)} on non-degeneracy is satisfied by $\psi_0$.

\section{Rigidity: Liouville Theorems}

To establish the claimed Liouville theorems, we first show that functions which satisfy steady transport by a velocity $u_0= \nabla^\perp\psi_0$ \textit{with no stagnation points} can be constructed from the streamfunction $\psi_0$ via a `nice' equation of state. This is the content of  Lemma \ref{eoslemma} below.

\begin{lemma}\label{eoslemma}
Fix $k\geq 3$ and let $D_0$ be diffeomorphic to the annulus and $\psi_0: D_0\to \mathbb{R}$ satisfy
\begin{itemize}
\item $\psi_0\in C^k(D_0)$,
\item  $\psi_0|_{ \pa D_0^{\rm bot}} = c_0 $ and $\psi_0|_{ \pa D_0^{\rm top}} = c_1 $  for constants $c_0\neq c_1$.
\item $|\nabla \psi_0|\neq 0$ in $D_0$.
\end{itemize}
Suppose that $\theta \in C^{k-2}(D_0)$ satisfies
\be
\nabla^\perp \psi_0 \cdot \nabla \theta = 0, \quad \text{in} \quad D_0.
\ee
Then, there exists a $(k-2)$--times continuously differentiable function $\Theta:\mathbb{R}\to \mathbb{R}$ such that
\be
\theta(x,y)= \Theta(\psi_0(x,y)),  \quad \text{in} \quad D_0.
\ee
\end{lemma}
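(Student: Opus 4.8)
The plan is to exploit the fact that the equation $\nabla^\perp\psi_0\cdot\nabla\theta=0$ says precisely that $\theta$ is constant along the integral curves of the vector field $\nabla^\perp\psi_0$, which (where $\nabla\psi_0\neq0$) coincide with the connected components of the level sets of $\psi_0$. Indeed, $\nabla^\perp\psi_0$ is everywhere tangent to the level sets of $\psi_0$ since $\nabla^\perp\psi_0\cdot\nabla\psi_0=0$, and the hypothesis forces the directional derivative of $\theta$ along $\nabla^\perp\psi_0$ to vanish. Thus $\theta$ is \emph{locally} a function of $\psi_0$, and the whole content of the lemma is to show that this local dependence is (i) globally single-valued and (ii) of class $C^{k-2}$ as a function of the single real variable $\psi_0$.

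For single-valuedness I would first pin down the topology of the level sets. Since $\psi_0$ has no critical points in $D_0$ and equals the distinct constants $c_0,c_1$ on the two boundary circles, the maximum and minimum of $\psi_0$ over the compact manifold $D_0$ cannot be attained in the interior, so $\operatorname{im}(\psi_0)=[c_0,c_1]$ (taking $c_0<c_1$ without loss of generality) and every $c\in(c_0,c_1)$ is a regular value. By the regular value theorem each $\{\psi_0=c\}$ is a compact embedded $1$-manifold disjoint from $\partial D_0$, hence a finite disjoint union of circles. I claim there is exactly one such circle: no component can bound a disk in $D_0$ (that disk would carry an interior extremum of $\psi_0$, hence a critical point), so every component is isotopic to the core of the annulus; but two disjoint core-isotopic circles would cobound a sub-annulus on whose two boundary curves $\psi_0\equiv c$, again forcing an interior critical point. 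Therefore each level set is a single connected curve, and we may unambiguously define $\Theta(c):=\theta|_{\{\psi_0=c\}}$ for $c\in[c_0,c_1]$.

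The regularity of $\Theta$ I would obtain by straightening $\psi_0$ near each point. Given $p\in D_0$ with, say, $\partial_2\psi_0(p)\neq0$, the map $\Phi(\yx,\yy)=(\yx,\psi_0(\yx,\yy))$ is a $C^k$ diffeomorphism of a neighborhood of $p$ (its inverse is $C^k$ by the inverse function theorem), and in the coordinates $(\sigma,\tau)=\Phi(\yx,\yy)$ the level sets of $\psi_0$ are the lines $\{\tau=\text{const}\}$. In these coordinates $\tilde\theta:=\theta\circ\Phi^{-1}\in C^{k-2}$ is independent of $\sigma$, so $\tilde\theta(\sigma,\tau)=\Theta(\tau)$, and restricting to a fixed $\sigma$ exhibits $\Theta$ as a $C^{k-2}$ function of $\tau$ on the open interval of values assumed near $p$. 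These local representations agree on overlaps by the single-valuedness established above, and as $p$ ranges over $D_0$ the intervals cover $[c_0,c_1]$ (the non-vanishing of $\nabla\psi_0$ up to the boundary keeps the straightening valid there), so $\Theta\in C^{k-2}([c_0,c_1])$. Finally I would extend $\Theta$ to a function in $C^{k-2}(\R)$ by a standard Whitney/Seeley extension; since $\psi_0$ only takes values in $[c_0,c_1]$ the identity $\theta=\Theta(\psi_0)$ on $D_0$ is unaffected by the choice of extension.

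The main obstacle is the global single-valuedness, i.e.\ the topological step showing each level set is a single connected loop; this is where the annular topology and the absence of stagnation points are essential, and it is exactly what would fail on a domain with more complicated topology or with interior critical points. The regularity bookkeeping via the straightening map is comparatively routine, the only care needed being the loss of two derivatives ($\theta\in C^{k-2}$ composed with the $C^k$ straightening) and the patching of the local definitions, both of which are controlled by the connectedness of the level sets.
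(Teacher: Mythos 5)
Your proof is correct, but it takes a genuinely different route from the one in the paper. The paper (following Lemma 2.4 of \cite{HN19}) argues dynamically: it takes the integral curve $\sigma_q$ of $\nabla\psi_0$ running from one boundary component to the other as a global transversal, observes that $g(t)=\psi_0(\sigma_q(t))$ is strictly increasing since $g'=|\nabla\psi_0|^2>0$, defines $\Theta=\theta\circ\sigma_q\circ g^{-1}$ explicitly along this transversal, and then connects an arbitrary point $p$ to the transversal by the periodic orbit $\xi_p$ of $\nabla^\perp\psi_0$, importing from \cite{HN19} the facts that these orbits are periodic and wind once around the annulus. You instead establish single-valuedness \emph{statically}, by classifying the regular level sets: no component can bound a disk and no two essential components can coexist without producing an interior critical point, so each level set is one essential circle on which $\theta$ is constant; regularity then comes from the flow-box coordinates $(\yx,\psi_0)$ rather than from the parametrized transversal. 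Each approach has its advantages: the paper's construction is explicit and is the same gradient-flow parametrization that reappears in the action-angle coordinates of the appendix, while yours is self-contained (it does not need the periodicity or winding of the orbits as external inputs) and handles the $C^{k-2}$ bookkeeping more transparently. One small point to tie off in your version: your connectedness argument is stated for $c\in(c_0,c_1)$, and to define $\Theta$ at the endpoints you should also note that $\{\psi_0=c_0\}$ and $\{\psi_0=c_1\}$ coincide with the two boundary circles (equivalently, $\min\{c_0,c_1\}<\psi_0<\max\{c_0,c_1\}$ in the interior, which is the content of Lemma \ref{eoslemma2}); this follows from the same interior-extremum argument you already use, applied to the sub-annulus cobounded by a putative interior component of $\{\psi_0=c_0\}$ and $\partial D_0^{\rm bot}$.
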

\begin{proof}[Proof of Lemma \ref{eoslemma}]
Our   Lemma \ref{eoslemma} essentially appears as Lemma 2.4 of \cite{HN19} in the case when $D_0$ is the channel. We summarize
  the argument here for the sake of completeness. Given $p \in D_0$ and let $\xi_p = \xi_p(t)$ denote the
integral curve of $\nabla^\perp \psi$ starting at $p$ at ``time" $t=0$, namely
  \begin{equation}
   \frac{\rmd}{\rmd t} \xi_p(t) = \nabla^{\perp}\psi_0(\xi_p(t)), \qquad
   \xi_p(0) = p, \qquad t\in \mathbb{R}.
   \label{}
  \end{equation}
  By Lemma 2.2 of \cite{HN19}, $\xi_p(t)$ is uniquely defined for all $t\in \mathbb{R}$ and is periodic in $t$ and moreover
  the curve $\xi_p(\R)$ passes through each $x\in [0,2\pi)$. Identifying the periodic channel with the annulus, this means that the curve  $\xi_p(\R)$ surrounds the inner disc.
  Given $q \in D_0$ we also
  let $\sigma_q$ denote the integral curve of $\nabla \psi$,
  \begin{equation}
   \frac{\rmd}{\rmd t} \sigma_q(t) = \nabla\psi_0(\sigma_q(t)), \qquad
   \sigma_q(0) = q, \qquad t\in \mathbb{R}.
   \label{}
  \end{equation}
  We now fix any point $q = (q_1, 0)$ at the bottom of the channel $\{y=0\}$.
  As a consequence of the fact that the vector field $\nabla\psi_0$ points normal to the boundary, it is shown in \cite{HN19} that there is a $t_q < \infty$ so that
  $\sigma_q(t)$ lies at the top of the channel, $\sigma_q(t_q) = (q_2, 1)$.
  Writing $g(t) = \psi(\sigma_q(t))$, we have $g'(t) = |\nabla\psi_0(\sigma_q(t))|^2
  > 0$ so it follows that $g$ is invertible with
  $C^{k-2}$ inverse. We define $\Theta$ by
  \begin{equation}
   \Theta(\tau) = \theta(\sigma_q(g^{-1}(\tau))).
   \label{}
  \end{equation}
  Then $\Theta$ is $C^{k-2}$ and $\Theta(\psi_0(\sigma_q(s))) = \theta(\sigma_q(s))$ for any $s$.  Finally, fix now  any point $p\in D_0$. For large enough $t$, there is an $s$ so that $\xi_p(t) =
  \sigma_q(s)$. Since $\nabla^\perp\psi_0\cdot \nabla \theta = 0$, we have
  $\theta(p) = \theta(\xi_p(t)) = \theta(\sigma_q(s))$. This completes the proof since then we have
  \begin{equation}
   \theta(p) = \theta(\sigma_q(s)) = \Theta(\psi_0(\sigma_q(s))) = \Theta(\psi_0(p)).
   \label{}
  \end{equation}
\end{proof}
We will need the following result which ensures that the stream function takes different values at the top and bottom, and ranges between these values in the interior:
\begin{lemma} [Lemma 2.6 of \cite{HN17}, Lemma 2.1 of \cite{HN19}] \label{eoslemma2}
Let $D_0$ be diffeomorphic to the annulus  and let  $\psi_0\in C^3(D_0)$  with  $|\nabla \psi_0|\neq 0$ on $D_0$ satisfy
\be
\psi_0|_{ \pa D_0^{\rm bot}} = c_0, \qquad\psi_0|_{ \pa D_0^{\rm top}} = c_1,
\ee
for some $c_0, c_1\in \mathbb{R}$.  Then $c_0\neq c_1$ and
\be
\min\{c_0,c_1\}< \psi_0  <\max\{c_0,c_1\}, \quad \text{on} \quad D_0\setminus \partial D_0.
\ee
\end{lemma}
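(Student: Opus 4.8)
The plan is to read Lemma \ref{eoslemma2} as a maximum-principle statement made trivial by the nonvanishing of $\nabla\psi_0$. The key structural input is that the boundary is the disjoint union of two level sets, $\partial D_0^{\rm bot}=\{\psi_0=c_0\}$ and $\partial D_0^{\rm top}=\{\psi_0=c_1\}$ (this is where the hypothesis that $D_0$ is diffeomorphic to the annulus enters, guaranteeing exactly two constant-value boundary components). Consequently the only values $\psi_0$ can take on $\partial D_0$ are $c_0$ and $c_1$, and the whole proof reduces to controlling where the extrema of $\psi_0$ are attained.

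First I would note that, since $D_0$ is compact and $\psi_0$ is continuous, $\psi_0$ attains a global maximum $M$ and a global minimum $m$ on $\overline{D_0}$. The central step is to show both are attained only on $\partial D_0$: if $\psi_0$ attained its maximum (resp.\ minimum) at an interior point $p\in D_0\setminus\partial D_0$, then $p$ would be an interior local extremum of a differentiable function, forcing $\nabla\psi_0(p)=0$ and contradicting $|\nabla\psi_0|\neq 0$ on $D_0$. Hence $M$ and $m$ are boundary values, and since $\psi_0\in\{c_0,c_1\}$ on $\partial D_0$ we get $M=\max\{c_0,c_1\}$ and $m=\min\{c_0,c_1\}$.

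Next I would settle $c_0\neq c_1$. If instead $c_0=c_1=:c$, the previous step forces $M=m=c$, hence $\psi_0\equiv c$ on $D_0$ and $\nabla\psi_0\equiv 0$, again contradicting the nonvanishing gradient; therefore $c_0\neq c_1$ and $m<M$. The strict interior bounds then follow from the very same no-interior-extremum observation: for any $x\in D_0\setminus\partial D_0$ one cannot have $\psi_0(x)=M$ or $\psi_0(x)=m$, since such an $x$ would be an interior global extremum and thus a critical point; combined with $m\le\psi_0\le M$ this gives $m<\psi_0(x)<M$, i.e.\ $\min\{c_0,c_1\}<\psi_0<\max\{c_0,c_1\}$ on $D_0\setminus\partial D_0$, as claimed.

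I do not anticipate a genuine obstacle here; the argument is essentially routine. The only points deserving care are (i) that the boundary decomposes into precisely the two constant-value components, so that the boundary extrema are exactly $c_0$ and $c_1$, and (ii) that $|\nabla\psi_0|\neq 0$ is assumed up to and including $\partial D_0$, so no interior point can host an extremum. Worth emphasizing is that no Hopf-type boundary refinement is needed, since the strict interior inequalities already come directly from the absence of interior critical points, and that the stated $C^3$ regularity is far stronger than required: mere $C^1$ with nonvanishing gradient suffices for this range statement.
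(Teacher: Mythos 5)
Your argument is correct and complete. Note first that the paper does not actually reproduce a proof of this lemma: it cites Lemma 2.6 of \cite{HN17} and Lemma 2.1 of \cite{HN19} and remarks that an inspection of those proofs shows the statement extends from the periodic channel to general annular domains. The arguments in those references (and the related machinery the paper does use, in the proof of Lemma \ref{eoslemma}) run through the integral curves of $\nabla\psi_0$: one shows that the gradient-flow trajectory $\dot\sigma = \nabla\psi_0(\sigma)$ launched from one boundary component reaches the other in finite time, that $\psi_0$ is strictly increasing along it since $\frac{\rmd}{\rmd t}\psi_0(\sigma(t)) = |\nabla\psi_0(\sigma(t))|^2 > 0$, and that every interior point lies on such a trajectory; this yields $c_0\neq c_1$ and the strict pinching simultaneously. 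Your route is genuinely different and more elementary: you bypass the ODE analysis entirely and observe that nonvanishing of $\nabla\psi_0$ forbids interior local extrema, so the global extrema of $\psi_0$ on the compact set $D_0$ live on $\partial D_0$, where the only available values are $c_0$ and $c_1$; equality $c_0=c_1$ would force $\psi_0$ constant, and strictness in the interior is again the no-interior-critical-point observation. What the trajectory approach buys — and the reason Hamel--Nadirashvili set it up that way — is the additional structural information (a foliation by streamlines, a transversal curve from bottom to top, finite travel times) that is needed for the equation-of-state construction in Lemma \ref{eoslemma} and for Theorem \ref{hn}; for the bare range statement of Lemma \ref{eoslemma2}, your critical-point argument suffices and, as you note, needs only $C^1$ regularity.
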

The proof of this result can be found in the cited references. There, \ref{eoslemma2} is established when $D_0$ is periodic channel  \eqref{channel}, but an inspection of the proof shows that the result holds more generally.

Finally, we prove the corresponding Liouville theorem which modestly generalizes
Theorem 1.6 of \cite{HN17} to accommodate the additional terms arising in the settings of the Boussinesq and axisymmetric Euler equations.
\begin{theorem}[Liouville Theorem]
  \label{hn}
 Let ${D_0} = \mathbb{T}\times[1/2,1]$ and let
  $f = f(y), g = g(y,\psi), h = h(\psi)$ be Lipschitz functions.
  Let $\psi\in C^2(D_0)$ be a solution to
 \begin{equation}
  \Delta \psi + f(y) \pa_y \psi + g(y, \psi) + h(\psi) = 0, \quad \text{ in } D_0,
  \label{}
 \end{equation}
where $\psi$ is periodic in $x\in \mathbb{T}$ with boundary conditions
 \begin{align}
  \psi(x, 1/2) &= 0,\qquad
  \psi(x, 1) = c > 0.
  \label{}
 \end{align}
Suppose that one of the following conditions holds
\begin{itemize}
\item  $g_y, f_y \geq 0$, and $0< \psi < c$ in ${D_0},$
\item   $g_y, f_y \geq 0$, and  $\psi_y \geq 0$ on ${D_0}.$
\end{itemize}
Then
 $\psi$ is independent of $x$, namely $\psi := {\psi}(y)$.
\end{theorem}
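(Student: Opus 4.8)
The plan is to show that $v:=\partial_x\psi$ vanishes identically, which is precisely the statement that $\psi=\psi(y)$. Because the coefficients $f(y)$, $g(y,\psi)$ and $h(\psi)$ have no explicit dependence on $x$, differentiating the equation in $x$ shows that $v$ solves the linearized problem
\[
L v:=\Delta v+f(y)\,\partial_y v+\big(g_\psi(y,\psi)+h'(\psi)\big)v=0\quad\text{in }D_0,
\]
with $v=0$ on the top and bottom (where $\psi$ is constant, so $\partial_x\psi\equiv0$) and $v$ periodic in $x$. The essential difficulty, and the main obstacle throughout, is that the zeroth-order coefficient $g_\psi+h'$ has no definite sign, so the maximum principle cannot be applied to $v$ directly; every step below is designed to bypass this.

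The key device is to exhibit a positive supersolution of $L$ and pass to the quotient. Differentiating the equation instead in $y$ and setting $\phi:=\partial_y\psi$ gives
\[
L\phi=-f'(y)\,\phi-g_y(y,\psi),
\]
so under the standing hypotheses $f_y=f'\ge0$ and $g_y\ge0$, and as long as $\phi\ge0$, we have $L\phi\le0$; that is, $\phi$ is a nonnegative supersolution. Writing $v=\phi\,\zeta$ and using $Lv=0$, the indefinite zeroth-order term cancels and a direct computation gives
\[
\Delta\zeta+f(y)\,\partial_y\zeta+\tfrac{2}{\phi}\,\nabla\phi\cdot\nabla\zeta+\tfrac{1}{\phi}\,(L\phi)\,\zeta=0,
\]
whose zeroth-order coefficient $\phi^{-1}L\phi$ is now $\le0$. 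Since $\zeta=v/\phi$ vanishes on the top and bottom and is periodic in $x$, the maximum principle for operators with nonpositive zeroth-order coefficient forces $\zeta\equiv0$, hence $v\equiv0$, which is the desired conclusion.

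It remains to secure the strict positivity of the multiplier $\phi=\partial_y\psi$ needed to form and control the quotient. Under the second hypothesis this is almost immediate: $\psi_y\ge0$ is assumed, and rewriting the supersolution inequality as $\Delta\phi+f\,\partial_y\phi-c^-\phi\le-c^+\phi\le0$, where $c:=g_\psi+h'+f'$ and $c^\pm$ denote its positive and negative parts (using $\phi\ge0$), puts us in the framework of the strong maximum principle and Hopf lemma for an operator with nonpositive zeroth-order coefficient $-c^-$. As $\phi\not\equiv0$---otherwise $\psi$ would be $y$-independent, contradicting $\psi(x,1/2)=0\ne c=\psi(x,1)$---we conclude $\phi>0$ in the interior, with the requisite boundary control supplied by Hopf. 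Under the first hypothesis one must first upgrade the bound $0<\psi<c$ to the monotonicity $\partial_y\psi>0$, and I expect this to be the principal technical hurdle. The natural route is a sliding/comparison argument in the $y$-variable that exploits the monotonicity of the coefficients $f_y,g_y\ge0$ together with $0<\psi<c$ and the fact that $\nabla\psi$ is purely normal along $\partial D_0$ (so in the non-stagnant applications $\psi_y\ne0$ there), reducing the first case to the second.

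Finally, a word on regularity: since $f,g,h$ are only Lipschitz and $\psi\in C^2$, the linearized coefficients $g_\psi,h',g_y,f'$ should be read as bounded measurable functions, and to avoid spending a derivative one may run the entire scheme on the finite differences $\psi(x+\tau,\cdot)-\psi(x,\cdot)$, which remain $C^2$ and satisfy, via the mean value theorem on the Lipschitz data, a linear equation of exactly the same form; the standard maximum principle and Hopf lemma for such operators then apply and yield $\psi(x+\tau,\cdot)\equiv\psi(x,\cdot)$ for every $\tau$.
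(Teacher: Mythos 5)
Your proposal takes a genuinely different route from the paper: you linearize (differentiate in $x$) and try to kill $v=\partial_x\psi$ by a ground-state quotient transform with multiplier $\phi=\partial_y\psi$, whereas the paper never linearizes --- it runs a Berestycki--Nirenberg sliding argument on the differences $w^\tau=\psi(\cdot+\tau\xi)-\psi$ for an \emph{oblique} direction $\xi$ with $\xi_2>0$, proves $w^\tau>0$ by combining the maximum principle for narrow domains with a continuity argument in the sliding parameter $\tau_*$, and only at the end sends $\xi_2\to0$ to obtain monotonicity in $x$ in both directions. As written, however, your argument has two genuine gaps. The first is in the case where only $0<\psi<c$ is assumed: you defer the key step $\partial_y\psi\ge0$ to ``a sliding/comparison argument in the $y$-variable,'' but that sliding argument is precisely the entire content of the paper's proof (Lemma \ref{wtaulem}); in this case your proposal reduces the theorem to its hardest part without proving it. Note also that you cannot even assert $L\phi\le 0$ before knowing $\phi\ge0$, since the term $-f'(y)\phi$ has no sign otherwise, so the quotient machinery is unavailable until that step is done.

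The second gap concerns the boundary behaviour of the quotient even when $\psi_y\ge0$ is given. The strong maximum principle yields $\phi>0$ only in the \emph{interior}; at a boundary point where $\phi$ vanishes, Hopf's lemma gives $\partial_n\phi<0$, which is perfectly consistent with $\phi=0$ there (it only says $\psi_{yy}>0$ at such a point), so positivity of $\phi$ on $\overline{D_0}$ is not automatic. At such boundary zeros the quotient $\zeta=v/\phi$ is of the form $0/0$: its limiting value is governed by $\partial_y v/\partial_y\phi$ and need not vanish, so the assertion that ``$\zeta=v/\phi$ vanishes on the top and bottom'' --- the boundary condition you need to close the maximum principle --- is unjustified; moreover the drift coefficient $2\phi^{-1}\nabla\phi$ in the equation for $\zeta$ blows up near those zeros, so the boundary-point maximum principle cannot be applied naively. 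This is repairable (for instance by comparing the vanishing rates of $v$ and $\phi$ at the boundary, or by working on $\mathbb{T}\times[1/2+\epsilon,1-\epsilon]$ and tracking the boundary values of $\zeta$ as $\epsilon\to0$), but the repair is not in the proposal. Your closing remark about replacing derivatives by finite differences to handle the merely Lipschitz data is sound, and is indeed how the paper proceeds.
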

\begin{proof}[Proof of Theorem \ref{hn}]
 The proof is nearly identical to the one in \cite{HN17} with minor extension to accommodate $f$ and $g$.  For the sake of completeness we include a proof here.
Fix $\xi \in \R^2$ with $\xi = (\xi_1, \xi_2)$ with $\xi_2 > 0$.
For $\tau \in (0, 1/\xi_2)$,  set
\begin{equation}
 {D_0}^\tau =\mathbb{T} \times (1/2, 1-\tau \xi_2),
 \label{}
\end{equation}
and
\begin{equation}
 w^\tau(x) = \psi(x + \tau \xi) - \psi(x), \quad
 x \in \overline{{D_0}}^\tau.
 \label{wtaudef}
\end{equation}
Then the main ingredient for the proof of Theorem \ref{hn} is the following lemma
\begin{lemma}
  \label{wtaulem}
 For $w^\tau$ defined by \eqref{wtaudef} we have
 \begin{equation}
  w^\tau > 0 \quad \text{ in } \overline{{D_0}}^\tau, \quad
  \text{ for all } \tau \in (1/2, 1/\xi_2).
  \label{}
 \end{equation}
\end{lemma}
We first prove Theorem \ref{hn} assuming the result of this lemma.
  \label{wtaupf}
Note that $\psi \geq 0$ on ${D_0}$, since $\psi_y \geq 0$ on ${D_0}$ by assumption and the boundary values are $\psi|_{y=1/2}=0$ and $\psi|_{y=1}=c>0$.
  Taking $\xi_2 \to 0$ in the inequality $w^\tau > 0$ shows that
  \begin{equation}
   \psi(x + \tau \xi_1, y) \geq \psi(x, y).
   \label{decreasingshift}
  \end{equation}
  This holds for any $\xi_1 \in\R$ and we claim that this implies that
  we actually have equality in \eqref{decreasingshift}. Indeed, suppose
  that there are $x, \tau, \xi_1, y$ so that
  $\psi(x + \tau\xi_1, y) > \psi(x, y)$. Applying \eqref{decreasingshift} we have
  \begin{equation}
   \psi(x, y) = \psi(x -\tau\xi_1 + \tau \xi_1, y) \geq
   \psi(x + \tau \xi_1, y) > \psi(x, y),
   \label{}
  \end{equation}
  a contradiction. This completes the proof.
\end{proof}

\begin{proof}[Proof of Lemma \ref{wtaulem}]
Set
\begin{equation}
 \tau_* = \inf\{\tau \in (1/2, 1/\xi_2)\ \ \text{such that} \ \ w^{\tau'} > 0 \ \  \text{ in }
 \overline{{D_0}}_{\tau'} \text{ whenever } \tau' \in (\tau, 1/\xi_2)\}.
 \label{}
\end{equation}
By the maximum principle for narrow domains \cite{GNM79} we have $\tau_* < 1/\xi_2$.
We are going to prove that
$\tau_* = 1/2$. Suppose that instead $\tau_* > 1/2$. Then
$w^{\tau^*} \geq 0$ in $\overline{{D_0}}^{\tau_*}$ and there are sequences
$\tau^k \in (1/2, \tau_*]$ and $(x^k, y^k) \in {D_0}$ so that
\begin{equation}
 (x^k, y^k) \in \overline{{D_0}}^{\tau_k}, \quad
 \text{ and } \quad w^{\tau_k}(x^k, y^k) \leq 0.
 \label{}
\end{equation}
Define
\begin{equation}
 \psi_k(x,y) = \psi(x + x^k, y), \qquad \text{for} \qquad  (x,y) \in {D_0}.
 \label{}
\end{equation}
The functions $\psi_k$ are uniformly bounded in
 $C^{2,\alpha}({D_0}^{\tau_*})$,
and so we can extract a convergent subsequence with $\psi_k \to \Psi \in
C^{2}({D_0}^{\tau^*})$. Taking $k \to \infty$ we see that
 $
 0 \leq \Psi \leq c.$
 We now show that these inequalities are strict.
Taking $k\to \infty$
in the equation for $\psi$ and differentiating in $y$ we see that
\begin{equation}
 \Delta \Psi_y + f(y) \pa_y \Psi_y +
 \big(f_y(y) + g_{\Psi}(y, \Psi) +h_{\Psi}(\Psi)\big) \Psi_y
 = -g_y(y, \Psi)\leq 0,
 \label{}
\end{equation}
by assumption.
Since we also have $\Psi_y \geq 0$ on the boundary, it follows
from the maximum principle for non-negative functions
 that $\Psi_y > 0$ in the interior as well, and so
\begin{equation}
 0 < \Psi < c.
 \label{strict}
\end{equation}
Next, the points
$y^k$ are bounded and so we can extract a convergent subsequence
$y^k \to \tilde{y}$. We have
\begin{equation}
 \Psi(\tau_* \xi_2, \tilde{y} + \tau_* \xi_2) = \Psi(0, \tilde{y}),
 \label{contra}
\end{equation}
because $w^{\tau_*} \geq 0$ in ${D_0}^{\tau_*}$ and $w^{\tau_k} (x^k, y^k) \leq 0$.
If $(0, \tilde{y}) \in \pa {D_0}^{\tau_*}$
then either $\tilde{y} = 0$ or $\tilde{y} = 1- \tau_*\xi_2$. But
by \eqref{contra} and \eqref{strict} neither of these are possible.
The only possibility left is $(0, \tilde{y}) \in {D_0}^{\tau_*}$.
Set
\begin{equation}
 W(x) = \Psi(x+ \tau_* \xi) - \Psi(x),
 \label{}
\end{equation}
then writing $\Psi_{\tau_*}(x) = \Psi(x + \tau_*\xi)$
we see that in $D_0^{\tau_*}$, since $f, g$ are Lipschitz
in $\psi$ there is an $L^\infty$ function $c = c(x,y)$ so that
\begin{equation}
 \Delta W +f(y) \pa_y W + c(x,y)W =
 \big(f(y) - f(y+\tau_* \xi_2)\big)\pa_y \Psi_{\tau^*}
 +g(y, \Psi_{\tau^*}) - g(y+\xi_2\tau_*, \Psi_{\tau^*})
 \leq 0
 \label{} \nonumber
\end{equation}
because $\xi_2 \geq 0$ and that
$\pa_y \Psi \geq 0$ (which is only needed $f$ is nonzero) and that
$f,g$ are increasing in $y$. Also we have
$W \geq 0$ in ${D_0}$, $W \geq 0$ on $\pa {D_0}$. By the maximum
principle for non-negative functions this implies that $W \equiv 0$
and in particular $W = 0$ on $\pa {D_0}$.  As we have shown that this is
impossible, we conclude $\tau_* = \frac{1}{2}$.
\end{proof}

\subsection{Proof of Theorem \ref{liouE} }
We assume $\psi_0\in C^3(D_0)$.  Note that, since the vorticity  satisfies
\be
u_0\cdot \nabla \omega_0 =0
\ee
and $|u_0|\neq 0$ and $\omega_0\in C^1(D_0)$, Lemma \ref{eoslemma} implies that there exists a $C^1(\mathbb{R})$ function $F_0$ such that $\omega_0=F_0(\psi_0)$.  Consequently, the stream function $\psi_0$ satisfies the elliptic equation
\begin{align}
\Delta \psi_0 = F_0(\psi_0) & \quad \text{ in } D_0, \\
\psi|_{\partial D_0^{\rm top} } = c_1, &\quad \psi|_{\partial D_0^{\rm top} } = c_2,
\end{align}
for some constants $c_1$ and $c_2$ with $c_1\neq c_2$ by Lemma \ref{eoslemma2}. Without loss of generality, we may take $c_1=0$ and $c_2>0$ by shifting $\psi_0\mapsto \psi_0- c_1$, sending $\psi_0 \mapsto -\psi_0$ if $c_2<0$,
and replacing $F_0(\psi_0)$ with $\pm F_0(\pm \psi_0 + c_1)$. Moreover, by Lemma \ref{eoslemma2} we have
\be
0 < \psi_0 < c_2 \quad \text{ in } D_0.
\ee
Applying Theorem \ref{hn} with $b=0$, $f=0$ and $g=-F_0$ gives the result.

\subsection{Proof of Theorem \ref{liouBous} }

We argue as in the proof of Theorem \ref{liouE}, but we apply
Theorem \ref{hn} with $f=0$, $g(y,\psi)=y \Theta_0'(\psi)$ and $h(\psi)= - G_0(\psi)$.

\subsection{Proof of Theorem \ref{liouMHD} }
Assuming \eqref{condition1}, the proof follows as in Theorem \ref{liouE}, but now $f=-\frac{1}{r}$,
$g(y,\psi)=r^2 \Pi_0'(\psi)$ and $h(\psi)= C_0C_0'(\psi)$.

\section{Flexibility: Deforming Domains}\label{flexsec}

We prove here a more general theorem, which covers the specific settings of Theorems \ref{2dEthm},  \ref{2dBthm} and  \ref{3dMHDthm}. We now outline the general setup.
Consider two bounded domains $D_0, D\subset \mathbb{R}^2$ given by the zero level sets of functions $B_0, B: \mathbb{R}^2 \to \mathbb{R}$:
\begin{equation}
 \pa D_0 = \{ B_0 = 0\}, \qquad  \pa D = \{ B = 0\}.
 \label{bdry}
\end{equation}
It is convenient to denote points in $D_0$ by $y = (y_1,y_2)$ and points
 in $D$ by $x =(x_1,x_2)$.  We will consider the problem of solving a certain elliptic equation on $D$ by deforming a solution of a `nearby' elliptic equation on $D_0$. \\

 \noindent \textbf{Elliptic equation on $D_0$}:
Consider a second-order elliptic operator on $D_0$ of the form
\begin{equation}
 L_0 = \sum_{i,j = 1}^2a_0^{ij}(y)\frac{\pa}{\pa y^i}\frac{ \pa}{\pa y^j} +
 \sum_{i = 1}^2 b_0^i(y)\frac{\pa}{\pa y^i},
 \label{L0def}
\end{equation}
where $a_0^{ij}, b_0^i$ are smooth functions defined on $\R^2$ and where the matrix $a_0^{ij}$
satisfies
\begin{equation}
 a^{ij}_0z_i z_j \geq M |z|^2, \qquad \forall z\in \mathbb{R}^2
 \label{ellipticity}
\end{equation}
for some $M > 0$.
We assume that we have a solution $\psi_0$ to the following nonlinear equation
 \begin{align}
  L_0 \psi_0 &=   F_0(\psi_0) + G_0(y, \psi_0), \quad \text{ in } D_0,
  \label{modelpbm}\\
  \psi_0 &= {\rm (const.)}, \quad\quad\quad\quad\quad\ \ \text{ on } \partial D_0,
 \end{align}
with functions $F_0:\mathbb{R}\to \mathbb{R}$ and $G_0:D_0\times \mathbb{R}\to \mathbb{R}$.\\

  \noindent \textbf{Elliptic equation on $D$}:
Given
coefficients $a^{ij}, b^i$ defined on $\R^2$, we set
\begin{equation}
 L = \sum_{i,j = 1}^2a^{ij}(x)\frac{\pa}{\pa x^i}\frac{\pa}{\pa x^j} +
  \sum_{i = 1}^2 b^i(x)\frac{\pa}{\pa x^i},  \qquad
 \label{}
\end{equation}
which is assumed to be elliptic as in \eqref{ellipticity}.
Consider the following equation for $\psi$
 \begin{align}
  L\psi &=  F(\psi) + G(x ,\psi), \quad \text{ in } D,
  \label{modelpbmdef}\\
  \psi &= {\rm (const.)} , \quad\quad\quad\quad\  \text{ on } \partial D,
 \end{align}
with  functions $F:\mathbb{R}\to \mathbb{R}$ and $G:D\times \mathbb{R}\to \mathbb{R}$.
\\

\noindent
\textbf{Problem:}\textit{
Let  $D$ and $D_0$ by two nearby domains (in the sense that $B$ and $B_0$ are close)
Let $F_0$, $G_0$ and a solution $\psi_0$ to \eqref{modelpbm} on $D_0$ be given.  Let $G$ be a given function close to $G_0$.  Find a diffeomorphism $\gamma:D_0\to D$ and a function
$F$ close to $F_0$ so that the function
\begin{equation}
 \psi = \psi_0\circ \gamma^{-1}
 \label{psidefinition}
\end{equation}
is a solution to \eqref{modelpbmdef}}.
\\

\vspace{-2mm}

The important observation of \cite{WV05} is that if we write
$\gamma ={\rm id} + \nabla \eta + \nabla^\perp \phi$ for functions $\eta, \phi$,
then plugging \eqref{psidefinition} into \eqref{modelpbmdef} leads to
an Dirichlet problem for $\pa_s \phi := \nabla^\perp \psi_0\cdot \nabla \phi$.
The function $\eta$ is free in the problem but if one wants
to fix the value of the Jacobian determinant $\rho := \det \nabla \gamma$,
$\eta$ can be determined by solving a Neumann problem. We formalize this
in the following Proposition.
\begin{prop}[Elliptic system for diffeomorphism]\label{pset}
  Fix two domains $D_0, D \subseteq \R^2$
  as in \eqref{bdry} and a solution to \eqref{modelpbm} $\psi_0:D_0 \to \R$.
  Let $ F_0, G_0$ and $G$ be given. Let $\rho: D_0 \to \R$ be a given continuous
  function  such that $\int_{D_0} \rho = {\rm Vol}(D)$.
  Suppose that $\eta, \phi:D_0 \to \R$ satisfy
 \begin{align}\label{etaeqnprop}
\Delta \eta&= \rho-1 +   \sN_\eta,\\
  (L_0 - \Lambda) \partial_s \phi  &=(F-F_0)(\psi_0)   + \sL_\phi  + \sN_\phi , \label{phieqnprop}
\end{align}
 for some $F = F(\psi_0)$, where $L_0$ is as in \eqref{L0def},
 $\Lambda :=F_0'(\psi_0) + (\partial_{\psi}G_0)(y,\psi_0)$,
 where $\partial_s \phi = \nabla^\perp \psi_0\cdot \nabla \phi$, and
where
\be
 \sL_\phi=\sL_\phi(\delta a, \delta b,\delta F,  \partial\delta G, \partial a_0, \partial b_0, \partial^3 \eta, \partial\rho, \partial^2 \partial_s\phi, \partial_{\psi_0} \phi; \partial^3 \psi_0)
\ee is defined by \eqref{Lphi} consists of terms which are linear in $\phi$ and $\eta$ (and their derivatives), multiplied by small factors, where
\begin{align}
 \sN_\eta&= \sN_\eta(\partial^2 \eta, \partial^2\phi)\\
\sN_\phi &=\sN_\phi (\partial^2 a_0, \partial^2 b_0, \partial^2\phi, \partial^2 \eta,\partial\rho, \partial^2 \partial_s \phi; \partial^3 \psi_0)
\end{align}
are nonlinearities with
$\sN_\eta$ is defined by \eqref{Neta} and $\sN_\phi$  by  \eqref{Nphi},
and where $\delta a = a - a_0$ and similarly for $\delta b, \delta F, \delta G$.
If $\gamma = {\rm id} +  \nabla^\perp \phi + \nabla \eta$ is   a diffeomorphism  $\gamma:D_0\to D$ with
$\det \nabla \gamma =\rho$,  then
 the function $\psi = \psi_0\circ \gamma^{-1}$ is a solution of \eqref{modelpbmdef} in $D$.
\end{prop}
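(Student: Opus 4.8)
The plan is to derive the equations \eqref{etaeqnprop}--\eqref{phieqnprop} by substituting the ansatz $\gamma = {\rm id} + \nabla^\perp \phi + \nabla \eta$ into the target equation \eqref{modelpbmdef} and sorting the resulting terms by their size. The guiding principle is that $\phi$ and $\eta$ are small (they measure the deviation of $\gamma$ from the identity), so one should linearize around $\phi = \eta = 0$, collect the genuinely linear-in-$(\phi,\eta)$ contributions into $\sL_\phi$ and the quadratic-and-higher contributions into $\sN_\phi$, $\sN_\eta$, and then verify that the leading-order linear part reproduces the operator $(L_0 - \Lambda)\pa_s\phi$ on the left and $(F - F_0)(\psi_0)$ on the right. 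A key reduction is that $\psi = \psi_0 \circ \gamma^{-1}$ satisfies \eqref{modelpbmdef} \emph{iff} its pullback satisfies the transformed equation $L_\gamma \psi_0 = F(\psi_0) + G(\gamma(y), \psi_0)$ on $D_0$, where $L_\gamma f := L(f\circ\gamma^{-1})\circ\gamma$ as in \eqref{maineqn}; thus the entire computation can be carried out on the fixed domain $D_0$.

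First I would compute $L_\gamma$ explicitly in terms of $\gamma$. By the chain rule, if $\gamma$ has Jacobian $\nabla\gamma$ then the transported second-order operator has coefficients $\tilde a^{ij} = (a\circ\gamma)\,(\nabla\gamma^{-1})$ contracted appropriately, plus first-order terms coming from second derivatives of $\gamma^{-1}$ together with $b\circ\gamma$. Writing $\gamma = {\rm id} + V$ with $V = \nabla^\perp\phi + \nabla\eta$, I would expand $\nabla\gamma = I + \nabla V$ and $(\nabla\gamma)^{-1} = I - \nabla V + O(|\nabla V|^2)$, so that $L_\gamma = L_0 + (\text{linear in }\nabla V) + (\text{higher order})$ after also accounting for the difference $\delta a = a - a_0$, $\delta b = b - b_0$ evaluated along $\gamma$. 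Applying this to $\psi_0$ and using $L_0\psi_0 = F_0(\psi_0) + G_0(y,\psi_0)$, the zeroth-order terms cancel against $F_0(\psi_0) + G_0$ on the right, leaving an equation whose principal unknown is the directional derivative $\pa_s\phi = \nabla^\perp\psi_0\cdot\nabla\phi$ — this is the crucial algebraic fact from \cite{WV05}, that differentiating $\psi_0$ along $V$ produces exactly $\nabla\psi_0\cdot V = \pa_s\phi$ modulo the gradient part $\nabla\psi_0\cdot\nabla\eta$, the latter being absorbed into the Jacobian constraint. Linearizing the nonlinearities $F(\psi_0\circ\gamma^{-1})$ and $G$ in the displacement then contributes the term $\Lambda\,\pa_s\phi = (F_0'(\psi_0) + \pa_\psi G_0)\,\pa_s\phi$, which moves to the left to form $(L_0 - \Lambda)\pa_s\phi$, while the mismatch $(F - F_0)(\psi_0)$ plus $\delta G$ sits on the right.

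The second component is the Jacobian equation. Using $\det\nabla\gamma = \det(I + \nabla V) = 1 + \div V + O(|\nabla V|^2)$ and $\div V = \div(\nabla^\perp\phi + \nabla\eta) = \Delta\eta$ (since $\div\nabla^\perp\phi = 0$), the constraint $\det\nabla\gamma = \rho$ becomes $\Delta\eta = \rho - 1 + \sN_\eta$ where $\sN_\eta$ gathers the quadratic determinant terms, quadratic in $\pa^2\phi, \pa^2\eta$ — this is \eqref{etaeqnprop}, and it is the reason the free function $\eta$ is pinned down by a Neumann-type problem once $\rho$ is prescribed. I expect the main obstacle to be \emph{bookkeeping}: one must track which terms are linear versus nonlinear in $(\phi,\eta)$, confirm that every linear term carries a genuinely small prefactor (coming from $\delta a$, $\delta b$, $\delta F$, $\pa\delta G$, or derivatives of $\psi_0$ that multiply derivatives of $\phi,\eta$) so that $\sL_\phi$ can be treated perturbatively, and verify the claimed dependence lists for $\sL_\phi$, $\sN_\eta$, $\sN_\phi$ match the actual expansion. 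The elliptic and solvability issues are deferred to Theorem \ref{thm1}; here the statement is purely that \emph{if} $\gamma$ built from solutions $(\phi,\eta)$ of \eqref{etaeqnprop}--\eqref{phieqnprop} happens to be a diffeomorphism with the prescribed Jacobian, then $\psi = \psi_0\circ\gamma^{-1}$ solves \eqref{modelpbmdef}. Thus the proof is essentially a reversal of the derivation: substituting a solution of the two displayed equations back into $L_\gamma\psi_0 - F(\psi_0) - G$ and checking that all terms cancel by construction, with the boundary condition $\psi = {\rm const.}$ holding automatically since $\gamma(\pa D_0) = \pa D$ and $\psi_0$ is constant on $\pa D_0$.
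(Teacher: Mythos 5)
Your proposal is correct and follows essentially the same route as the paper: an explicit change of variables pulling \eqref{modelpbmdef} back to $D_0$, expansion of $(\nabla\gamma)^{-1}$ around the identity, identification of $\det\nabla\gamma = 1+\Delta\eta - \sN_\eta$ from $\div(\nabla^\perp\phi+\nabla\eta)=\Delta\eta$, and sorting of the remaining terms into the leading operator $(L_0-\Lambda)\pa_s\phi$, the small-coefficient linear part $\sL_\phi$, and the quadratic remainders, with the verification being the reversal of this derivation. The paper carries out the same bookkeeping explicitly (working with $\rho^2(L\psi)\circ\gamma$ and streamline coordinates $\pa_s,\pa_{\psi_0}$), but the structure of the argument is identical to yours.
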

This Proposition is proved in \S \ref{changeofcoords} (see Lemma \ref{philem}).
With this in hand, we address the above problem by constructing solutions with one (infinite dimensional) degree of freedom fixed by choosing the Jacobian of the map.  This requires three hypotheses on $\psi_0$ and the quantities in \eqref{modelpbmdef}.
\\

We need one hypothesis on the invertibility of the operator appearing in Proposition \eqref{pset} so the $\partial_s\psi$ can be recovered from  eqn. \eqref{phieqnprop} at the linear level.
We view $L_0:H_0^1\cap H^2 \to L^2$ and require:\\

\vspace{-2mm}
\noindent  {\rm Hypothesis 1 (\textbf{H1}): } Let $\Lambda =F_0'(\psi_0) + (\partial_{\psi}G_0)(y,\psi_0)$.  The problem
\begin{align}
  \left(L_0 - \Lambda\right) u &= 0 \qquad \text{in}\ \   D_0,\\
  u&=0  \qquad \text{on}\ \   \partial D_0,
\end{align}
admits only the trivial solution in $H^1_0(D_0)$.
\\

It is easy to see that Hypothesis  {\rm (\textbf{H1})} is guaranteed if $\Lambda$ avoids the discrete spectrum of $-L_0$, an open condition. In light of this, a stronger but easier to verify hypothesis that implies (\textbf{H1}) is
\\

\noindent  {\rm Hypothesis 1$^\prime$ (\textbf{H1}$^\prime$): } The operator $  \left(L_0 - \Lambda\right) $ is positive definite, i.e. for  all $f\in H^1_0(D_0)$ there is a constant $C>0$ such that
$
\langle   \left(L_0 - \Lambda\right) f, f\rangle_{L^2(D_0)} \geq C \|f \|_{H^1(D_0)}^2.$\\

This holds in the case of the 2d Euler equation if the base state is Arnol'd stable or if it is a shear flow without stagnation points (see Lemma \ref{kerlem}).

The next two hypotheses are needed in order to recover
$\phi$ from $\pa_s\phi$ once the latter is obtained by solving eqn. \eqref{phieqnprop} using (\textbf{H1}).  Since $\pa_s = \nabla^\perp \psi_0\cdot \nabla$,
in order to recover $\phi$, we must be able to integrate along
 streamlines of $\psi_0$ which requires a certain non-degeneracy of the base state.
 On a multiply connected domain diffeomorphic to the annulus, the base state must have no stagnation points (points at which $\nabla \psi_0=0$). On a simply connected domain diffeomorphic to a disc, there must be exactly one stagnation point.  This is quantified by the following hypothesis on the ``travel-time"  $\mu$ of  a parcel moving at speed $|\nabla \psi|$ to make a complete revolution on a streamline:
\\

\noindent  {\rm Hypothesis 2 (\textbf{H2}): } Let $I={\rm im}(\psi_0)$. There exists a constant $C>0$ so that
\be
 \mu(c) = \oint_{\{\psi_0=c\}} \frac{\rmd \ell}{|\nabla \psi_0|}
\leq C  \qquad \text{for all}\ \   c\in I
\ee
where $\ell$ is the arc-length parameter.  Note if $\psi_0\in C^{k,\alpha}(D_0)$ then $\mu \in C^{k-1,\alpha}(I)$.   \\

Finally, we  need an additional hypothesis that allows us to recover $\phi$ once we solve eqn. \eqref{phieqnprop}  for $\partial_s \phi$.
Specifically, at the linear level, $\phi$
needs to be chosen to satisfy $(L_0 - \Lambda)  \pa_s \phi = F + N,$
for a given function $N$ and for $F$ to be determined. An obvious necessary
condition for solvability is that $  (L_0 - \Lambda)_{\rm hbc}^{-1} (F + N)$ should have integral
zero along streamlines.
  We must therefore be able to choose $F$ to satisfy
 this condition while maintaining that $F = F(\psi_0)$ is a function
 \textit{only of the stream function} in order for the resulting $\psi$ to
 solve the correct equation. \\

\noindent  {\rm Hypothesis 3 (\textbf{H3}): } Fix $k\geq 2$, $\alpha\in (0,1)$ and let $I={\rm im}(\psi_0)$. Let $K_{\psi_0}: C^{k-2,\alpha}(I)\to C^{k,\alpha}(I)$ be
\be\nonumber
(K_{\psi_0} u)(c):= \frac{1}{\mu(c)} \oint_{\{\psi_0=c\}}  (L_0 - \Lambda)^{-1}_{\rm hbc}[u\circ \psi_0]\   \frac{\rmd \ell}{|\nabla \psi_0|} , \qquad c\in I.
\ee
For any $g\in C^{k,\alpha}(I)$ such that $g(\psi_0(\partial D_0))=0$,  there exists a $u\in C^{k-2,\alpha}(I)$ such that $K_{\psi_0} u = g$.  Moreover, $\|u\|_{C^{k-2,\alpha}(I)} \lesssim \|g\|_{C^{k,\alpha}(I)}$.
\\

It turns out that {\rm (\textbf{H3})} is a consequence of   {\rm (\textbf{H1}$^\prime$)} and {\rm (\textbf{H2})}.  To prepare for the proof, we define the streamline projector $\mathbb{P}_{\psi_0}$ which maps functions on $D_0$ to functions which are constant on level sets of the streamfunction $\psi_0$,
\be
(\mathbb{P}_{\psi_0} f)(c):=  \frac{1}{\mu(c)} \oint_{\{\psi_0=c\}}  f\  \rmd s, \qquad \text{for all}\ \   c\in I
\ee
where $\rmd s= \rmd \ell / |\nabla \psi_0|$.
This operation is well defined on functions which can be integrated on curves (e.g. functions that are in $H^1(D_0)$) by Hypothesis  {\rm (\textbf{H2})}. With this notation we have
$
K_{\psi_0} u:=  \mathbb{P}_{\psi_0}  (L_0 - \Lambda)^{-1}_{\rm hbc}[u].
$
Note that if $f,g$ are such that $\mathbb{P}_{\psi_0} f=0$ and $\mathbb{P}_{\psi_0} g=g$ then
\be\label{ortho}
\int_{D_0} f g = \int_I\left( \oint_{\{\psi_0=c\}} f g \rmd s\right) \rmd c =  \int_I  g \left(\oint_{\{\psi_0=c\}} f  \rmd s\right) \rmd c = 0.
\ee
Here we use the fact that $\psi_0$ satisfies  {\rm (\textbf{H2})} and therefore has streamlines which foliate $D_0$ so we can use action-angle coordinates to compute the integral \eqref{ortho}.
For further discussion see \S \ref{stream} herein or the textbook
\cite{A89}.
It follows that $\mathbb{P}_{\psi_0}$ is orthogonal in $L^2(D_0)$, i.e. for any $h\in C(D_0)$ we have
\begin{align}
\| h\|_{L^2}^2 &= \int_{D_0} \Big(| \mathbb{P}_{\psi_0}h|^2 + 2 (\mathbb{P}_{\psi_0}h) (\mathbb{Q}_{\psi_0} h) + |\mathbb{Q}_{\psi_0}h|^2 \Big) = \|\mathbb{P}_{\psi_0}h\|_{L^2}^2 + \|\mathbb{Q}_{\psi_0}h\|_{L^2}^2\label{PQform}
\end{align}
where $\mathbb{Q}_{\psi_0}= \mathbf{1} - \mathbb{P}_{\psi_0}$.   In light of these properties, $\mathbb{P}_{\psi_0}$ is a projection on $L^2$.

The
motivation for Lemma \ref{h1h2h3lem} in a Hilbert space $H$ is that if $P$ is a projection ($P^2=P$  and $P^*=P$)  and $A$ is  bounded positive operator then the compression
$PAP$ is positive in $PH$ since
$$
\langle PAPx,x\rangle_H = \langle APx,Px\rangle_H\ge C\langle Px,Px\rangle_H.
$$
The fact that $A$ is bounded is used only to make sure that $PH$ is included
in the domain of $A$.

\begin{lemma}\label{h1h2h3lem}
Fix $k\geq 2$ and suppose Hypotheses  {\rm (\textbf{H1}$^\prime$)} and {\rm (\textbf{H2})}  hold.
Then {\rm (\textbf{H3})} holds.
\end{lemma}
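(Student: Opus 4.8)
The plan is to view $K_{\psi_0}$ as the $\mathbb{P}_{\psi_0}$--compression of the solution operator of $A := L_0 - \Lambda$ and to reduce its invertibility to that of the complementary compression $\mathbb{Q}_{\psi_0} A\,\mathbb{Q}_{\psi_0}$, whose coercivity is exactly what (\textbf{H1}$'$) provides. Set $H := L^2(D_0)$ and regard $\mathbb{P}_{\psi_0}$, $\mathbb{Q}_{\psi_0} = \mathbf{1} - \mathbb{P}_{\psi_0}$ as the orthogonal projections of \eqref{ortho}--\eqref{PQform}. By (\textbf{H2}) and $\psi_0\in C^{k,\alpha}(D_0)$ these are also bounded on $H^1_0(D_0)$ and on the Hölder spaces, as one sees by passing to the action--angle coordinates $(c,s)$ of \S\ref{stream}, in which $d\,\mathrm{Area} = \rmd c\,\rmd s$ and $\mathbb{P}_{\psi_0}$ is averaging in the angle $s$. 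Identifying a function $u$ on $I$ with its $\mathbb{P}_{\psi_0}$--invariant lift $u\circ\psi_0$, one has $K_{\psi_0} = \mathbb{P}_{\psi_0}\,(L_0-\Lambda)^{-1}_{\rm hbc}\,\mathbb{P}_{\psi_0}$. By (\textbf{H1}$'$) the bilinear form $a(f,f'):=\langle (L_0-\Lambda)f, f'\rangle$ is coercive on $H^1_0(D_0)$, so Lax--Milgram makes $A$ invertible and $(L_0-\Lambda)^{-1}_{\rm hbc}$ its bounded solution operator.

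First I would record injectivity and positivity, which is the compression heuristic stated just before the lemma applied to the positive operator $A^{-1}$: for $x\in \mathbb{P}_{\psi_0}H$, setting $w=A^{-1}x$ gives $\langle K_{\psi_0}x, x\rangle_{L^2} = \langle A^{-1}x, x\rangle_{L^2} = \langle Aw, w\rangle_{L^2}\geq C\|w\|_{H^1}^2$, so that $K_{\psi_0}x=0$ forces $w=0$ and hence $x=0$.

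The main step is solvability with bounds. Given $g\in C^{k,\alpha}(I)$ with $g(\psi_0(\partial D_0))=0$, I seek $w\in H^1_0(D_0)$ with $\mathbb{P}_{\psi_0}w = g$ and $Aw\in \mathbb{P}_{\psi_0}H$; then $u := Aw =\tilde u\circ\psi_0$ defines $\tilde u\in C^{k-2,\alpha}(I)$ and $K_{\psi_0}\tilde u = \mathbb{P}_{\psi_0}A^{-1}u=\mathbb{P}_{\psi_0}w = g$. Writing $w = g\circ\psi_0 + q$ with $q\in V:=\mathbb{Q}_{\psi_0}H^1_0(D_0)$, the condition $Aw\in\mathbb{P}_{\psi_0}H$ is the weak equation $a(w,v)=0$ for all $v\in V$, i.e. $a(q,v) = -a(g\circ\psi_0,v)$ on $V$. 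Since $\mathbb{Q}_{\psi_0}$ is an orthogonal projection and $\langle Av, v\rangle\geq C\|v\|_{H^1}^2$, the restriction of $a$ to $V$ is coercive,
\[
 a(v,v)=\langle A\,\mathbb{Q}_{\psi_0}v,\,\mathbb{Q}_{\psi_0}v\rangle\geq C\|v\|_{H^1}^2 \qquad (v\in V),
\]
so Lax--Milgram yields a unique $q\in V$ with $\|q\|_{H^1}\lesssim\|g\circ\psi_0\|_{H^1}\lesssim\|g\|_{C^{1,\alpha}(I)}$. The hypothesis $g(\psi_0(\partial D_0))=0$ makes $g\circ\psi_0$ vanish on $\partial D_0$, so $w=g\circ\psi_0+q\in H^1_0(D_0)$, and $\mathbb{P}_{\psi_0}w = g\circ\psi_0 + \mathbb{P}_{\psi_0}q = g$ because $q\in V$. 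Equivalently, this exhibits the inverse as the Schur complement $K_{\psi_0}^{-1}=\mathbb{P}_{\psi_0}A\mathbb{P}_{\psi_0} - \mathbb{P}_{\psi_0}A\mathbb{Q}_{\psi_0}(\mathbb{Q}_{\psi_0}A\mathbb{Q}_{\psi_0})^{-1}\mathbb{Q}_{\psi_0}A\mathbb{P}_{\psi_0}$.

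It remains to upgrade $q\in H^1$ to $q\in C^{k,\alpha}$, whence $u=(L_0-\Lambda)w\in C^{k-2,\alpha}(I)$ with the stated bound. Reading the Schur-complement formula term by term and using that $(\mathbb{Q}_{\psi_0}A\mathbb{Q}_{\psi_0})^{-1}$ gains two Hölder derivatives by the Schauder estimates for the coercive compressed operator, while $\mathbb{P}_{\psi_0},\mathbb{Q}_{\psi_0}$ preserve the Hölder scale and $A=L_0-\Lambda$ costs two, shows that $g\mapsto \tilde u$ maps $C^{k,\alpha}(I)\to C^{k-2,\alpha}(I)$ boundedly, giving $\|\tilde u\|_{C^{k-2,\alpha}(I)}\lesssim\|g\|_{C^{k,\alpha}(I)}$, which is (\textbf{H3}). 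I expect the main obstacle to be the rigorous handling of the projections: proving $\mathbb{P}_{\psi_0},\mathbb{Q}_{\psi_0}$ are bounded on $H^1_0(D_0)$ and on $C^{k,\alpha}(D_0)$ (so that $V$ is a genuine closed subspace on which Lax--Milgram and Schauder apply) and controlling their behaviour at the two boundary streamlines, i.e. the endpoints of $I$. All of this rests on the foliation and the travel-time bound $\mu\in C^{k-1,\alpha}(I)$ supplied by (\textbf{H2}) through the action--angle coordinates of \S\ref{stream}; by contrast (\textbf{H1}$'$) enters only through the one-line coercivity of the compression $\mathbb{Q}_{\psi_0}A\mathbb{Q}_{\psi_0}$, and in particular no self-adjointness of $A$ is needed, since Lax--Milgram uses only the lower bound on $\langle Av,v\rangle$.
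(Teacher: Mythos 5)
Your existence argument takes a genuinely different route from the paper's. The paper proves surjectivity of $K_{\psi_0}$ head-on: it shows the range $S_K$ is closed in $S^{(1)}_0$, equips $S^{(1)}_0$ with a weighted inner product $\langle\cdot,\cdot\rangle_{S^{(1)}} = \langle\partial_{\psi_0}\cdot,\partial_{\psi_0}\cdot\rangle + M\langle\cdot,\cdot\rangle$, and proves $\langle h, K_{\psi_0}h\rangle_{S^{(1)}}$ is coercive (using a G\r{a}rding-type inequality for $\langle (L_0-\Lambda)f,\partial_{\psi_0}^2 f\rangle$), so that $S_K^\perp=\{0\}$. You instead dualize: rather than inverting the compression $\mathbb{P}_{\psi_0}A^{-1}\mathbb{P}_{\psi_0}$ of the \emph{inverse}, you solve for the preimage $w=A^{-1}(u\circ\psi_0)$ directly by splitting $w = g\circ\psi_0 + q$ with $q\in V=\mathbb{Q}_{\psi_0}H^1_0$ and applying Lax--Milgram to the restriction of the form $a$ to $V$, where coercivity is an immediate restriction of {\rm(\textbf{H1}$'$)}. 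This is the Schur-complement identity $\bigl((A^{-1})_{PP}\bigr)^{-1} = A_{PP}-A_{PQ}A_{QQ}^{-1}A_{QP}$, and at the level of weak solvability it is correct and arguably cleaner than the paper's argument: it avoids the construction of the custom $S^{(1)}$ topology and the closed-range footnote entirely, and it isolates exactly where positivity (as opposed to mere invertibility) of $L_0-\Lambda$ enters.

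The genuine gap is the regularity upgrade. You assert that $(\mathbb{Q}_{\psi_0}A\,\mathbb{Q}_{\psi_0})^{-1}$ ``gains two H\"older derivatives by the Schauder estimates for the coercive compressed operator.'' There is no off-the-shelf Schauder theory for this operator: the equation for $q$ is $Aq = -A(g\circ\psi_0) + \mathbb{P}_{\psi_0}A w$, and the term $\mathbb{P}_{\psi_0}Aw$ is a \emph{nonlocal} average of second derivatives over streamlines, so the problem is not an elliptic PDE to which interior or boundary Schauder estimates apply. Worse, the right-hand side contains the unknown $\tilde u\circ\psi_0=Aw$ itself, which a priori lives only in the dual of $\mathbb{Q}_{\psi_0}H^1_0$; until you know $\tilde u\in L^2(I)$ you cannot even identify it with a function of $\psi_0$, let alone feed it into elliptic regularity for $q$. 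Breaking this circularity is precisely what the paper's appendix does: it uses the differentiation-along-$\nabla\psi_0$ formula of Lemma \ref{geolem} to show that the commutators $[\partial_{\psi_0},\mathbb{P}_{\psi_0}]$ and $[\partial_{\psi_0},(L_0-\Lambda)^{-1}_{\rm hbc}]$ are of order zero relative to the gain of the solution operator, and then bootstraps one derivative of $\tilde u$ at a time from the identity $K_{\psi_0}[\partial_{\psi_0}u]=[\partial_{\psi_0},K_{\psi_0}]u+\partial_{\psi_0}g$. Your proof needs an equivalent commutator argument (applied either to $K_{\psi_0}$ or to the coupled system for $(q,\tilde u)$) to reach the quantitative conclusion $\|u\|_{C^{k-2,\alpha}(I)}\lesssim\|g\|_{C^{k,\alpha}(I)}$ demanded by {\rm(\textbf{H3})}; as written, the final step does not go through.
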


The proof is deferred to \S \ref{h1h2h3lemsec}.
We remark that, invertibility of $\L_0 -\Lambda$ alone (Hypothesis  {\rm (\textbf{H1})}) cannot be expected to imply Hypothesis  {\rm (\textbf{H3})} itself as is easily demonstrated in finite dimensions.  Positive definiteness is a crucial point in our argument.
We finally note that if we further know that  $(L_0 - \Lambda)g(\psi_0)$ is itself a function of $\psi_0$, which is the case when the base solution and the operator $L_0$ enjoy some mutual symmetry, we can find the solution of Hypothesis  {\rm (\textbf{H3})} explicitly

\begin{lemma}\label{h3lem}
Suppose for any $f\in C^{k,\alpha}(I)$, the function  $(L_0 - \Lambda)f(\psi_0)$
depends only on the value of the stream function,
$(L_0 - \Lambda)f(\psi_0) = h(\psi_0)$ for some $h\in C^{k-2,\alpha}(I)$ . Then  {\rm \textbf{(H3)}} holds with $u=(L_0 - \Lambda)g$.
\end{lemma}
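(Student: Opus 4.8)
The plan is to avoid inverting the compression operator $K_{\psi_0}$ altogether and instead exhibit its preimage in closed form, exploiting the stated symmetry. Given $g\in C^{k,\alpha}(I)$ with $g(\psi_0(\partial D_0))=0$, the natural candidate is dictated by the structure $K_{\psi_0}=\mathbb{P}_{\psi_0}\,(L_0-\Lambda)^{-1}_{\rm hbc}$: to undo the inverse I would take $u$ to be the profile produced by applying $L_0-\Lambda$ to $g(\psi_0)$. Concretely, I would \emph{define} $u\in C^{k-2,\alpha}(I)$ by the relation $(L_0-\Lambda)[g(\psi_0)]=u(\psi_0)$, which is legitimate precisely because the hypothesis guarantees that $L_0-\Lambda$ sends functions of $\psi_0$ to functions of $\psi_0$; this is the meaning of the shorthand $u=(L_0-\Lambda)g$.

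The verification that $K_{\psi_0}u=g$ then proceeds in two steps. First I would check that $w:=g(\psi_0)$ is the homogeneous-boundary-condition solution of $(L_0-\Lambda)w=u(\psi_0)$: since $\psi_0$ is constant on each component of $\partial D_0$ and $g$ vanishes on $\psi_0(\partial D_0)$, we have $w|_{\partial D_0}=0$, and then uniqueness (Hypothesis \textbf{(H1)}, implicit in the very definition of $(L_0-\Lambda)^{-1}_{\rm hbc}$) forces $(L_0-\Lambda)^{-1}_{\rm hbc}[u(\psi_0)]=g(\psi_0)$. Second, I would apply $\mathbb{P}_{\psi_0}$ and use that $g(\psi_0)$ is by construction constant on each level set $\{\psi_0=c\}$, so that the streamline average simply returns its value:
\[
 (\mathbb{P}_{\psi_0}[g(\psi_0)])(c)=\frac{1}{\mu(c)}\oint_{\{\psi_0=c\}} g(c)\,\rmd s = g(c)\,\frac{1}{\mu(c)}\oint_{\{\psi_0=c\}}\rmd s = g(c),
\]
where $\mu(c)=\oint_{\{\psi_0=c\}}\rmd s$. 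Chaining the two identities yields $K_{\psi_0}u=\mathbb{P}_{\psi_0}[g(\psi_0)]=g$.

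For the quantitative bound I would track regularity along the construction: composition with the fixed map $\psi_0\in C^{k,\alpha}$ is bounded $C^{k,\alpha}(I)\to C^{k,\alpha}(D_0)$, the second-order operator $L_0-\Lambda$ maps $C^{k,\alpha}(D_0)\to C^{k-2,\alpha}(D_0)$, and reading off the profile $u$ from $u(\psi_0)$ costs nothing since $|\nabla\psi_0|>0$; composing these gives $\|u\|_{C^{k-2,\alpha}(I)}\lesssim\|g\|_{C^{k,\alpha}(I)}$ with a constant depending only on $\psi_0,L_0,\Lambda$. I expect no genuine analytic obstacle: in contrast to Lemma \ref{h1h2h3lem}, where solvability of $K_{\psi_0}$ required positivity of the compression $\mathbb{P}_{\psi_0}(L_0-\Lambda)^{-1}_{\rm hbc}\mathbb{P}_{\psi_0}$, here the symmetry hypothesis trivializes the inversion. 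The only points needing care are the boundary-condition check ensuring the cancellation $(L_0-\Lambda)^{-1}_{\rm hbc}(L_0-\Lambda)=\mathrm{id}$ is applied to a function with homogeneous data, and the observation that $\mathbb{P}_{\psi_0}$ acts as the identity on functions that are already constant along streamlines.
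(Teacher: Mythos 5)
Your proposal is correct and coincides with the argument the paper intends: the lemma's statement already prescribes $u=(L_0-\Lambda)g$, and your verification — that $g(\psi_0)$ has homogeneous boundary data so $(L_0-\Lambda)^{-1}_{\rm hbc}[u(\psi_0)]=g(\psi_0)$ by uniqueness, and that $\mathbb{P}_{\psi_0}$ acts as the identity on functions of $\psi_0$ — is exactly the (omitted) two-step check, with the regularity bound following as you describe. No gaps.
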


Our main theorem on deforming solutions of elliptic equations is a quantitative  version of the implicit function theorem.  As stated above, this generalizes the setup and results of Wirosoetisno and Vanneste  \cite{WV05}.  It is also  similar in spirit to the result of Choffrut and Sver\'{a}k  \cite{CS12} which, on annular domains,  establishes  a one-to-one correspondence between vorticity distribution functions and steady states of two-dimensional Euler  nearby a solution satisfying a version of {\rm (\textbf{H1})} (see also \cite{D17}).  In our theorem, $F:= F(\psi)$ (which plays the role of the vorticity distribution function for 2d Euler) is not chosen ahead of time but rather  accommodates the deformation of the other parameters (boundary, coefficients, Jacobian) so as the resulting streamfunction remain a solution. We prove

\begin{theorem}[Deforming solutions of elliptic equations]\label{thm1}
 Let $\alpha\in (0,1)$ and $k\geq 2$. Fix two domains $D_0, D \subseteq \R^2$
with $C^{k,\alpha}$ boundaries given by \eqref{bdry} and a solution $\psi_0 : D_0 \to \mathbb{R}$ to
  \eqref{modelpbm} on $D_0$ with $\psi_0 \in C^{k,\alpha}(D_0)$ and $F_0 \in C^{k-1,\alpha}(\mathbb{R})$.  Suppose in addition that {\rm (\textbf{H1})}, {\rm (\textbf{H2})} and {\rm (\textbf{H3})} are satisfied.
  Let $\rho: D_0 \to \R$ such that $\rho \in C^{k-1,\alpha}(D_0)$ and $\int_{D_0} \rho = {\rm Vol}(D)$.
  Suppose that for sufficiently small $\ve > 0$, $|{\rm Vol}(D)- {\rm Vol}(D_0)| \lesssim \ve$ as well as
  \begin{alignat}{2}
  \| B- B_0\|_{C^{k,\alpha}} &\leq \ve  , \qquad\ \   \| \rho- 1\|_{C^{k-1,\alpha}} && \leq \ve\\
    \| a- a_0\|_{C^{k,\alpha}} &\leq \ve  , \qquad \ \  \| b- b_0\|_{C^{k,\alpha}} &&\leq \ve,\\
  \|G- G_0\|_{C^{k-2,\alpha}} &\leq \ve .
\end{alignat}
  Then, for $\ve$ sufficiently small, there exists a diffeomorphism $\gamma : D_0 \to D$ such that $\det \nabla \gamma =\rho$ and a function $F:\mathbb{R}\to \mathbb{R}$ satisfying $ \| F- F_0\|_{C^{k-2,\alpha}} \lesssim \ve$ such that
 the function $\psi = \psi_0\circ \gamma^{-1}$ satisfies
  the equation \eqref{modelpbmdef} in $D$.
  The diffeomorphism $\gamma$ is of the form $\gamma = {\rm id} +\nabla \eta
  +\nabla^\perp \phi$ and $\eta, \phi$ satisfy the estimates
  \begin{multline}
    \|\pa_s\eta\|_{C^{k,\alpha}} + \|\pa_s \phi\|_{C^{k,\alpha}}
     + \|\eta\|_{C^{k,\alpha}} + \|\phi \|_{C^{k,\alpha}}
     \\
     \leq C_{k,\alpha} \big( \|\rho  -1\|_{C^{k-1,\alpha}} +
     \|a - a_0\|_{C^{k,\alpha}} + \|b-b_0\|_{C^{k,\alpha}}
     + \|G - G_0\|_{C^{k-2,\alpha}} + \|B - B_0\|_{C^{k,\alpha}} \big)
   \label{gammabdthm}
  \end{multline}
  for constants $C_{k,\alpha}$ depending on $k, \alpha, D_0$,
  the ellipticity constant $M$ and $\|a_0\|_{C^{k,\alpha}}, \|b_0\|_{C^{k,\alpha}}$.
\end{theorem}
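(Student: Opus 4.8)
The plan is to solve the coupled system \eqref{etaeqnprop}--\eqref{phieqnprop} furnished by Proposition \ref{pset} for the pair $(\eta,\phi)$ together with the profile $F$, via a quantitative implicit-function-theorem (Banach fixed point) argument organized around the smallness parameter $\ve$. By Proposition \ref{pset}, once such a triple $(\eta,\phi,F)$ is produced with $\gamma = \mathrm{id} + \nabla\eta + \nabla^\perp\phi$ satisfying $\det\nabla\gamma = \rho$, the function $\psi = \psi_0\circ\gamma^{-1}$ automatically solves \eqref{modelpbmdef}; moreover $\gamma$ is a diffeomorphism of $D_0$ onto $D$ because it is a $C^{k,\alpha}$ (with $k\geq 2$) perturbation of the identity of size $O(\ve)$, the boundary conditions built into the system forcing $\gamma(\partial D_0)=\partial D$. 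I would therefore view the theorem as the assertion that the nonlinear map sending a candidate $(\eta,\phi)$ to the right-hand sides $(\rho-1+\sN_\eta,\ (F-F_0)(\psi_0)+\sL_\phi+\sN_\phi)$ and then back through the linear solution operators has a fixed point in a ball of radius $\asymp\ve$ in the natural Hölder space, with $F$ determined en route.

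The heart of the matter is the solvability of the linearized problem, i.e.\ the system with $\sL_\phi,\sN_\eta,\sN_\phi$ frozen as data. First I would solve the $\eta$-equation $\Delta\eta = \rho-1+\sN_\eta$ as a Poisson problem with Neumann-type boundary data encoding the normal boundary displacement $B-B_0$; the compatibility condition $\int_{D_0}(\rho-1+\sN_\eta)=\oint_{\partial D_0}\partial_n\eta$ is arranged by the normalization $\int_{D_0}\rho=\Vol(D)$ together with $|\Vol(D)-\Vol(D_0)|\lesssim\ve$, and Schauder theory controls $\|\eta\|_{C^{k,\alpha}}$ by the data. The $\phi$-equation is handled by the three-step chain that motivates the hypotheses: writing the right side as $(F-F_0)(\psi_0)+N$ with $N:=\sL_\phi+\sN_\phi$ known, the obstruction to recovering a single-valued $\phi$ from $\partial_s\phi=\nabla^\perp\psi_0\cdot\nabla\phi$ by integration along the (closed, by (\textbf{H2})) streamlines is exactly $\mathbb{P}_{\psi_0}[\partial_s\phi]=0$. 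Imposing this forces $K_{\psi_0}(F-F_0)=-\mathbb{P}_{\psi_0}[(L_0-\Lambda)^{-1}_{\rm hbc}N]$, which by (\textbf{H3}) is solved for $F-F_0\in C^{k-2,\alpha}$ with the quantitative bound $\|F-F_0\|_{C^{k-2,\alpha}}\lesssim\|N\|$; Hypothesis (\textbf{H1}) then inverts $(L_0-\Lambda)$ to produce $\partial_s\phi$, and a further integration against $\rmd\ell/|\nabla\psi_0|$ along streamlines (whose bounded travel time $\mu$ is guaranteed by (\textbf{H2})) recovers $\phi\in C^{k,\alpha}$. The derivative count closes because $(L_0-\Lambda)^{-1}$ gains two derivatives while the solvability correction only costs the two derivatives allotted to $F$, so $\phi$ lands in $C^{k,\alpha}$ and $F$ in $C^{k-2,\alpha}$ as claimed.

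With the linear solution operator bounded as above, the full nonlinear map becomes a contraction for $\ve$ small: each term of $\sL_\phi$ carries an explicitly small prefactor ($\delta a,\delta b,\delta F,\partial\delta G,\partial^3\eta,\partial\rho$, etc., all $O(\ve)$), while $\sN_\eta$ and $\sN_\phi$ are at least quadratic in $(\partial^2\eta,\partial^2\phi,\partial^2\partial_s\phi)$. Thus on a ball of radius $C\ve$ the map preserves the ball and contracts, and the Banach fixed point theorem yields $(\eta,\phi,F)$. Feeding the fixed-point bounds back through the linear estimates produces the inequality \eqref{gammabdthm}, and one checks that the resulting $\gamma$ has $\det\nabla\gamma=\rho$ (which is precisely the $\eta$-equation, since in the expansion $\det\nabla\gamma=1+\Delta\eta+(\text{quadratic terms})$ the $\nabla^\perp\phi$ contribution to the trace vanishes) and is a diffeomorphism onto $D$, completing the argument.

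The step I expect to be the main obstacle is the coupled linear solve for $\phi$ and $F$: one must simultaneously enforce the single-valuedness constraint $\mathbb{P}_{\psi_0}[\partial_s\phi]=0$ and keep $F$ a function \emph{of $\psi_0$ alone}, all without losing derivatives. This is exactly the delicate interplay of the three hypotheses --- invertibility of $L_0-\Lambda$ (\textbf{H1}), boundedness of the streamline period $\mu$ (\textbf{H2}), and surjectivity with estimates for $K_{\psi_0}$ (\textbf{H3}) --- and it is here that the action-angle structure and the orthogonality \eqref{PQform} of $\mathbb{P}_{\psi_0}$ must be used to verify that the compatibility datum $g:=-\mathbb{P}_{\psi_0}[(L_0-\Lambda)^{-1}_{\rm hbc}N]$ indeed satisfies $g(\psi_0(\partial D_0))=0$, so that (\textbf{H3}) applies. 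Coordinating the boundary conditions for $\eta$ and $\phi$ jointly --- so that $\gamma$ carries $\partial D_0$ onto $\partial D$ while respecting the Jacobian constraint --- is the other point requiring care.
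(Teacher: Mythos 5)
Your proposal follows essentially the same route as the paper: reduction via Proposition \ref{pset}, a Neumann solve for $\eta$, determination of $F$ through (\textbf{H3}) to kill the streamline average of the right-hand side, inversion of $L_0-\Lambda$ via (\textbf{H1}) to get $\partial_s\phi$, recovery of $\phi$ by integration along streamlines using (\textbf{H2}), and a small-ball contraction/iteration closing in norms that track $\partial_s\eta$, $\partial_s\phi$ separately from $\eta$, $\phi$. The one step you flag but do not resolve --- reconciling the solvability-mandated constant Neumann datum for $\eta$ with the requirement $\gamma(\partial D_0)=\partial D$ --- is handled in the paper a posteriori by Lemma \ref{areaconvergence}, which uses $\int_{D_0}\rho=\mathrm{Vol}(D)$ and the level-set area formula to show the limiting $\gamma$ sends $\partial D_0$ to the correct level set $\{B=0\}$.
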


Theorem \ref{thm1} is used in \cite{CDG21} to construct approximate  solutions to the Magnetohydrostatic equations on wobbled tori which are nearly quasisymmetric. 
As in \cite{VW08}, one may think about these deformations arising dynamically  from a slow adiabatic deformation of the boundary, though we do not establish this point here.
We remark also that (\textbf{H1}) may not  be strictly needed for the Theorem \ref{thm1} provided kernel of $L_0 - \Lambda$ is very well understood. This is demonstrated in a slightly different context by the recent work \cite{CEW20} for Kolmogorov flow $u_0 = (\sin (y), 0)$ which is a shear with stagnation points so that Lemma \ref{kerlem} does not apply and the corresponding operator $\Delta-F'_0(\psi_0)$ has a non-trivial kernel (consisting of linear combinations of $\{\sin(y), \cos(y),\sin(x), \cos(x)\}$).  To deal with this degeneracy,  extra degrees of freedom are introduced in the contraction scheme.

We note that we can iterate the above
theorem to impose a nonlinear
constraint on $\rho$. Specifically, given a function
$X = X(y, \phi, \eta,  \nabla \phi, \nabla \eta,
\nabla \pa_s\phi, \nabla \pa_s\eta)$ with
$X|_{\phi, \eta = 0}$ sufficiently close to one, we can solve for the diffeomorphism $\gamma$ so that
$\rho = X$, at the expense of slightly modifying the domain.
Fixing notation, we consider
\begin{equation}
  X: D_0 \times \R \times \R \times \R^2 \times \R^2\times \R^2\times \R^2, \quad
 X = X(y, q_1, q_2, p_1, p_2, p_3, p_4)
 \label{Xsetup}
\end{equation}
and write
\begin{equation}
 X_0(y) = X|_{(q, p) = (0,0)}
 \qquad DX_0(y) = (\nabla_q X, \nabla_p X)|_{(q, p) = (0,0)}
 \label{X0notation}
\end{equation}
with $q = (q_1, q_2), p = (p_1, p_2, p_3, p_4)$.

\begin{theorem}[Deforming with an imposed nonlinear constraint]\label{thm2}
 Let $\alpha\in (0,1)$ and $k\geq 2$. Fix two domains $D_0, D \subseteq \R^2$
with $C^{k,\alpha}$ boundaries given by \eqref{bdry} and a solution $\psi_0 : D_0 \to \mathbb{R}$ to
  \eqref{modelpbm} on $D_0$ with $\psi_0 \in C^{k,\alpha}(D_0)$ and $F_0 \in C^{k-2,\alpha}(\mathbb{R})$.
   Let $X$ be as in \eqref{Xsetup} and satisfy $X\in C^{k,\alpha}$.

   Suppose that for sufficiently small $\ve, \ve_X > 0$, $|{\rm Vol}(D)- {\rm Vol}(D_0)| \leq \ve$ and that
    \begin{alignat}{2}
    \| B- B_0\|_{C^{k,\alpha}} &\leq \ve  , \qquad
    \| a- a_0\|_{C^{k,\alpha}} &&\leq \ve , \\
    \| b- b_0\|_{C^{k,\alpha}} &\leq \ve,\qquad
    \| G- G_0\|_{C^{k-2,\alpha}} &&\leq \ve,
\end{alignat}
and with notation as in \eqref{X0notation},
\begin{equation}
  \| X_0- 1\|_{C^{k,\alpha}}
    + \|DX_0\|_{C^{k-1,\alpha}} \leq \ve_X.\label{BXbds}
\end{equation}
  Then there exists a $\sigma>0$ and a diffeomorphism $\gamma:D_0 \to D_\sigma$ where $ D_\sigma  = \sigma D$
  is a dilation-by-$\sigma$ of the domain $D$ satisfying
  \be
\det \nabla \gamma=: \rho = X(y, \eta, \phi, \pa_s\phi, \pa_s\eta,
\nabla \pa_s\eta, \nabla \pa_s\phi),
\label{satisfynonlin}
  \ee
  with the dilation factor $\sigma > 0$ given by
$
  \sigma^2:=  {\rm Vol} D\bigg/ \int_{D_0} \rho.
$
Moreover, there is a function $F:\mathbb{R}\to \mathbb{R}$ satisfying $ \| F- F_0\|_{C^{k-2,\alpha}} \lesssim \ve$ such that
  $\psi = \psi_0\circ \gamma^{-1}$ is a solution to
  the  \eqref{modelpbmdef} in $D_\sigma$.
\end{theorem}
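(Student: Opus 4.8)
The plan is to realize Theorem \ref{thm2} as a fixed-point argument layered directly on top of Theorem \ref{thm1}. The key observation is that Theorem \ref{thm1} already furnishes, for each admissible prescribed Jacobian $\rho$ (with $\int_{D_0}\rho$ equal to the volume of the target domain), a solution map $\rho \mapsto (\eta[\rho], \phi[\rho], F[\rho])$ producing a diffeomorphism $\gamma = \mathrm{id} + \nabla\eta + \nabla^\perp\phi$ with $\det\nabla\gamma = \rho$, together with the quantitative bound \eqref{gammabdthm}. The new difficulty in Theorem \ref{thm2} is that $\rho$ is no longer prescribed in advance but must equal the nonlinear functional $X$ evaluated on the very solution it produces; moreover $X$ need not satisfy the volume-compatibility condition $\int_{D_0}X(\cdots)=\Vol(D)$, which is precisely why one is forced to dilate the target domain by a factor $\sigma$ chosen to restore volume matching.

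First I would set up the iteration on the Jacobian. Fix a small ball $\mathcal{B}_\delta = \{\rho \in C^{k-1,\alpha}(D_0) : \|\rho - 1\|_{C^{k-1,\alpha}} \le \delta\}$. Given $\rho \in \mathcal{B}_\delta$, define the dilation factor $\sigma = \sigma(\rho)>0$ by the volume-matching relation of the statement, so that a map $\gamma:D_0\to D_\sigma$ can have Jacobian $\rho$ (i.e. $\int_{D_0}\rho = \Vol(D_\sigma)$); since $\int_{D_0}\rho$ is within $O(\delta)$ of $\Vol(D_0)$ and $\Vol(D)$ is within $O(\ve)$ of $\Vol(D_0)$, we get $|\sigma-1|\lesssim \delta+\ve$, so $\partial D_\sigma$ stays an $O(\delta+\ve)$-perturbation of $\partial D_0$ and Theorem \ref{thm1} applies with target $D_\sigma$. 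Applying it yields $\eta[\rho],\phi[\rho],F[\rho]$, and I define the iteration map
\[
\Phi(\rho) := X\big(y,\, \eta[\rho],\, \phi[\rho],\, \pa_s\phi[\rho],\, \pa_s\eta[\rho],\, \nabla\pa_s\eta[\rho],\, \nabla\pa_s\phi[\rho]\big).
\]
A fixed point $\rho^* = \Phi(\rho^*)$ is exactly a solution satisfying the constraint \eqref{satisfynonlin}, with dilation $\sigma(\rho^*)$ and associated $F[\rho^*]$.

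To run the contraction I would verify the two standard properties on $\mathcal{B}_\delta$. For the self-map property, write $\Phi(\rho)-1 = (X_0-1) + \big(X(y,\eta,\phi,\dots)-X_0\big)$: the first term is $\le \ve_X$ by \eqref{BXbds}, while the second is controlled by $\|DX_0\|_{C^{k-1,\alpha}}$ times the size of the arguments $\eta,\phi,\pa_s\phi,\dots,\nabla\pa_s\phi$, all of which are $\lesssim \ve+\delta$ by \eqref{gammabdthm} (note $\nabla\pa_s\phi\in C^{k-1,\alpha}$ is controlled by $\|\pa_s\phi\|_{C^{k,\alpha}}$, exactly matching the choice to measure $\rho$ in $C^{k-1,\alpha}$). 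Choosing $\delta,\ve,\ve_X$ small gives $\Phi(\mathcal{B}_\delta)\subseteq \mathcal{B}_\delta$. For the contraction property, the crucial input is that the solution map of Theorem \ref{thm1} depends Lipschitz-continuously on $\rho$ and on the target domain $D_\sigma$, i.e. $\|\eta[\rho_1]-\eta[\rho_2]\|_{C^{k,\alpha}} + \|\phi[\rho_1]-\phi[\rho_2]\|_{C^{k,\alpha}} \lesssim \|\rho_1-\rho_2\|_{C^{k-1,\alpha}}$ uniformly on $\mathcal{B}_\delta$. Composing with $X$, whose increments are governed by $\|DX_0\|$, yields $\|\Phi(\rho_1)-\Phi(\rho_2)\|_{C^{k-1,\alpha}} \le C(\ve+\ve_X+\delta)\,\|\rho_1-\rho_2\|_{C^{k-1,\alpha}}$, a genuine contraction once the small parameters are small enough. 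The Banach fixed point theorem then produces $\rho^*$, and the accompanying $F[\rho^*]$ inherits $\|F-F_0\|_{C^{k-2,\alpha}}\lesssim \ve$ directly from Theorem \ref{thm1}.

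The main obstacle I anticipate is establishing the Lipschitz dependence of the output $(\eta,\phi,F)$ of Theorem \ref{thm1} on the data $(\rho,\sigma)$, since \eqref{gammabdthm} only controls the size of a single solution rather than the difference of two. I would extract this by revisiting the contraction-mapping (quantitative implicit function) scheme behind Theorem \ref{thm1}: the difference of two solutions satisfies the system \eqref{etaeqnprop}--\eqref{phieqnprop} linearized about either solution, with a forcing that is linear in $\rho_1-\rho_2$ up to uniformly bounded, $O(\ve)$-small coefficients, so the same invertibility from Hypotheses (\textbf{H1})--(\textbf{H3}) and the same a priori estimates deliver the required Lipschitz bound. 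The dependence on $\sigma$ enters only through the boundary function $B_\sigma$ of $D_\sigma$ and is likewise Lipschitz because $\rho\mapsto \sigma(\rho)$ is smooth and stays near $1$; the remaining steps are routine bookkeeping.
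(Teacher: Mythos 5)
Your proposal is correct and follows essentially the same route as the paper: the paper iterates $\rho^N = X(y,\eta^{N-1},\phi^{N-1},\dots)$, applies Theorem \ref{thm1} at each step with the dilation chosen to restore volume compatibility, and closes via uniform bounds plus a Cauchy estimate, which is exactly your Banach fixed point on $\rho$ written out as an explicit iteration on $(\eta^N,\phi^N)$. The Lipschitz dependence of the Theorem \ref{thm1} solution map on $\rho$ that you flag as the main obstacle is indeed the needed ingredient; the paper obtains it the same way (by rerunning the contraction scheme on differences) and merely records it as ``a similar argument.''
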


 We do not apply Theorem \ref{thm2} in the present paper. We record it here since it exploits a freedom in the construction and may be useful to build solutions with additional desirable properties (such as quasisymmetry in the context of plasma confinement fusion, see \cite{CDG21}).

\section{Applications to Fluid Systems }

\subsection{Proof of Theorem \ref{2dEthm} }

In the case of two-dimensional Euler equations on the channel, we apply Theorem \ref{thm1}   with $a_0^{ij} = a^{ij}\equiv \delta^{ij}$, $b_0^i=b^i\equiv 0$, $c_0=c=0$, $G=G_0=0$ and $F_0 =F_0$, the vorticity of the base state.
 As a result of Theorem \ref{liouE}, our base state $u_0= (v_0(y),0)$ is a shear where $v_0(y)= -\psi_0'(y)$ never vanishes.  As a consequence, it  satisfies \eqref{gseuler1} with $F_0(\psi)= \psi_0''(\psi_0^{-1}(\psi))$.
 We now show that all the hypotheses are met.
\\

\vspace{-2mm}
We first claim that in this setting  Hypothesis {\rm \textbf{(H1$^\prime$)}}  is a consequence
of the nondegeneracy of the base shear flow.
This follows immediately from the following Lemma (see e.g. \cite{GN17})
\begin{lemma}\label{kerlem}
Let $\Omega$ the periodic channel and let $u_0= (v_0(y_2),0)$ be a shear flow steady Euler solution and suppose $\inf_\Omega |v_0|>0$.  For all $u$ such that $u|_{\partial \Omega}=0$, the following holds
\be
 \int_\Omega  u\Big(\Delta   - F_0'(\psi_0)\Big) u\  \rmd y_1 \rmd y_2= -  \int_\Omega  | v_0(y_2)|^2 \left|\nabla \left(\frac{u}{| v_0(y_2)|}\right)\right|^2  \rmd y_1 \rmd y_2.
\ee
\end{lemma}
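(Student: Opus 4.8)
The plan is to express the zeroth-order coefficient $F_0'(\psi_0)$ explicitly in terms of the shear profile $v_0$ and then carry out a Rayleigh-type integration by parts. Since $u_0 = \nabla^\perp \psi_0 = (v_0(y_2),0)$, the stream function depends only on $y_2$ with $\psi_0'(y_2) = -v_0(y_2)$, and the Grad--Shafranov relation $\Delta \psi_0 = F_0(\psi_0)$ becomes the pointwise identity $\psi_0''(y_2) = F_0(\psi_0(y_2))$. Differentiating this in $y_2$ and dividing by $\psi_0' = -v_0$, which is nonvanishing by hypothesis, gives
\begin{equation}
F_0'(\psi_0(y_2)) = \frac{\psi_0'''(y_2)}{\psi_0'(y_2)} = \frac{v_0''(y_2)}{v_0(y_2)}.
\end{equation}
Setting $V := |v_0|$, which is smooth and bounded below by $\inf_\Omega |v_0| > 0$ (so $v_0$ keeps a fixed sign), one has $V''/V = v_0''/v_0$, hence $F_0'(\psi_0) = V''/V$.

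Next I would integrate the left-hand side by parts. Using $u|_{\partial\Omega} = 0$ for the $y_2$-boundary and periodicity in $y_1$, all boundary contributions vanish and
\begin{equation}
\int_\Omega u \Big(\Delta - F_0'(\psi_0)\Big) u\ \rmd y_1 \rmd y_2 = -\int_\Omega |\nabla u|^2 - \int_\Omega \frac{V''}{V}\, u^2.
\end{equation}
The heart of the argument is the substitution $u = V w$, i.e. $w = u/V$. Since $\nabla V = (0, V')$, one expands $|\nabla u|^2 = V^2 |\nabla w|^2 + V V' \partial_2(w^2) + (V')^2 w^2$, and integrates the middle term by parts in $y_2$. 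Its boundary terms vanish because $w = u/V = 0$ on $\partial\Omega$, and one is left with
\begin{equation}
\int_\Omega |\nabla u|^2 = \int_\Omega V^2 |\nabla w|^2 - \int_\Omega \frac{V''}{V}\, u^2,
\end{equation}
using $\partial_2(VV') = (V')^2 + VV''$ and $V V'' w^2 = (V''/V) u^2$.

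Substituting this back, the two copies of $\int_\Omega (V''/V) u^2$ cancel exactly, leaving $-\int_\Omega V^2 |\nabla w|^2 = -\int_\Omega |v_0|^2 \big|\nabla(u/|v_0|)\big|^2$, which is the claimed identity. I do not expect a genuine obstacle here, as this is the classical manipulation underlying Rayleigh's inflection-point criterion and Arnol'd's stability argument; the only points requiring care are justifying $F_0'(\psi_0) = V''/V$ (which uses that $v_0$ is nonvanishing and single-signed, so that $V$ is smooth and the chain rule applies), and confirming that every boundary term vanishes — the $y_1$-terms by periodicity and the $y_2$-terms because $u$, and hence $w = u/V$, vanishes on $\partial\Omega$.
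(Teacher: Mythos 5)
Your proposal is correct and is precisely the ``direct computation'' that the paper's proof alludes to after noting $F_0'(\psi_0(y_2))=(v_0''/v_0)(y_2)$: the same identification of the zeroth-order coefficient followed by the substitution $u=|v_0|w$ and a single integration by parts. No gaps; the handling of the sign of $v_0$ and of the boundary terms is exactly what is needed.
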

\begin{proof}
Note that  $F_0'(\psi_0(y_2))=(v_0''/v_0)(y_2)$. The result follows from direct computation.
\end{proof}

 Hypothesis {\rm \textbf{(H2)}} follows by our assumption that there are no stagnation points.
 \\

\vspace{-2mm}

\subsection{Proof of Theorem \ref{2dEthmarnold} }
In the case of two-dimensional Euler equations on $(M_0,g_0)$ we apply Theorem \ref{thm1}
$L_0 = \Delta_{g_0}$ and $L = \Delta_g$.  The coefficients can be computed directly in terms of the metrics and it is clear that the hypotheses on the closeness of $a_0$ and $a$ (resp $b_0$ and $b$) hold when $g_0$ is close to $g$.  Note also that under our hypotheses, $\L_\xi \psi=0$ according to Theorem \ref{propA}.
\\

\vspace{-2mm}
 Hypothesis {\rm \textbf{(H1$^\prime$)}}   follows by the assumption of Arnol'd stability.
\\

\vspace{-2mm}
 Hypothesis {\rm \textbf{(H2)}} follows by our assumption on the base states that they are non-degenerate.
 \\

\vspace{-2mm}
 Persistence of stability follows because Arnol'd stability conditions are open and our perturbation is small.

\begin{remark}[Hypothesis {\rm (\textbf{H3})}  on Domains with Symmetry]
We remark that if the domain admits a symmetry direction tangent to the boundary (so that all Arnol'd stable solutions enjoy the same symmetry according to Proposition \ref{propA}), we may apply Lemma \eqref{h3lem} to write explicitly the solution in Hypothesis {\rm (\textbf{H3})}.  Specifically, we apply the Lemma with $L_0 = \Delta_g$ and $\Lambda = F'$, and appeal to the following result
\begin{lemma}\label{h3lem2}
Let $\Delta_g$ be the Laplace--Beltrami operator on $(M,g)$. Suppose $\xi$ is a non-vanishing Killing field for $g$ which is tangent to $\partial M$.  Assume for $\psi\in C^{k,\alpha}(M)$ satisfies {\rm \textbf{(H2)}} and that $\L_\xi \psi=0$. Then for any function $f\in C^{k,\alpha}(\mathbb{R})$, we have  $\Delta_g f(\psi)= G(\psi)$ for some function $G\in C^{k-2,\alpha}(\mathbb{R})$.
\end{lemma}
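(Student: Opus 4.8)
The plan is to show that $\Delta_g f(\psi)$ is constant along the level sets of $\psi$, and then to invoke an equation-of-state argument, in the spirit of Lemma \ref{eoslemma}, to produce the profile function $G$ together with its Hölder regularity. The only structural input beyond the stated hypotheses is the commutator identity $[\L_\xi, \Delta_g] = 0$, which holds because $\xi$ is Killing for $g$ and was already recorded in the discussion preceding Proposition \ref{propA}.

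First I would verify that $f(\psi)$ is itself annihilated by $\L_\xi$. Since $\L_\xi$ acts on the scalar $f(\psi)$ by the chain rule, $\L_\xi f(\psi) = f'(\psi)\,\L_\xi \psi = 0$ by the hypothesis $\L_\xi \psi = 0$. Commuting the Lie derivative through the Laplace--Beltrami operator then gives $\L_\xi\big(\Delta_g f(\psi)\big) = \Delta_g\big(\L_\xi f(\psi)\big) = 0$, so that $\Delta_g f(\psi)$ is invariant under the flow of $\xi$.

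Next I would translate this $\xi$-invariance into constancy on the level sets of $\psi$. Because $\L_\xi \psi = 0$ is the same as $d\psi(\xi) = 0$, the field $\xi$ is everywhere tangent to the level sets of $\psi$; at points where $\nabla_g \psi \neq 0$ it is therefore a nonvanishing multiple of $\nabla_g^\perp \psi$, so that $\L_\xi h = 0$ is equivalent to $\nabla^\perp_g \psi \cdot \nabla h = 0$ there. Hypothesis {\rm (\textbf{H2})} guarantees that these level sets foliate $M$ by closed curves along which the flow of the nonvanishing field $\xi$ is periodic and sweeps out the entire curve; combined with the continuity of $\nabla_g \psi$, this ensures that $\Delta_g f(\psi)$ takes a single value on each level set. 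Consequently $\Delta_g f(\psi)$ depends only on $\psi$ and defines a function $G$ on $I = {\rm im}(\psi)$ with $\Delta_g f(\psi) = G(\psi)$.

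Finally, for the regularity I would argue exactly as in Lemma \ref{eoslemma}, adapted to $(M,g)$: since $f \in C^{k,\alpha}(\R)$ and $\psi \in C^{k,\alpha}(M)$, the function $\Delta_g f(\psi)$ lies in $C^{k-2,\alpha}(M)$, and restricting it to a transversal integral curve of $\nabla_g \psi$ — along which $\psi$ is a $C^{k,\alpha}$ diffeomorphism onto its image because $|\nabla_g \psi| \neq 0$ there — realizes $G$ as the composition of a $C^{k-2,\alpha}$ function with a $C^{k,\alpha}$ inverse, whence $G \in C^{k-2,\alpha}(I)$. I expect the main obstacle to be this last regularity step near the endpoints of $I$, i.e.\ near critical values of $\psi$, where the transversal parametrization degenerates; here the bounded travel-time estimate of {\rm (\textbf{H2})} is precisely what controls that degeneration and guarantees that $G$ extends with the claimed Hölder regularity up to the extreme level sets.
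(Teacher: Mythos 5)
Your proposal is correct and follows essentially the same route as the paper: use the Killing property to commute $\L_\xi$ through $\Delta_g$ and conclude $\L_\xi \Delta_g f(\psi)=0$, then use \textbf{(H2)} together with $\L_\xi\psi=0$ to identify the level sets of $\psi$ with the integral curves of $\xi$, so that $\Delta_g f(\psi)$ descends to a function of $\psi$. The paper's proof is terser (it does not spell out the H\"older regularity of $G$, which your transversal-curve argument in the style of Lemma \ref{eoslemma} supplies), but the underlying argument is the same.
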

\begin{proof}
First, by assumption the integral curves of $\xi$ foliate $M$.  Since $\L_\xi \psi=0$, we know that $\psi$ is constant on integral curves of $\xi$.  Moreover, since {\rm \textbf{(H2)}} guarantees that $|\nabla_g \psi|>0$ except at one point (if the domain is simply connected, and nowhere otherwise),  $\psi$ takes different values on different integral curves of $\xi$. Since $\xi$ is a  Killing field, $\L_\xi \Delta_g f( \psi)= \Delta_g ( f'(\psi) \L_\xi  \psi )=0$ and therefore $\L_\xi \Delta_g  f(\psi)$ is constant on integral curves of $\xi$ and thus a function of $\psi$.
\end{proof}
Therefore, Lemma \ref{h3lem} and  \ref{h3lem2} show that Hypothesis {\rm (\textbf{H3})}  holds with an explicit $u$ in the symmetric setting.
\end{remark}

\subsection{Proof of Theorem \ref{2dBthm} }

In the case of two-dimensional Boussinesq equations on the channel, we have $a_0^{ij} = a^{ij}\equiv \delta^{ij}$,  $b_0^i=b^i\equiv 0$, $c_0=c=0$,
 $G_0=y \Theta_0'(\psi_0)$, $G=y \Theta'(\psi)$,  and $F_0= G_0'$.
Then $L_0=\Delta$ and
$\Lambda =G_0'(\psi_0) + y_2 \Theta_0'(\psi_0)$.
\\

\vspace{-2mm}
 Hypothesis {\rm \textbf{(H1$^\prime$)}}  is verified for the following reason.
Since $\psi_0$ is a shear $\psi_0= \psi_0(y_2)$.  Given this, we know that $\Delta \psi_0=  \psi_0''(y_2)$ so that
$G_0(c) + y_2 \Theta_0(c)=  \psi_0''(\psi_0^{-1}(c))$.  Thus
\be
\Lambda= G_0'(\psi_0) + y_2 \Theta_0'(\psi_0)=\frac{v_0''(y_2)}{v_0(y_2)}
\ee
where $v_0= \psi_0'$. Thus Lemma \ref{kerlem} is applicable and the hypothesis follows.
\\

\vspace{-2mm}
 Hypothesis {\rm\textbf{(H2)}} follows by our assumption that there are no stagnation points.
 \\


\vspace{-2mm}
The result of the deformation defines a stream function $\psi$ for the Boussinesq equations with velocity $u=\nabla^\perp \psi$ and temperature profile $\theta = \Theta(\psi)$ (note $\Theta$ is recovered from $\Theta'$ up to a constant, which can be absorbed into the pressure).

\subsection{Proof of Theorem \ref{3dMHDthm} }

\vspace{-2mm}
 Hypothesis {\rm \textbf{(H1)}}  is verified when
 \be
{\rm im}\Big(   (C_0C_0')'(\psi_0)- r^2 P_0'(\psi_0)\Big) \notin {\rm Spec} \left(-\Delta+ \frac{1}{r} \partial_r\right).
 \ee
Since $-\Delta+ \tfrac{1}{r} \partial_r$ is a positive operator  Hypothesis {\rm \textbf{(H1$^\prime$)}}  is verified when
\be
(C_0C_0')'(\psi_0)- r^2 P_0'(\psi_0)< 0.
\ee
 Hypothesis {\rm\textbf{(H2)}} follows by our assumption that there are no stagnation points.

\appendix

\section{Proof of Lemma \ref{h1h2h3lem}}\label{h1h2h3lemsec}

We aim to solve $K_{\psi_0}[u] = g$ for a $g:= g(\psi_0)$ with $g(\partial D_0)=0$.  To avoid technical difficulties of defining the trace of a function which is just in $L^2$, we solve this equation assuming $g\in H^1$ for an $u\in H^{-1}$.
 Define the spaces
\begin{align}
 S^{(k)}  &= \{f\in H^k(D_0)\ | \  \mathbb{Q}_{\psi_0} f = 0\},\\
S^{(k)}_0 &= \{f\in H^k(D_0)\ | \  \mathbb{Q}_{\psi_0} f = 0, \ \  f|_{\partial D_0}=0\}.
\end{align}
Note that for $k\geq 1$ the operator $\mathbb{P}_{\psi_0}:  (H^{k}\cap H^{1}_0)(D_0) \to S^{(k)}_0$ is a continuous operator (which follows from \eqref{geolem}) and that therefore $S^{(k)}_0$ is a closed subspace of $H^{k}\cap H^{1}_0$.
We also remark that for all $f\in S^k$, we know that $\nabla^\perp\psi_0 \cdot \nabla f = 0$ and therefore by (a minor extension of) Lemma \ref{eoslemma}, there exists a function $F\in H^k(I)$ such that $f= F\circ \psi_0$.

Recall now that for $u\in S^{(k)}$ and with this notation we have
$
K_{\psi_0} u:=  \mathbb{P}_{\psi_0}  (L_0 - \Lambda)^{-1}_{\rm hbc}[u].
$
For $k\geq 1$, this operator $K_{\psi_0}: S^{(k-2)}\to S^{(k)}_0$ is continuous since $(L_0 - \Lambda)^{-1}_{\rm hbc}: H^{k-2}(D_0)\to (H^{k}\cap H^1_0)(D_0)$ is continuous by Hypothesis  {\rm (\textbf{H1}$^\prime$)} together with the fact that $\mathbb{P}_{\psi_0}$ is continuous.
We remark that for $ f\in S^{(-1)}$,  we have $ (L_0 - \Lambda)^{-1}_{\rm hbc}[f]\in H^1$.  Define now
\begin{align}
S_K &:= \{ K_{\psi_0} f \ | \  f\in S^{(-1)} \}  \subseteq S^{(1)}_0.
\end{align}

We aim to show that $S_K=S^{(1)}_0$, that is, $K_{\psi_0}: S^{(-1)}\to S^{(1)}_0$ is  onto.
Since $K_{\psi_0}:S^{(k-2)} \to S^{(k)}_0$ is continuous, $S_K$ is a closed subspace of $S^{(1)}_0$  in the $H^1(D_0)$ topology\footnote{By the continuity of $K_{\psi_0}$, it suffices to prove that if $K_{\psi_0}[f^n]\to g$ in $H^1$ with $f^n\in H^{-1}$, then $f^n$ converges in $H^{-1}$.   By \eqref{coersive} we have
\be
c_0\|  f^n - f^m\|_{H^{-1}}^2  \leq \langle f^n - f^m , K_{\psi_0}[f^n - f^m]\rangle_{L^2}  \leq  C\|  f^n - f^m\|_{H^{-1}}\|K_{\psi_0}[f^n - f^m]\|_{H^1},
\ee
which can be justified by an approximation.
From this, we conclude that the sequence $\{f^n\}_{n\geq 0}$ is Cauchy in $H^{-1}$.
}.   Thus,  $S^{(1)}_0=S_K\oplus S_K^\perp$ where
   \begin{align}\label{Skperp}
  S_K^\perp &:= \{ f\in S^{(1)}_0  \ | \ \langle f,g\rangle_{S^{(1)}}= 0  \  \text{for all} \ g\in S_K\},
\end{align}
where the $S^{(1)}$ topology is equivalent to the $H^1$ topology and will be defined shortly.
Note that  for any $h\in S^\perp_K\subset S^{(1)}$, the function $K_{\psi_0}h \in S_K$, it follows from \eqref{Skperp} that  $\langle h,K_{\psi_0}h \rangle_{S^{(1)}}=0$.   Thus, to conclude that $S_K^\perp = \{ 0\}$, we show that $K_{\psi_0}$ has a trivial kernel in $S^{(1)}_0$.  This is accomplished by designing  a topology on $ S^{(1)}$ which is equivalent to the $H^1$ topology and showing that for all $h\in S^{(1)}$,  there is a constant $c>0$ depending only on $\psi_0$ such that
\be
\langle h, K_{\psi_0} h\rangle_{S^{(1)}} \geq c \| h\|_{L^2(D_0)}^2, \qquad \forall h\in S_0^{(1)}.
\ee
We now design the topology.
 First, we equip $S^{(0)}$ with the $L^2(D_0)$ topology:
\be
\langle f,g\rangle_{S^{(0)}} = \int_{D_0} f g.
\ee
Note that, using orthogonality of the projection \eqref{ortho}, by  Hypothesis  {\rm (\textbf{H1}$^\prime$)} we have
\begin{align}\label{coersive}
\langle h, K_{\psi_0} h\rangle_{S^{(0)}} :=\langle h, \mathbb{P}_{\psi_0} (L_0 - \Lambda)^{-1}_{\rm hbc}h \rangle_{S^{(0)}} &= \langle h, (L_0 - \Lambda)^{-1}_{\rm hbc}h \rangle_{L^2} \geq c_0\| h\|_{\dot{H}^{-1}}^2\geq 0
\end{align}
for some $c_0>0$
where $ \| h\|_{\dot{H}^{-1}}= \|\nabla g\|_{L^2}$ where $\Delta g = h$ and $g=0$ at the boundary.

Now let $\partial_{\psi_0} = \frac{\nabla \psi_0}{|\nabla \psi_0|^2} \cdot \nabla$. Recalling for any $h\in S^k$ there exists  $H\in H^k(I)$ such that $h=H\circ \psi_0$, we note that $\partial_{\psi_0} h= H'(\psi_0)$. Now let
\be
\langle f,g\rangle_{S^{(1)}} =  \langle\partial_{\psi_0}  f,\partial_{\psi_0} g\rangle_{S^{(0)}} + M\langle f,g\rangle_{S^{(0)}}.
\ee
for some large constant $M$ to be specified shortly. This topology is obviously equivalent to that of $H^1$ on $S^1$.  To see this, denoting $\hat{x}= x/|x|$, we can write $\nabla =\widehat{\nabla^{\perp}\psi_0} \widehat{\nabla^{\perp}\psi_0} \cdot \nabla +   \widehat{\nabla\psi_0}  \widehat{\nabla\psi_0} \cdot \nabla$ and notice that on any $f\in S^1$, $\nabla f = {\nabla\psi_0}   \partial_{\psi_0} f$.
 Now note that, since $h=0$ and $ \mathbb{P}_{\psi_0}  (L_0 - \Lambda)^{-1}_{\rm hbc}h=0$ on the boundary, we have
\begin{align}
\langle h, \mathbb{P}_{\psi_0} (L_0 - \Lambda)^{-1}_{\rm hbc}h \rangle_{S^{(1)}} &=\langle \partial_{\psi_0} h, \partial_{\psi_0} \mathbb{P}_{\psi_0} (L_0 - \Lambda)^{-1}_{\rm hbc}h \rangle_{S^{(0)}}+ M \langle h, \mathbb{P}_{\psi_0} (L_0 - \Lambda)^{-1}_{\rm hbc}h \rangle_{S^{(0)}}\\
&=-\langle \partial_{\psi_0}^2 h, \mathbb{P}_{\psi_0} (L_0 - \Lambda)^{-1}_{\rm hbc}h \rangle_{S^{(0)}}+ M \langle h, (L_0 - \Lambda)^{-1}_{\rm hbc}h \rangle_{S^{(0)}}\\
&=\langle \partial_{\psi_0} h, \partial_{\psi_0} (L_0 - \Lambda)^{-1}_{\rm hbc}h \rangle_{S^{(0)}}+ M \langle h,  (L_0 - \Lambda)^{-1}_{\rm hbc}h \rangle_{S^{(0)}}.
\end{align}
{In the above we repeatedly used that $\mathbb{P}_{\psi_0}$ is an orthogonal projection on $L^2$ so that $\langle f, \mathbb{P}_{\psi_0} g\rangle_{S^{(0)}}=\langle \mathbb{P}_{\psi_0} f, g\rangle_{S^{(0)}}$. Thus, when paired with functions only of $\psi_0$ such as  $h$ or $\partial_{\psi_0}^2 h$, the projector is the identity.}  Introducing $f=(L_0 - \Lambda)^{-1}_{\rm hbc}h$, we have
\begin{align}
\langle   \partial_{\psi_0} h,   \partial_{\psi_0} (L_0 - \Lambda)^{-1}_{\rm hbc}h \rangle_{S^{(0)}} &= \langle \partial_{\psi_0} (L_0 - \Lambda) f ,   \partial_{\psi_0} f \rangle_{S^{(0)}} =- \langle  (L_0 - \Lambda) f ,   \partial_{\psi_0}^2 f \rangle_{S^{(0)}}
\end{align}
since $ (L_0 - \Lambda) f=h$ which is zero at the boundary.  Now, by (c.f. \S 7.2, pg 390 of \cite{E10}) we have
\be
\langle  (L_0 - \Lambda) f ,   \partial_{\psi_0}^2 f \rangle_{S^{(0)}} \geq\beta  \| f\|_{H^2}^2 -  \gamma \| f\|_{L^2}^2,
\ee
for some constants $\beta,\gamma>0$.
Moreover, by our hypotheses, we have for some constant $c>0$ that
\be
 c \|  f\|_{H^1}^2\leq \langle   (L_0 - \Lambda)   f , f \rangle_{S^{(0)}}.
\ee
Combining the above bounds we obtain
\begin{align}
\langle h, \mathbb{P}_{\psi_0} (L_0 - \Lambda)^{-1}_{\rm hbc}h \rangle_{S^{(1)}} & \geq \beta \| f\|_{H^2}^2 - \gamma \| f\|_{L^2}^2+  cM \|  f\|_{H^1}^2.
\end{align}
 It follows that by choosing $M$ sufficiently large that for some $c_1>0$ depending only on $\psi_0$ we have
\begin{align}
\langle h, \mathbb{P}_{\psi_0} (L_0 - \Lambda)^{-1}_{\rm hbc}h \rangle_{S^{(1)}} &\geq c_1\| f\|_{H^2}^2
\end{align}
and we deduce $\langle h, \mathbb{P}_{\psi_0} (L_0 - \Lambda)^{-1}_{\rm hbc}h \rangle_{S^{(1)}}$ is coercive.

Thus we have established that for all $g\in S_0^{(1)}$, there exists a unique $u\in S^{(-1)}$ such that
\be
K_{\psi_0} [u] = g.
\ee
Now we want to show that for $k\geq 1$,  if $g\in S_0^{(k)}$, then  $u\in S^{(k-2)}$.  Let  $g\in S_0^{(2)}$.  We know there is a solution $u\in S_0^{(-1)}$.  We wish to show that actually   $u\in S_0^{(0)}$.  To see this, we formally differentiate:
\be
 \partial_{\psi_0} K_{\psi_0} [u] = \partial_{\psi_0} g.
\ee
The following formal apriori calculation can be made rigorous by an approximation argument.
We compute the commutator of derivative with $K_{\psi_0} $:
\begin{align}
[ \partial_{\psi_0}, K_{\psi_0}] f &=  \partial_{\psi_0}  K_{\psi_0}[f] -   K_{\psi_0}[ \partial_{\psi_0}f] \\
&=  \partial_{\psi_0}  \mathbb{P}_{\psi_0} (L_0 - \Lambda)^{-1}_{\rm hbc} f-   \mathbb{P}_{\psi_0} (L_0 - \Lambda)^{-1}_{\rm hbc} \partial_{\psi_0} f\\
&=  \mathbb{P}_{\psi_0}  \partial_{\psi_0}  (L_0 - \Lambda)^{-1}_{\rm hbc} f-   \mathbb{P}_{\psi_0} (L_0 - \Lambda)^{-1}_{\rm hbc} \partial_{\psi_0} f +  [\mathbb{P}_{\psi_0},  \partial_{\psi_0}]  (L_0 - \Lambda)^{-1}_{\rm hbc} f\\
&=  \mathbb{P}_{\psi_0}  [\partial_{\psi_0},  (L_0 - \Lambda)^{-1}_{\rm hbc}] f +  [\mathbb{P}_{\psi_0},  \partial_{\psi_0}]  (L_0 - \Lambda)^{-1}_{\rm hbc} f.
\end{align}
Now note that by Lemma \ref{geolem} we have
\be
[\mathbb{P}_{\psi_0},  \partial_{\psi_0}]  g = -\mathbb{P}_{\psi_0} \Big[ \left(\frac{\mu'}{\mu^2} +  \frac{\Delta\psi_0- 2 \kappa |\nabla \psi_0|}{|\nabla \psi_0|^2} \right) g  \Big].
\ee
Thus,  commutator of derivative with streamline projector is of zero order:
\be
\|[\mathbb{P}_{\psi_0},  \partial_{\psi_0}]  g\|_{L^2} \leq C \|g\|_{L^2}.
\ee
Also the commutator of derivative and the inverse operator is zero smoothing of degree -2. Specifically, note that with $f= (L_0 - \Lambda)^{-1}_{\rm hbc} g$ we have
\begin{align}
[\partial_{\psi_0},  (L_0 - \Lambda)^{-1}_{\rm hbc}] g  &= \partial_{\psi_0} f-  (L_0 - \Lambda)^{-1}_{\rm hbc} \partial_{\psi_0} (L_0 - \Lambda)f\\
&=   (L_0 - \Lambda)^{-1}_{\rm hbc} [\partial_{\psi_0},L_0 - \Lambda] f.
\end{align}
The commutator $ [\partial_{\psi_0},L_0 - \Lambda]$ is a differential operator of order 2. Thus we obtain
\be
\| [\partial_{\psi_0},  (L_0 - \Lambda)^{-1}_{\rm hbc}] g \|_{L^2} \leq  C \| (L_0 - \Lambda)^{-1}_{\rm hbc} g\|_{L^2},
\ee
and we obtain the estimate
\be
\|[ \partial_{\psi_0}, K_{\psi_0}] f \|_{L^2} \leq C  \| (L_0 - \Lambda)^{-1}_{\rm hbc} f\|_{L^2}.
\ee
Moreover, $[ \partial_{\psi_0}, K_{\psi_0}] f$ is zero on the boundary.
Then if $u\in S^{(-1)}$ and $g\in S_0^{(2)}$
\be
 K_{\psi_0} [ \partial_{\psi_0} u]   = [ \partial_{\psi_0}, K_{\psi_0}]u+  \partial_{\psi_0} g\in S_0^{(1)}.
\ee
It follows that $ \partial_{\psi_0} u\in S^{(-1)}$ and thus $  u\in S^{(0)}$.  Higher regularity follows by similar arguments.

\section{Proof of Proposition \ref{pset}}
\label{changeofcoords}

Let $D_0, D$ be two nearby domains and let $\gamma:D_0 \to D$ be a diffeomorphism.
Denote points in $D$ by $(x_1,x_2)$ and points in $D_0$ by $(y_1,y_2)$.
Decompose the diffeomorphism
\be
\gamma ={\rm id} + (\alpha, \beta) =
{\rm id } + \nabla^\perp \phi + \nabla \eta.
\ee
where $\nabla^\perp = (-\partial_2, \partial_1)$.
Let $\rho = \det \nabla \gamma$ and
$0<|\rho| < \infty$.
Write
$\psi = \psi_0\circ \gamma^{-1}$.  More explicitly
\be
\xx = \yx+ \alpha (\yx, \yy), \qquad \xy = \yy+ \beta (\yx, \yy).
\ee
and
\be
\alpha (\yx, \yy) = - \pa_\yy \phi +\pa_\yx \eta, \qquad \beta (\yx, \yy) =  \pa_\yx \phi +\pa_\yy \eta.
\ee
We have so $\nabla_y \psi_0 = \nabla \gamma \cdot (\nabla_x \psi) \circ \gamma, $ and
\begin{align}
(\nabla_x \psi) \circ \gamma &= ( \nabla \gamma )^{-1}\cdot  \nabla_y \psi_0, \qquad  \nabla \gamma =I +  \begin{pmatrix} \pa_\yx \alpha  &  \pa_\yx \beta    \\  \pa_\yy \alpha & \pa_\yy \beta    \end{pmatrix}.
\end{align}

 \noindent \textbf{Jacobian of Transformation:}
 Note that, with $\rho = \det \nabla \gamma$ we find
\begin{align}
\rho &= 1 + \pa_\yx \alpha  +\pa_\yy \beta +  ( \pa_\yx \alpha  \pa_\yy \beta   -   \pa_\yy \alpha   \pa_\yx \beta   )= 1 + \Delta \eta  - \sN_\eta(\partial^2 \eta, \partial^2\phi)\label{etaeq}
\end{align}
where
\be\label{Neta}
\sN_\eta :=  \pa_\yx \alpha  \pa_\yy \beta   -   \pa_\yy \alpha   \pa_\yx \beta =  - \nabla  \alpha  \cdot  \nabla^\perp \beta.
\ee \\
\vspace{-2mm}
 \noindent \textbf{Inverse Gradient:}
A useful expression for the inverse gradient of the transformation is

\begin{align}
( \nabla \gamma )^{-1}
&= \frac{1}{\rho}\left( I + \begin{pmatrix} \pa_\yy \beta &-  \pa_\yx \beta    \\ - \pa_\yy \alpha &  \pa_\yx \alpha    \end{pmatrix}\right)=  \frac{1}{\rho}\left(I+ \begin{pmatrix} \pa_\yy  \pa_\yx \phi +\pa_\yy^2 \eta &-   \pa_\yx^2 \phi - \pa_\yx\pa_\yy \eta    \\
\pa_\yy^2 \phi -\pa_\yx \pa_\yy  \eta &  - \pa_\yy\pa_\yx  \phi +\pa_\yx^2 \eta    \end{pmatrix} \right)\\
& = \frac{1}{\rho}\left(I+ \begin{pmatrix} \pa_\yy  \pa_\yx \phi &-   \pa_\yx^2 \phi     \\
\pa_\yy^2 \phi&  - \pa_\yy\pa_\yx  \phi     \end{pmatrix}+ \begin{pmatrix} \pa_\yy^2 \eta &- \pa_\yx\pa_\yy \eta    \\
 -\pa_\yx \pa_\yy  \eta &   \pa_\yx^2 \eta    \end{pmatrix} \right)  \\
 & = \frac{1}{\rho}\left(I- \begin{pmatrix} \pa_\yx \nabla_y^\perp \phi    \\
 \pa_\yy \nabla_y^\perp \phi     \end{pmatrix}
 +\begin{pmatrix} -\pa_\yy \nabla_y^\perp \eta    \\
\pa_\yx \nabla_y^\perp \eta     \end{pmatrix} \right).
\end{align}\\

\vspace{-2mm}
 \noindent \textbf{Derivatives of $\psi:= \psi_0 \circ \gamma^{-1}$:}
In the above, $ \pa_\yx \nabla_y^\perp \phi $ and similar terms are understood as row vectors forming the matrices.
Thus
\begin{align}
(\nabla_x \psi)\circ \gamma
 & = \frac{1}{\rho}\left(\nabla_y \psi_0+   \nabla_y \partial_s \phi + (\nabla_y^\perp \phi \cdot \nabla_y) \nabla_y \psi_0
 \right)   - \frac{1}{\rho}\Big(  \nabla_y^\perp \partial_s \eta +  (\nabla_y \eta \cdot \nabla_y) \nabla_y \psi_0\Big),
\end{align}
where we introduced the notation for streamline derivatives $ \partial_s = \nabla^\perp\psi_0\cdot \nabla_y$.  In the future, we will bin terms involving $\eta$ in
\be\label{L0}
\sL_0(\partial^2 \eta,\rho;\partial^2\psi_0):=  - \frac{1}{\rho}\Big(  \nabla_y^\perp \partial_s \eta +  (\nabla_y \eta \cdot \nabla_y) \nabla_y \psi_0\Big).
\ee
Note we track only the highest number derivatives in the notation on the left.
We now obtain a formula for the Hessian in terms of $\psi_0$ and the diffeomorphism.  First note that
\be
 (\nabla_x \otimes \nabla_x \psi)\circ \gamma = (\nabla_y\gamma)^{-1} \nabla_y \Big((\nabla_x \psi)\circ \gamma \Big).
\ee
The right-hand-side of the above is calculated as
\begin{align}
\nabla_y \Big((\nabla_x \psi)\circ \gamma \Big)&= \rho \nabla_y \rho^{-1}\otimes ( \nabla_x \psi)\circ \gamma\\
&\qquad  + \frac{1}{\rho}\Bigg(\nabla_y \otimes \nabla_y \psi_0+(\nabla_y \otimes  \nabla_y) \partial_s \phi + (\nabla_y \otimes \nabla_y^\perp \phi) (\nabla_y\otimes  \nabla_y \psi_0)\\
&\qquad\qquad\qquad  + ( \nabla_y^\perp \phi \cdot \nabla)\nabla_y\otimes \nabla_y \psi_0
 -(\nabla_y\otimes \nabla_y^\perp) \partial_s \eta\\
 & \qquad\qquad\qquad\qquad -  (\nabla_y \otimes \nabla_y \eta ) (\nabla_y\otimes \nabla_y \psi_0) - (\nabla_y \eta \cdot \nabla) \nabla_y\otimes \nabla_y \psi_0 \Bigg).
\end{align}
Thus we obtain
\begin{align}
 (\nabla_x \otimes \nabla_x \psi)\circ \gamma
&=  \frac{1}{\rho^2}\nabla_y \otimes \nabla_y \psi_0\ + \frac{1}{\rho^2}\Bigg((\nabla_y \otimes  \nabla_y) \partial_s \phi -( \nabla_y \phi \cdot \nabla^\perp)\nabla_y\otimes \nabla_y \psi_0\Bigg) \\
&\qquad + \sL_1(\partial^3 \eta,\partial \rho;\partial^3 \psi_0)+  \sN_1(\partial^2\phi,\partial^2\partial_s \phi,\partial^3 \eta,\pa \rho;\partial^3 \psi_0),
\end{align}
where we have grouped the terms linear in $\eta$ and $\nabla \rho$
\begin{align}
\sL_1(\partial^3 \eta,\partial \rho;\partial^3 \psi_0)&:=   - \frac{1}{\rho^2}\Bigg( \nabla_y \rho \otimes ( \nabla_x \psi_0)\circ \gamma  +(\nabla_y\otimes \nabla_y^\perp) \partial_s \eta + (\nabla_y \eta \cdot \nabla) \nabla_y\otimes \nabla_y \psi_0\Bigg),\label{Ldef}
\end{align}
as well as
all terms which are quadratic in combinations of $(\phi,\eta, \nabla \rho)$:
\begin{align}
\sN_1(\partial^2\phi,&\partial^2\partial_s \phi,\partial^3 \eta,\pa \rho;\partial^3 \psi_0)\\
&:= -\frac{1}{\rho^2} \nabla_y \rho\otimes   \left(\nabla_y \partial_s \phi + (\nabla_y^\perp \phi \cdot \nabla_y) \nabla_y \psi_0
 -  \nabla_y^\perp \partial_s \eta -  (\nabla_y \eta \cdot \nabla_y) \nabla_y \psi_0 \right)  \\
&\qquad +\frac{1}{\rho^2} \left(- \begin{pmatrix} \pa_\yx \nabla_y^\perp \phi    \\
 \pa_\yy \nabla_y^\perp \phi     \end{pmatrix}
 +\begin{pmatrix} -\pa_\yy \nabla_y^\perp \eta    \\
\pa_\yx \nabla_y^\perp \eta     \end{pmatrix} \right)\Bigg[ -  \nabla_y \rho \otimes ( \nabla_x \psi_0)\circ \gamma +  \\
&\qquad -  \nabla_y \rho\otimes \Big(  \nabla_y \partial_s \phi + (\nabla_y^\perp \phi \cdot \nabla_y) \nabla_y \psi_0
 -  \nabla_y^\perp \partial_s \eta -  (\nabla_y \eta \cdot \nabla_y) \nabla_y \psi_0\Big) \\
&\qquad + \Bigg((\nabla_y \otimes  \nabla_y) \partial_s \phi + (\nabla_y \otimes \nabla_y^\perp \phi) (\nabla_y\otimes  \nabla_y \psi_0)+ ( \nabla_y^\perp \phi \cdot \nabla)\nabla_y\otimes \nabla_y \psi_0
\\
 & \qquad \  \ \  -(\nabla_y\otimes \nabla_y^\perp) \partial_s \eta-  (\nabla_y \otimes \nabla_y \eta ) (\nabla_y\otimes \nabla_y \psi_0) - (\nabla_y \eta \cdot \nabla) \nabla_y\otimes \nabla_y \psi_0 \Bigg)\Bigg].\label{Ndef}
\end{align}
We note the important point the nonlinearity involves third derivatives of $\phi$ only through $\partial^2\partial_s \phi$.
We now introduce stream function coordinates.
Note the following formula for
$\nabla_y$ in terms of derivative along and transverse to streamlines, i.e.
$\partial_s = \nabla^\perp \psi_0\cdot\nabla$ and $\partial_{\psi_0} = \nabla\psi_0\cdot
\nabla$,
\be
\nabla_y =  \frac{1}{|\nabla \psi_0|^2} \Big[\nabla \psi_0 \partial_{\psi_0} +\nabla^\perp \psi_0 \partial_s \Big], \qquad \nabla_y^\perp=  \frac{1}{|\nabla \psi_0|^2} \Big[\nabla^\perp \psi_0 \partial_{\psi_0} -\nabla \psi_0 \partial_s \Big].
\ee
With this, we have
\begin{align}
 \nabla_y \phi \cdot \nabla^\perp 
 &=\frac{1}{|\nabla \psi_0|^2} \Big(( \partial_s \phi) \partial_{\psi_0} - (\partial_{\psi_0} \phi) \partial_s \Big).
\end{align}
We arrive at
\begin{lemma} The following formulae hold
\begin{align}
(\nabla_x \psi)\circ \gamma
 & =\frac{1}{\rho}\nabla_y \psi_0 +  \frac{1}{\rho}\Bigg(  \nabla_y \partial_s \phi + \frac{1}{|\nabla \psi_0|^2} \Big[( \partial_s \phi) \partial_{\psi_0} - (\partial_{\psi_0} \phi) \partial_s \Big] \nabla_y \psi_0 \Bigg)+  \sL_0(\partial^3 \eta, \rho;\partial^3 \psi_0),\\
 (\nabla_x \otimes \nabla_x \psi)\circ \gamma&=  \frac{1}{\rho^2}\nabla_y \otimes \nabla_y \psi_0  -\frac{1}{\rho^2 |\nabla \psi_0|^2}  \Big[( \partial_s \phi) \partial_{\psi_0} - (\partial_{\psi_0} \phi) \partial_s  \Big]\nabla_y\otimes \nabla_y \psi_0\\
 &\qquad  + \frac{1}{\rho^2} (\nabla_y \otimes  \nabla_y) \partial_s \phi   + \sL_1(\partial^3 \eta,\partial \rho;\partial^3 \psi_0)+   \sN_1(\partial^2\phi,\partial^2\partial_s \phi,\partial^3 \eta,\pa \rho;\partial^3 \psi_0),
\end{align}
where $\sL_0$, $\sL_1$ and $\sN_1$ are defined by \eqref{L0}, \eqref{Ldef} and \eqref{Ndef}.  Let $a_\gamma = a\circ \gamma$, $b_\gamma= b\circ \gamma$ and
\be
 L:= a: \nabla_x\otimes \nabla_x + b\cdot \nabla_x, \qquad   L_\gamma = a_\gamma: \nabla_y\otimes \nabla_y + b_\gamma \cdot \nabla_y.
\ee
Then we have
\begin{align}
(  L \psi )\circ \gamma &= \frac{1}{\rho^2} L_\gamma \psi_0 +  \frac{1}{\rho^2} L_\gamma \partial_s \phi -\frac{1}{\rho^2 |\nabla \psi_0|^2}\Big[( \partial_s \phi) \partial_{\psi_0} - (\partial_{\psi_0} \phi) \partial_s  \Big]L_\gamma \psi_0\\
&\qquad + \frac{1}{\rho^2 |\nabla \psi_0|^2}\Bigg( \Big[( \partial_s \phi) \partial_{\psi_0}a_\gamma - (\partial_{\psi_0} \phi) \partial_s a_\gamma \Big]: \nabla_y\otimes \nabla_y  \psi_0+ \Big[( \partial_s \phi) \partial_{\psi_0}b_\gamma - (\partial_{\psi_0} \phi) \partial_s b_\gamma \Big]\cdot  \nabla_y  \psi_0\Bigg)\\
&\qquad \qquad+a: \sL_1(\partial^3 \eta,\partial \rho;\partial^3 \psi_0)+  a: \sN_1(\partial^2\phi,\partial^2\partial_s \phi,\partial^3 \eta,\pa \rho;\partial^3 \psi_0) + b \cdot\sL_0(\partial^3 \eta,\partial \rho;\partial^3 \psi_0).
\end{align}
\end{lemma}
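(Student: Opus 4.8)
The plan is to assemble the three identities directly from the change-of-variables computations already carried out above, the only genuinely new ingredient being the passage to the streamline frame. First I would record the elementary frame identity. Writing $\nabla_y = \frac{1}{|\nabla\psi_0|^2}\big(\nabla\psi_0\,\pa_{\psi_0} + \nabla^\perp\psi_0\,\pa_s\big)$ together with the analogous formula for $\nabla_y^\perp$, a one-line algebraic computation (using $\pa_s = \nabla^\perp\psi_0\cdot\nabla$, $\pa_{\psi_0} = \nabla\psi_0\cdot\nabla$ and the cancellation of the off-diagonal $\nabla\psi_0\otimes\nabla^\perp\psi_0$ contributions) gives
\begin{equation}
\nabla_y\phi\cdot\nabla^\perp = \frac{1}{|\nabla\psi_0|^2}\big[(\pa_s\phi)\,\pa_{\psi_0} - (\pa_{\psi_0}\phi)\,\pa_s\big].
\end{equation}
Substituting this identity (and noting $\nabla_y^\perp\phi\cdot\nabla_y = -\,\nabla_y\phi\cdot\nabla^\perp$) into the already-derived expressions for $(\nabla_x\psi)\circ\gamma$ and $(\nabla_x\otimes\nabla_x\psi)\circ\gamma$ yields the first two displayed formulae, with all contributions involving $\eta$ and $\nabla\rho$ left bundled in $\sL_0$, $\sL_1$, $\sN_1$ exactly as defined in \eqref{L0}, \eqref{Ldef}, \eqref{Ndef}.

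For the third formula I would compose $L$ with $\gamma$ pointwise: since $L = a:\nabla_x\otimes\nabla_x + b\cdot\nabla_x$, one has $(L\psi)\circ\gamma = a_\gamma : (\nabla_x\otimes\nabla_x\psi)\circ\gamma + b_\gamma\cdot(\nabla_x\psi)\circ\gamma$ with $a_\gamma = a\circ\gamma$, $b_\gamma = b\circ\gamma$. Inserting the first two formulae, the terms carrying $\nabla_y\otimes\nabla_y\psi_0$ and $\nabla_y\psi_0$ alone collect into $\tfrac{1}{\rho^2}L_\gamma\psi_0$ (the $\tfrac{1}{\rho}$ versus $\tfrac{1}{\rho^2}$ discrepancy between the gradient and Hessian prefactors produces only an $O(\rho-1)$ correction, linear in the deformation, which is absorbed into the remainder), while the pure $\pa_s\phi$ contributions collect into $\tfrac{1}{\rho^2}L_\gamma\pa_s\phi$.

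The one manipulation requiring care is the cross term. Writing $V := (\pa_s\phi)\,\pa_{\psi_0} - (\pa_{\psi_0}\phi)\,\pa_s$, which is a first-order derivation, the Hessian contributes $a_\gamma : V\big(\nabla_y\otimes\nabla_y\psi_0\big)$ and the gradient the analogous $b_\gamma\cdot V(\nabla_y\psi_0)$. Since $V$ obeys the Leibniz rule I would commute the coefficients through,
\begin{equation}
a_\gamma : V\big(\nabla_y\otimes\nabla_y\psi_0\big) = V\big(a_\gamma:\nabla_y\otimes\nabla_y\psi_0\big) - (V a_\gamma):\nabla_y\otimes\nabla_y\psi_0,
\end{equation}
and likewise for the $b$ term. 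Summing the two $V(\cdots)$ pieces reconstitutes $V(L_\gamma\psi_0) = \big[(\pa_s\phi)\,\pa_{\psi_0} - (\pa_{\psi_0}\phi)\,\pa_s\big]L_\gamma\psi_0$, which, carrying the factor $-\tfrac{1}{\rho^2|\nabla\psi_0|^2}$, is the third displayed term; the leftover pieces $(Va_\gamma):\nabla_y\otimes\nabla_y\psi_0$ and $(Vb_\gamma)\cdot\nabla_y\psi_0$ are exactly the explicit fourth line. The residual $\eta$- and $\nabla\rho$-contributions pass through unchanged as $a:\sL_1 + a:\sN_1 + b\cdot\sL_0$.

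The main obstacle is purely one of bookkeeping rather than ideas: keeping the signs straight (in particular the sign flip between $\nabla_y^\perp\phi\cdot\nabla_y$ and $\nabla_y\phi\cdot\nabla^\perp$), correctly commuting the matrix $a_\gamma$ past the first-order streamline derivation $V$ via Leibniz, and tracking the $1/|\nabla\psi_0|^2$ and $1/\rho^{k}$ prefactors while checking that every prefactor mismatch is of lower order and may be folded into the linear remainder terms.
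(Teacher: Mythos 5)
Your proposal is correct and follows essentially the same route as the paper: the identity $\nabla_y\phi\cdot\nabla^\perp = \tfrac{1}{|\nabla\psi_0|^2}\big[(\pa_s\phi)\pa_{\psi_0}-(\pa_{\psi_0}\phi)\pa_s\big]$ substituted into the previously derived expressions for $(\nabla_x\psi)\circ\gamma$ and $(\nabla_x\otimes\nabla_x\psi)\circ\gamma$, followed by contraction with $a_\gamma, b_\gamma$ and a Leibniz commutation of the first-order derivation $V$ past the coefficients, which is exactly how the explicit $\big[(\pa_s\phi)\pa_{\psi_0}a_\gamma-(\pa_{\psi_0}\phi)\pa_s a_\gamma\big]$ terms arise in the paper. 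Your bookkeeping remarks (the sign flip between $\nabla_y^\perp\phi\cdot\nabla_y$ and $\nabla_y\phi\cdot\nabla^\perp$, and folding the $1/\rho$ versus $1/\rho^2$ mismatch on the first-order part into the linear remainder) match the paper's treatment.
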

We now simplify these formulae in the setting where $\psi_0$ and $\psi$ satisfy \eqref{modelpbm}, \eqref{modelpbmdef}.  Recall
$
 L_0:= a_0: \nabla_y\otimes \nabla_y + b_0\cdot \nabla_y,
$
so that
\begin{align}
L_\gamma- L_0 &=  (a_\gamma -a_0): \nabla_y\otimes \nabla_y + (b_\gamma - b_0)\cdot \nabla_y\\
&=   (a-a_0)_\gamma: \nabla_y\otimes \nabla_y + (b-b_0)_\gamma \cdot \nabla_y + ((a_0)_\gamma -a_0): \nabla_y\otimes \nabla_y + ((b_0)_\gamma - b_0)\cdot \nabla_y.
\end{align}
We now denote the nonlinearities arising in expanding by
\be
\sR_{a_0}(\partial \phi, \partial \eta, \partial^2 a_0) :=(a_0)_\gamma -a_0 - (\gamma-y) \cdot \nabla a_0,
\qquad
 \sR_{b_0}(\partial \phi, \partial \eta, \partial^2 b_0) :=(b_0)_\gamma -b_0 - (\gamma-y) \cdot \nabla b_0.
\ee
Note that the dependences are a consequence of Taylor's formula.  Then
\begin{align}
L_\gamma- L_0 &=     (a-a_0)_\gamma: \nabla_y\otimes \nabla_y + (b-b_0)_\gamma \cdot \nabla_y\\
&\qquad +(\gamma- y) \cdot \nabla a_0: \nabla_y\otimes \nabla_y +  (\gamma- y) \cdot \nabla b_0\cdot \nabla_y\\
&\qquad\qquad + \sR_{a_0}(\partial \phi, \partial \eta, \partial^2 a_0)  : \nabla_y\otimes \nabla_y +  \sR_{b_0}(\partial \phi, \partial \eta, \partial^2 b_0)\cdot \nabla_y\\
&=    (a-a_0)_\gamma: \nabla_y\otimes \nabla_y  + (b-b_0)_\gamma \cdot \nabla_y\\
&\qquad +  \nabla^\perp \phi  \cdot \nabla a_0: \nabla_y\otimes \nabla_y +  \nabla^\perp \phi  \cdot \nabla  b_0\cdot \nabla_y\\
&\qquad\qquad +\nabla \eta \cdot \nabla a_0: \nabla_y\otimes \nabla_y +    \nabla \eta\cdot \nabla b_0\cdot \nabla_y\\
&\qquad\qquad \qquad+ \sR_{a_0}(\partial \phi, \partial \eta, \partial^2 a_0)  : \nabla_y\otimes \nabla_y +  \sR_{b_0}(\partial \phi, \partial \eta, \partial^2 b_0)\cdot \nabla_y.
\end{align}
Thus we have
\begin{align}
(L_\gamma- L_0 )\psi_0 &=  - \frac{1}{|\nabla \psi_0|^2} \left(( \partial_s \phi) \partial_{\psi_0}a_0 - (\partial_{\psi_0} \phi) \partial_sa_0 \right) : \nabla_y\otimes \nabla_y\psi_0 \\
&\qquad - \frac{1}{|\nabla \psi_0|^2} \left(( \partial_s \phi) \partial_{\psi_0}b_0 - (\partial_{\psi_0} \phi) \partial_sb_0 \right)\cdot \nabla_y\psi_0\\
&\qquad \qquad+  \sL_2(\delta a, \delta b,\partial a_0, \partial b_0, \partial \eta,\gamma; \partial^2 \psi_0) + \sN_2(\partial^2 a_0, \partial^2 b_0, \partial \phi, \partial \eta, \gamma; \partial^2 \psi_0),
\end{align}
where
\begin{align}
\sL_2(\delta a, \delta b,\partial a_0, \partial b_0, \partial \eta,\gamma; \partial^2 \psi_0) &:=   (a-a_0)_\gamma: \nabla_y\otimes \nabla_y \psi_0 + (b-b_0)_\gamma \cdot \nabla_y\psi_0\\
&\qquad\qquad +\nabla \eta \cdot \nabla a_0: \nabla_y\otimes \nabla_y\psi_0 +    \nabla \eta\cdot \nabla b_0\cdot \nabla_y\psi_0,\\
\sN_2(\partial^2 a_0, \partial^2 b_0, \partial \phi, \partial \eta,\gamma; \partial^2 \psi_0) &:= \sR_{a_0}(\partial \phi, \partial \eta, \partial^2 a_0)  : \nabla_y\otimes \nabla_y \psi_0+  \sR_{b_0}(\partial \phi, \partial \eta, \partial^2 b_0)\cdot \nabla_y\psi_0.
\end{align}
Additionally we find
\begin{align}
(L_\gamma- L_0 )\partial_s \phi &=  \sL_3(\delta a, \delta b,\partial a_0, \partial b_0, \partial \eta,\gamma, \partial^2 \partial_s\phi)  +\sN_3(\partial^2 a_0, \partial^2 b_0, \partial \phi, \partial \eta,\gamma, \partial^2 \partial_s \phi).
\end{align}
where
\begin{align}
\sL_3(\delta a, \delta b,\partial a_0, \partial b_0, \partial \eta,\gamma, \partial^2 \partial_s\phi) &:=   (a-a_0)_\gamma: \nabla_y\otimes \nabla_y \partial_s\phi + (b-b_0)_\gamma \cdot \nabla_y\partial_s\phi\\
\sN_3(\partial^2 a_0, \partial^2 b_0, \partial \phi, \partial \eta,\gamma, \partial^2 \partial_s \phi) &:=\nabla^\perp \phi  \cdot \nabla a_0: \nabla_y\otimes \nabla_y\partial_s \phi  +  \nabla^\perp \phi  \cdot \nabla  b_0\cdot \nabla_y\partial_s \phi \\
&\qquad +\nabla \eta \cdot \nabla a_0: \nabla_y\otimes \nabla_y\partial_s \phi  +    \nabla \eta\cdot \nabla b_0\cdot \nabla_y\partial_s \phi \\
&\qquad + \sR_{a_0}(\partial \phi, \partial \eta, \partial^2 a_0)  : \nabla_y\otimes \nabla_y\partial_s \phi +  \sR_{b_0}(\partial \phi, \partial \eta, \partial^2 b_0)\cdot \nabla_y\partial_s \phi .
\end{align}
Thus we have
\begin{align}
\rho^2 (  L \psi )\circ \gamma &=L_0 \psi_0 +  L_0 \partial_s \phi- \frac{1}{|\nabla \psi_0|^2} \left(( \partial_s \phi) \partial_{\psi_0}a_0 - (\partial_{\psi_0} \phi) \partial_sa_0 \right) : \nabla_y\otimes \nabla_y\psi_0 \\
& \ \  - \frac{1}{|\nabla \psi_0|^2} \left(( \partial_s \phi) \partial_{\psi_0}b_0 - (\partial_{\psi_0} \phi) \partial_sb_0 \right)\cdot \nabla_y\psi_0 -\frac{1}{ |\nabla \psi_0|^2}\Big[( \partial_s \phi) \partial_{\psi_0} - (\partial_{\psi_0} \phi) \partial_s  \Big]L_0 \psi_0\\
& \ \  + \frac{1}{ |\nabla \psi_0|^2} \Big[( \partial_s \phi) \partial_{\psi_0}a - (\partial_{\psi_0} \phi) \partial_s a \Big]: \nabla_y\otimes \nabla_y  \psi_0+ \frac{1}{ |\nabla \psi_0|^2} \Big[( \partial_s \phi) \partial_{\psi_0}b - (\partial_{\psi_0} \phi) \partial_s b \Big]\cdot  \nabla_y  \psi_0\\
& \qquad+\sL_5(\delta a, \delta b,\partial a_0, \partial b_0, \partial^3 \eta,\gamma, \partial^2 \partial_s\phi; \partial^3 \psi_0) + \sN_5 (\partial^2 a_0, \partial^2 b_0, \partial^2\phi, \partial^2 \eta,\partial\rho, \partial^2 \partial_s \phi; \partial^3 \psi_0)
\end{align}
where the linear and nonlinear terms  are
\be\label{LN5}
\sL_5= \sum_{i=0}^4\sL_i, \qquad \sN_5 = \sum_{i=1}^4\sN_i.
\ee
Rearranging this, we have
\begin{align}
\rho^2 (  L \psi )\circ \gamma &=L_0 \psi_0 +  (L_0 - \lambda_1) \partial_s \phi + \lambda_2 \partial_{\psi_0} \phi\\
&\qquad +\sL_5(\delta a, \delta b,\partial a_0, \partial b_0, \partial^3 \eta, \partial\rho, \partial^2 \partial_s\phi; \partial^3 \psi_0) + \sN_5(\partial^2 a_0, \partial^2 b_0, \partial^2\phi, \partial^2 \eta,\partial\rho, \partial^2 \partial_s \phi; \partial^3 \psi_0)
\end{align}
where
$
\lambda_1 := \frac{ \partial_{\psi_0} L_0 \psi_0}{ |\nabla \psi_0|^2}$ and $\lambda_2 :=  \frac{\partial_{s} L_0 \psi_0}{ |\nabla \psi_0|^2}$.
To get the desired equation for $\partial_s \phi$ we must use the equations that $\psi_0$ and $\psi$ satisfy.  Recall
\be
  L\psi = F(\psi) + G(x ,\psi), \qquad  L_0\psi =   F_0(\psi_0) + G_0(y ,\psi_0),
\ee
Then we have
\begin{align}
 (  L \psi )\circ \gamma -  F(\psi_0)+ G(\gamma ,\psi_0) = 0,
\end{align}
Thus upon substitution we obtain
\begin{align}
  (L_0 + \Lambda) \partial_s \phi  &=L_0 \psi_0 -\rho^2 (  L \psi )\circ \gamma   + \Lambda_2 \partial_{\psi_0} \phi +\sL_5+ \sN_5\\
  &=  F_0(\psi_0) -  F(\psi_0) + G_0(y ,\psi_0)- G(\gamma ,\psi_0)\\
  &\qquad    + \Lambda_2 \partial_{\psi_0} \phi +\sL_5+ \sN_5.
\end{align}
Introducing the notation
\begin{align}
 \sR_{G}(\partial \phi, \partial \eta, \partial^2_Y G) &:=G_0(y ,\psi_0)- G(\gamma ,\psi_0)- (\gamma-y) \cdot (\nabla_y G) \circ \gamma,
\end{align}
we further express the nonlinear  $G$ terms as follows
\begin{align}
G_0(y ,\psi_0)- G(\gamma ,\psi_0)&=   \frac{1}{|\nabla \psi_0|^2} \Big(( \partial_s \phi) (\partial_{\psi_0} G)(\gamma, \psi_0) - (\partial_{\psi_0} \phi) (\partial_s  G)(\gamma, \psi_0)\Big) \\
&\qquad- \nabla \eta \cdot (\nabla_y G)(\gamma, \psi_0)  + (G-G_0) (\gamma,\psi_0) +  \sR_{G}.\label{nonlinG}
\end{align}
Together, we obtain
\begin{align}
  (L_0 - \Lambda) \partial_s \phi  &= \Lambda_2 \partial_{\psi_0} \phi +(F-F_0)(\psi_0)  + \sL_\phi(\delta a, \delta b,\partial a_0, \partial b_0, \partial^3 \eta, \partial\rho, \partial^2 \partial_s\phi; \partial^3 \psi_0)\\
  &\qquad + \sN(\partial^2 a_0, \partial^2 b_0, \partial^2\phi, \partial^2 \eta,\partial\rho, \partial^2 \partial_s \phi; \partial^3 \psi_0)
\end{align}
where we have defined
\begin{align}\label{Lphi}
\sL_\phi&=\sL_5 ,\\
 \sN_\phi&= \sN_5+ \eqref{nonlinG} \label{Nphi}
\end{align}
where $\sL_5$ and $\sN_5$ are defined in \eqref{LN5} and where
\begin{align}
\Lambda_1 &:= \frac{1}{ |\nabla \psi_0|^2}\Big( \partial_{\psi_0} L_0 \psi_0-  \partial_{\psi_0} G(y,\psi_0) \Big), \qquad \Lambda_2 :=  \frac{1}{ |\nabla \psi_0|^2}\Big( \partial_{s} L_0 \psi_0 -  \partial_{s} G(y,\psi_0) \Big).
\end{align}
Note that we separate out $(F-F_0)(\psi_0)  $ since we will use $F$ to fix $ \partial_s \phi$ as mean-zero on streamlines during the construction.
We note now that
\begin{align}
\partial_{\psi_0} L_0 \psi_0 &=  |\nabla \psi_0|^2  F'_0(\psi_0) + \partial_{\psi_0} G + |\nabla \psi_0|^2 G_0'(y,\psi_0), \qquad \partial_{s} L_0 \psi_0 =  \partial_{s} G_0,
\end{align}
where $ G_0'$ denotes differentiation with respect to its $\psi_0$ argument. Thus,  introducing
\begin{align}\label{lambda}
\Lambda &:=F_0'(\psi_0) + G_0'(y,\psi_0) ,
\end{align}
we obtain
\begin{lemma} \label{philem} If $\psi_0$ solves  \eqref{modelpbm} and $\psi= \psi_0\circ \gamma$  solves \eqref{modelpbmdef} then $\partial_s \phi $ satisfies
\begin{align}\label{etaeq}
\Delta \eta&= \rho-1 +   \sN_\eta(\partial^2 \eta, \partial^2\phi),\\
  (L_0 - \Lambda) \partial_s \phi  &=(F-F_0)(\psi_0) + \sL_\phi(\delta a, \delta b,\delta F,  \partial\delta G, \partial a_0, \partial b_0, \partial^3 \eta, \partial\rho, \partial^2 \partial_s\phi, \partial_{\psi_0} \phi; \partial^3 \psi_0)\\
  &\qquad \qquad + \sN_\phi (\partial^2 a_0, \partial^2 b_0, \partial^2\phi, \partial^2 \eta,\partial\rho, \partial^2 \partial_s \phi; \partial^3 \psi_0), \label{finaleqn}
\end{align}
where $\Lambda$ is given by \eqref{lambda}, $\sL_\phi$ (defined by \eqref{Lphi}) are all the collected terms which are linear in $\phi$ and $\eta$ (and their derivatives), but all multiplied by small factors, $\sN_\eta$ is defined by \eqref{Neta} and $\sN_\phi$ collects the nonlinear terms above (defined by \eqref{Nphi}) .
\end{lemma}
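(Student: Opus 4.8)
The plan is to prove the Lemma as the endpoint of a direct change-of-variables computation: pull the equation \eqref{modelpbmdef} back from $D$ to $D_0$ along $\gamma$, expand everything in terms of $\phi$, $\eta$, $\psi_0$ and the coefficient data, and then sort the resulting expression into three piles, namely the leading operator $(L_0-\Lambda)\pa_s\phi$, a collection $\sL_\phi$ of terms that are linear in $(\phi,\eta)$ but each carry a small prefactor, and the genuinely quadratic remainder $\sN_\phi$. The organizing principle throughout is that any term not belonging to the leading structure must be either quadratic in $(\phi,\eta)$ or linear but multiplied by one of the small quantities $\delta a,\delta b,\delta F,\delta G$, $\rho-1$, or a derivative of $\phi$ or $\eta$.

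First I would dispatch the $\eta$-equation, which does not require the PDE at all, only $\rho=\det\nabla\gamma$. Writing $\gamma={\rm id}+(\alpha,\beta)$ with $\alpha=-\pa_{y_2}\phi+\pa_{y_1}\eta$ and $\beta=\pa_{y_1}\phi+\pa_{y_2}\eta$, one expands $\rho=1+(\pa_{y_1}\alpha+\pa_{y_2}\beta)+(\pa_{y_1}\alpha\,\pa_{y_2}\beta-\pa_{y_2}\alpha\,\pa_{y_1}\beta)$. The linear part collapses to $\Delta\eta$ because the $\phi$-contributions cancel by equality of mixed partials, while the quadratic part is exactly $-\nabla\alpha\cdot\nabla^\perp\beta=:\sN_\eta$; this is the first claimed identity. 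For the $\phi$-equation I would use $\psi=\psi_0\circ\gamma^{-1}$ to get $(\nabla_x\psi)\circ\gamma=(\nabla\gamma)^{-1}\nabla_y\psi_0$, iterate once for the Hessian, and then rewrite $\nabla_y$ in stream-function coordinates via $\nabla_y=|\nabla\psi_0|^{-2}\big(\nabla\psi_0\,\pa_{\psi_0}+\nabla^\perp\psi_0\,\pa_s\big)$. The key algebraic fact that drives this is $\nabla^\perp\phi\cdot\nabla\psi_0=\pa_s\phi$, which is precisely why the transported derivatives assemble into $\nabla_y\pa_s\phi$ and, after a second differentiation, into $\nabla_y\otimes\nabla_y\,\pa_s\phi$, so that $(L\psi)\circ\gamma$ reorganizes as $\tfrac{1}{\rho^2}\big(L_0\psi_0+L_0\pa_s\phi+\cdots\big)$.

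The remaining work is to invoke both PDEs and expand the coefficients. I would subtract the base equation $L_0\psi_0=F_0(\psi_0)+G_0(y,\psi_0)$ and insert $(L\psi)\circ\gamma=F(\psi_0)+G(\gamma,\psi_0)$, which is legitimate since $\psi\circ\gamma=\psi_0$; the $F$-terms then combine to the clean source $(F-F_0)(\psi_0)$, which is deliberately kept separate because $F$ is the free function later used to enforce the mean-zero-on-streamlines solvability condition. Taylor-expanding the transported coefficients $(a_0)_\gamma,(b_0)_\gamma$ and $G(\gamma,\psi_0)$ about the identity map (with the remainders $\sR_{a_0},\sR_{b_0},\sR_G$ absorbing the quadratic tails) and using the identities $\pa_{\psi_0}L_0\psi_0=|\nabla\psi_0|^2\big(F_0'(\psi_0)+G_0'(y,\psi_0)\big)+\pa_{\psi_0}G$ and $\pa_s L_0\psi_0=\pa_s G_0$, the coefficient of $\pa_s\phi$ sharpens into exactly $\Lambda=F_0'(\psi_0)+G_0'(y,\psi_0)$, while the coefficient $\Lambda_2$ of the $\pa_{\psi_0}\phi$ term is small because it is proportional to $\pa_s G_0-\pa_s G$. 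Every surviving linear-in-$(\phi,\eta)$ term is then seen to carry such a small factor and is swept into $\sL_\phi$, with the products of two small quantities going into $\sN_\phi$.

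I expect the true difficulty to be not any single estimate but the bookkeeping needed to verify the structural assertion that $\sN_\phi$ (and $\sL_\phi$) depend on third derivatives of $\phi$ \emph{only} through the combination $\pa^2\pa_s\phi$, never through a bare $\pa^3\phi$. This is the crucial point: it is what renders the final equation a non-degenerate second-order elliptic problem for the single scalar $\pa_s\phi$, which is exactly what the contraction argument behind Theorem \ref{thm1} needs in order not to lose derivatives. Securing it amounts to checking that each third-order $\phi$-derivative produced by the Hessian chain rule arises from differentiating the already-formed quantity $\nabla_y\pa_s\phi$, rather than from twice-differentiating a loose $\nabla^\perp\phi$; the cancellation $\nabla^\perp\phi\cdot\nabla\psi_0=\pa_s\phi$ noted above is what guarantees this, and I would track it carefully through the formula for $(\nabla_x\otimes\nabla_x\psi)\circ\gamma$ to confirm that all $\pa^3\phi$ contributions indeed package into $\nabla_y\otimes\nabla_y\,\pa_s\phi$.
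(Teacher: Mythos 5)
Your proposal follows essentially the same route as the paper's proof in Appendix B: expand $\det\nabla\gamma$ to get the $\eta$-equation, pull back $L\psi$ via $(\nabla\gamma)^{-1}$, pass to streamline coordinates so the transported derivatives assemble into $\nabla_y\pa_s\phi$ and $\nabla_y\otimes\nabla_y\,\pa_s\phi$, subtract the two PDEs, Taylor-expand the coefficients and $G$, and sort the remainder into small-linear ($\sL_\phi$) and quadratic ($\sN_\phi$) pieces — including the correct observations that $\Lambda_2$ is small because $\pa_s L_0\psi_0=\pa_s G_0$ and that third derivatives of $\phi$ appear only through $\pa^2\pa_s\phi$. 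The only quibble is a sign: the identity driving the bookkeeping is $\nabla^\perp\phi\cdot\nabla\psi_0=-\pa_s\phi$ (with the paper's convention $\pa_s=\nabla^\perp\psi_0\cdot\nabla$), not $+\pa_s\phi$, but this does not affect the structure of the argument.
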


\section{Proof of Theorem \ref{thm1}}

\subsection{Perturbative Assumptions}\label{smallness}

We will make the following assumptions that ensure that various quantities
we will encounter can be treated perturbatively.
\begin{itemize}
 \item The density $\rho$ satisfies
 \begin{equation}
  \|\rho-1\|_{C^{k,\alpha}(D_0)} \leq \epsilon_1.
  \label{assump1}
 \end{equation}

 \item The boundary $\pa D$ is given by $\{B = 0\}$
 and $\pa D_0$ is given by $\{B_0 = 0\}$ where $B, B_0$ are smooth
 functions defined in a neighborhood of $\pa D_0$ and
 \begin{equation}
  \|B - B_0\|_{C^{k,\alpha}} \leq \epsilon_2.
  \label{assump2}
 \end{equation}

 \item The operators $L_0, L$ are close in the sense that the coefficients
 satisfy
 \begin{equation}
  \|a - a_0\|_{C^{k,\alpha}(D_0)} + \|b - b_0\|_{C^{k,\alpha}(D_0)}
  \leq \epsilon_3.
  \label{assump3}
 \end{equation}
  \item The nonlinearities/forcings are close in the sense that
 \begin{equation}
 \|G - G_0\|_{C^{k-2,\alpha}(D_0)}
  \leq \epsilon_4.
  \label{assump4}
 \end{equation}
\end{itemize}
The size of the parameters $\epsilon_1, \epsilon_2, \epsilon_3,\epsilon_4$ will be
set in Lemma \ref{uniformbds} and depends on $D_0$, the base solution $\psi_0$,
and the operator $L_0$.

\subsection{Boundary conditions}
Suppose $D_0$ is given as the interior of a Jordan curve $B_0$ in $\R^2,$
\begin{equation}
 \pa D_0 = \{ p \in \R^2 \  |\  B_0(p) = 0\}.
 \label{}
\end{equation}
For convenience we will assume, without loss of generality, that $B_0$ is given so that $|\nabla B_0| = 1$
and $\nabla \psi_0\cdot \nabla B_0 > 0$.
Suppose that $D$ is given as the interior of a Jordan curve $B$,
\begin{equation}
 \pa D = \{p\in \R^2\ | \ B(p) = 0\}.
 \label{}
\end{equation}
If $\gamma:D_0 \to D$ is of the form
$\gamma = {\rm id} + (\alpha, \beta)$, then
using that $B_0|_{\pa D_0} = 0$, the requirement that $\gamma:\pa D_0 \to\pa D$
can be written as
\begin{align}
 0 = B\circ \gamma|_{\partial D_0}  &= B_0\circ \gamma|_{\partial D_0} + ( \delta B)\circ \gamma|_{\partial D_0} \\
 &=
\alpha \pa_1 B_0|_{\partial D_0} + \beta \pa_2 B_0|_{\partial D_0}  + B_1(\alpha, \beta )|_{\partial D_0}
 \label{bc}
\end{align}
where the remainder $B_1$ is
\begin{equation}
 B_1(\alpha, \beta, x,y)
 =  B_0\circ \gamma - B_0   -\alpha \pa_1 B_0 -\beta \pa_2 B_0 + ( \delta B)\circ \gamma
 \label{B1def}
\end{equation}
which will be small, $O(\alpha^2, \beta^2, \delta B)$, provided $\alpha, \beta, \delta B$ are small and that $B_0 \in C^2$,
say. It is convent to write $\alpha, \beta$ in terms of a gradient and skew gradient of $\phi, \eta$,
\be
(\alpha,\beta) =  \nabla^\perp \psi + \nabla \eta.
\ee
In this case,
\begin{equation}
\alpha \pa_1 B_0 + \beta \pa_2 B_0
 =  \nabla^\perp B_0\cdot \nabla \phi +  \nabla B_0\cdot \nabla \eta.
 \label{}
\end{equation}
Since $|\nabla B_0|=1$, to follows that $ \nabla B_0$ is the outward-facing unit normal vector field, $\hat{n}$ to $D_0$
and $ \nabla^\perp B_0$ is the unit tangential vector field forming
a right-handed basis with  $ \nabla B_0$. Since $\psi_0$ is constant on $\pa B_0$ we in
fact have
$
 \nabla^\perp B_0 = \frac{\nabla^\perp \psi_0}{|\nabla \psi_0|} .$
Using this, we re-write \eqref{bc} as the condition
\begin{equation}
 \frac{1}{|\nabla \psi_0|} \pa_s \phi + \partial_{{n}}  \eta = -B_1(\phi, \eta), \qquad \text{on} \qquad \partial D_0.
 \label{}
\end{equation}
We will choose $\eta$ so that $\partial_{{n}} \eta$ is constant on the boundary
and so that $\pa_s \phi$ has zero average along streamlines, i.e. $\oint_{\psi_0} \pa_s \phi\  \rmd s=0$ where $\rmd s = \rmd \ell /|\nabla \psi_0|$ and $\ell$ is the arc-length parameter.
We will construct $\eta, \phi$ so that they satisfy
\begin{alignat}{2}
\partial_{{n}}  \eta &= -\frac{ \oint_{\partial D_0}
   B_1(\phi, \eta)\, {d\ell}}{{\rm length}(\partial D_0)}
 &&\quad \text{ on } \pa D_0,
 \label{etabc}\\
 \pa_s \phi &= |\nabla \psi_0| \left(-B_1(\phi, \eta) +\frac{ \oint_{\partial D_0}
   B_1(\phi, \eta)\, {d\ell}}{{\rm length}(\partial D_0)} \right)
 &&\quad \text{ on } \pa D_0.
 \label{phibc}
\end{alignat}
This choice is made so that the integral of the right-hand side of \eqref{phibc}
along streamlines is zero.

\subsection{ Governing equations for $\eta$ and $\pa_s \phi$}
By Proposition \ref{pset}, if $\det \nabla \gamma = \rho$, we have
\begin{alignat}{2}
 \Delta \eta &= \rho -1 + \sN_\eta, && \quad \text{ in } D_0,
 \label{etasys1}\\
\partial_n \eta &= - \frac{ \oint_{\partial D_0}
   B_1(\phi, \eta)\, {d\ell}}{{\rm length}(\partial D_0)} &&\quad \text{ on } \pa D_0,
 \label{}
\end{alignat}
where $B_1$ is defined as in \eqref{B1def}, and
where $ \sN_\eta$ is a homogeneous quadratic polynomial depending on $\pa^2 \eta, \pa^2 \phi$ defined in \eqref{Neta}.
The equation for $\pa_s \phi$ takes the form
\begin{alignat}{2}
\big( L_0-\Lambda\big) \pa_s\phi &= (F- F_0)  + \sL_\phi+ \sN_\phi, &&\qquad \text{ in } D_0,\label{phisys1}\\
 \pa_s \phi &=|\nabla \psi_0|\left(- B_1(\phi, \eta) +
\frac{ \oint_{\partial D_0}
   B_1(\phi, \eta)\, {d\ell}}{{\rm length}(\partial D_0)} \right)
 &&\qquad \text{ on } \pa D_0,
 \label{phisys2}
\end{alignat}
where $ L_0 $ is the elliptic operator defined in \eqref{modelpbm} and
 $\sL_\phi$ are terms which are linear in derivatives of $\phi$ and $\eta$ and $\rho$ multiplied by small factors and $\sN_\phi$ are quadratically nonlinear terms in derivatives of $\phi$ and $\eta$ and $\rho$.
 In this formulation, the function $F$ is unknown and will need to be chosen to
 be consistent with the fact that $\oint \pa_s\phi d \ell= 0$. This point
 will be explained in detail in the next section.
We emphasize that $\sL$ and $\sN$ do not involve arbitrary third derivatives of $\eta, \phi$
and it is only $ \pa^2 \pa_s\eta,  \pa^2 \pa_s\phi$ that enter, which will be
important in what follows. The following estimates are immediate consequences
of the definitions of the terms on the right-hand sides of
\eqref{etasys1}-\eqref{phisys1} in which can be found in \eqref{Neta}, \eqref{Lphi} and \eqref{Nphi}.
Note that these quantities involve three derivatives of $\psi_0$ but we are
assuming $F_0 \in C^{k-1,\alpha}$ so by standard elliptic estimates
$\|\psi_0\|_{C^{k+1,\alpha}}$ is finite.

\begin{lemma}\label{bndsNL}
If the bounds in \S \ref{smallness} hold, then we have
\begin{align}
 \|\sL_\phi\|_{C^{k-2,\alpha}(D_0)}& \leq C_{k,\alpha} \big( \|\rho -1\|_{C^{k-1,\alpha}(D_0)}
 + \|\eta\|_{C^{k,\alpha}(D_0)} + \|\pa_s \eta\|_{C^{k,\alpha}(D_0)}
 + \epsilon \|\pa_s\phi\|_{C^{k-1,\alpha}(D_0)}\big),\\
 \|\sN_\phi\|_{C^{k-2,\alpha}(D_0)}& \leq C_{k,\alpha} \big( \|\rho -1\|_{C^{k-1,\alpha}(D_0)}
 + (\|\eta\|_{C^{k,\alpha}(D_0)} + \|\pa_s \eta\|_{C^{k,\alpha}(D_0)}
 + \|\pa_s \phi\|_{C^{k,\alpha}(D_0)} )^2\big),\\
  \|\sN_\eta\|_{C^{k-2,\alpha}(D_0)}& \leq C_{k,\alpha} \big( \|\eta\|_{C^{k,\alpha}(D_0)}
 + \| \phi\|_{C^{k,\alpha}(D_0)} )^2,\\
 \|B_1\|_{C^{k-1,\alpha}(\pa D_0)} &\leq C_{k,\alpha}\Big( \|\eta\|_{C^{k,\alpha}(D_0)}
   + \|\phi\|_{C^{k,\alpha}(D_0)}\Big),
\end{align}
where $\epsilon =\max\{ \epsilon_1,\epsilon_2,\epsilon_3,\epsilon_4\}$.
\end{lemma}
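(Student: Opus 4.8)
The plan is to obtain each of the four estimates by inspecting the explicit expressions for $\sN_\eta$, $\sL_\phi$, $\sN_\phi$ and $B_1$ recorded in \eqref{Neta}, \eqref{Lphi}, \eqref{Nphi} and \eqref{B1def}, decomposing every constituent into monomials, and bounding each monomial with the standard H\"older product and composition estimates. The two workhorses are the tame product estimate $\|fg\|_{C^{m,\alpha}} \lesssim \|f\|_{C^{m,\alpha}}\|g\|_{C^0} + \|f\|_{C^0}\|g\|_{C^{m,\alpha}}$ and, for the terms of the form $(\cdot)_\gamma = (\cdot)\circ\gamma$ with $\gamma = {\rm id}+(\alpha,\beta)$ near the identity, the boundedness of composition on $C^{k,\alpha}$ together with Taylor's theorem to estimate the second-order remainders $\sR_{a_0}, \sR_{b_0}, \sR_{G}$, which are quadratic in $\partial\phi,\partial\eta$. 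Throughout I would use that standard elliptic regularity upgrades $\psi_0\in C^{k,\alpha}$ with $F_0\in C^{k-1,\alpha}$ to $\psi_0\in C^{k+1,\alpha}$, so that the coefficients built from $\partial^3\psi_0$ are bounded in $C^{k-2,\alpha}$; the ellipticity constant $M$ and $\|a_0\|_{C^{k,\alpha}},\|b_0\|_{C^{k,\alpha}}$ enter only through these fixed bounds, accounting for the stated dependence of $C_{k,\alpha}$.

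The bounds for $\sN_\eta$ and $B_1$ are the simplest. From \eqref{Neta}, $\sN_\eta = -\nabla\alpha\cdot\nabla^\perp\beta$ with $\alpha,\beta$ equal to first derivatives of $\phi,\eta$, so $\sN_\eta$ is a quadratic form in $\partial^2\phi,\partial^2\eta$ and the product estimate gives the claimed $(\|\eta\|_{C^{k,\alpha}}+\|\phi\|_{C^{k,\alpha}})^2$. For $B_1$, the definition \eqref{B1def} exhibits it as the second-order Taylor remainder of $B_0$ under the near-identity map $\gamma$ together with the composition $(\delta B)\circ\gamma$; Taylor's theorem and the composition estimate on $\partial D_0$ yield control by $\|\eta\|_{C^{k,\alpha}}+\|\phi\|_{C^{k,\alpha}}$ in the regime where $\|\alpha\|,\|\beta\|$ are small.

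The two remaining estimates require the bookkeeping of which monomials carry intrinsically small coefficients. Writing $\sL_\phi=\sum_{i=0}^4\sL_i$ as in \eqref{LN5}, the terms $\sL_0,\sL_1$ (see \eqref{L0},\eqref{Ldef}) depend on $\eta$ and $\rho$ only through $\nabla\eta$, the streamline quantity $\nabla\otimes\nabla^\perp\pa_s\eta$, and $\nabla\rho$, with coefficients built from $1/\rho$ and derivatives of $\psi_0$ that are bounded; these produce precisely the un-prefactored contributions $\|\eta\|_{C^{k,\alpha}}+\|\pa_s\eta\|_{C^{k,\alpha}}+\|\rho-1\|_{C^{k-1,\alpha}}$. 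The $\pa_s\phi$-dependence of $\sL_\phi$, by contrast, enters only through $\sL_3$, which carries the small factors $\delta a=a-a_0$ and $\delta b=b-b_0$; applying the tame estimate with the $C^0$-small factor on $\delta a,\delta b$ places the prefactor $\epsilon$ on the $\|\pa_s\phi\|_{C^{k-1,\alpha}}$ term, as stated. For $\sN_\phi=\sum_{i=1}^4\sN_i + \eqref{nonlinG}$, every monomial is either genuinely quadratic in the pair $(\phi,\eta)$ and their derivatives, yielding the square $(\|\eta\|_{C^{k,\alpha}}+\|\pa_s\eta\|_{C^{k,\alpha}}+\|\pa_s\phi\|_{C^{k,\alpha}})^2$, or is weighted by $\nabla\rho$, contributing the linear $\|\rho-1\|_{C^{k-1,\alpha}}$, or is the remainder $\sR_G$, again quadratic by Taylor's theorem.

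The only point that is not purely mechanical, and the crucial one for the whole scheme, is the derivative count that guarantees the estimates close. A priori the Hessian $(\nabla_x\otimes\nabla_x\psi)\circ\gamma$ would involve three derivatives of $\phi$ and $\eta$, which could not be bounded in $C^{k-2,\alpha}$ by any norm on the right-hand side. The resolution, already emphasized after \eqref{Ndef} and built into Proposition \ref{pset}, is that all third-order derivatives of $\phi$ and $\eta$ enter the formulae only through the streamline-differentiated quantities $\pa^2\pa_s\phi$ and $\pa^2\pa_s\eta$, never through arbitrary $\partial^3\phi$ or $\partial^3\eta$. Hence the right-hand sides are controlled in $C^{k-2,\alpha}$ by $\|\pa_s\phi\|_{C^{k,\alpha}}$, $\|\pa_s\eta\|_{C^{k,\alpha}}$ and $\|\eta\|_{C^{k,\alpha}}$ rather than by the unavailable $\|\phi\|_{C^{k+1,\alpha}}$. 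I would therefore carry out the term-by-term estimates while tracking, for each monomial, both its homogeneity in $(\phi,\eta,\rho-1,\delta a,\delta b,\delta G)$ and the fact that its top-order derivatives act through $\pa_s$; summing the monomial bounds then yields the four claimed inequalities.
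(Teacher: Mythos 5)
Your proposal is correct and follows the same route as the paper, which offers no written proof at all: it declares the estimates ``immediate consequences of the definitions,'' noting only the elliptic upgrade $\psi_0\in C^{k+1,\alpha}$ that you also invoke. Your term-by-term expansion — tame product and composition estimates, Taylor remainders for $\sR_{a_0},\sR_{b_0},\sR_G$, and the key observation that third derivatives of $\phi,\eta$ enter only through $\pa^2\pa_s\phi,\pa^2\pa_s\eta$ — is exactly the bookkeeping the authors leave implicit.
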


We also need Lipschitz bounds for the operators $\sL_\phi, \sN_\phi, \sN_\eta$.
Given functions $\phi_1, \eta_1, \phi_{1}, \eta_{2}$
we write $ u_1 = (\phi_1, \eta_1), u_{2} = (\phi_{2}, \eta_{2})$ and
let $\sL_{\phi}^i, \sN_\phi^i, \sN_\eta^i$ for $i = 1,2$ denote the operators
$\sL_\phi, \sN_\phi, \sN_\eta$ defined in Proposition \ref{pset} evaluated
at $(\phi_i, \eta_i)$. The following estimates are then straightforward consequences
of the definitions.
\begin{lemma}\label{diffbndsNL}
 If the bounds in \S \ref{smallness} hold, then we have
 \begin{align}
  \|\sL_\phi^1 - \sL_\phi^2 \|_{C^{k-2,\alpha}(D_0)}& \leq C_{k,\alpha}
   \big(
    \|\eta_1 - \eta_2\|_{C^{k,\alpha}(D_0)} + \|\pa_s \eta_1 - \pa_s \eta_2\|_{C^{k,\alpha}(D_0)}
   + \epsilon \|\pa_s\phi_1 - \pa_s \phi_2\|_{C^{k,\alpha}(D_0)}\big),\\
  \|\sN_\phi^1 - \sN_\phi^2\|_{C^{k-2,\alpha}(D_0)}&
   \leq C_{k,\alpha} \big(
   \|\eta_1 - \eta_2\|_{C^{k,\alpha}(D_0)} + \|\pa_s \eta_1 -\pa_s \eta_2\|_{C^{k,\alpha}(D_0)}
  + \|\pa_s \phi_1 - \pa_s \phi_2\|_{C^{k,\alpha}(D_0)} )^2,\\
   \|\sN_\eta^1 - \sN_\eta^2\|_{C^{k-2,\alpha}(D_0)}& \leq C_{k,\alpha} \big( \|\eta_1 - \eta_2\|_{C^{k,\alpha}(D_0)}
  + \| \phi_1 - \phi_2\|_{C^{k,\alpha}(D_0)} )^2.
 \end{align}
\end{lemma}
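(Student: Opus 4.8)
The plan is to exploit that $\sL_\phi$, $\sN_\phi$ and $\sN_\eta$ are, by their explicit definitions in \eqref{Lphi}, \eqref{Nphi} and \eqref{Neta}, finite sums of terms of just two structural types, and to estimate the differences term by term, mirroring exactly the proof of Lemma \ref{bndsNL}. The first type is \emph{multilinear}: monomials that are products of a fixed coefficient (built from $\psi_0$, $a_0$, $b_0$, $\rho$, or from the small data $\delta a,\delta b,\delta G$) with derivatives of $(\phi,\eta)$ drawn from the reduced list $\partial^2\phi,\partial^2\eta,\partial^2\partial_s\phi,\partial^3\eta,\partial_{\psi_0}\phi$. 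The second type is \emph{compositional}: the terms $\delta a\circ\gamma$, $\delta b\circ\gamma$, $(G-G_0)\circ\gamma$, and the Taylor remainders $\sR_{a_0},\sR_{b_0},\sR_G$, all of which depend on $(\phi,\eta)$ through the map $\gamma = \mathrm{id}+\nabla^\perp\phi+\nabla\eta$. Throughout I would use the Banach-algebra property $\|fg\|_{C^{k-2,\alpha}}\lesssim\|f\|_{C^{k-2,\alpha}}\|g\|_{C^{k-2,\alpha}}$, the boundedness of the fixed coefficients (with $\|\psi_0\|_{C^{k+1,\alpha}}<\infty$ by elliptic regularity, as already noted before Lemma \ref{bndsNL}), and the fact that $\partial_s$ and $\partial_{\psi_0}$ have bounded $C^{k-1,\alpha}$ coefficients since $|\nabla\psi_0|>0$.

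For the multilinear terms I would write each as $B(u,\dots,u)$ for a fixed bounded multilinear map $B$ with $u=(\phi,\eta)$, and difference it by telescoping, e.g. in the quadratic case $B(u_1,u_1)-B(u_2,u_2)=B(u_1-u_2,u_1)+B(u_2,u_1-u_2)$. Combined with the product estimate, this controls the quadratic parts of $\sN_\eta$ and $\sN_\phi$ by $(\|u_1\|+\|u_2\|)$ times the difference norm; on the $C^{k,\alpha}$-ball used in the iteration this produces the asserted quadratic-in-difference bound. For the linear operator $\sL_\phi$ the same scheme applies, with the crucial accounting that every monomial carries a prefactor that is either one of the a priori small quantities $\delta a,\delta b,\delta G,\partial a_0,\partial b_0,\rho-1$ or the composition $\delta a\circ\gamma$ of size $O(\epsilon)$; this is precisely why differencing the $\partial_s\phi$-entry (which sits in $\sL_3$ against the coefficient $\delta a\circ\gamma,\,\delta b\circ\gamma$) yields the term $\epsilon\,\|\partial_s\phi_1-\partial_s\phi_2\|$ in the stated bound, while the $\eta$- and $\partial_s\eta$-entries appear without the extra factor.

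The hard part — the only step beyond algebra — is the compositional family. Here I would difference in two ways at once: for a term $f\circ\gamma$ with $f\in\{\delta a,\delta b,\dots\}$ I write $f\circ\gamma_1-f\circ\gamma_2$ and invoke the standard composition (tame) estimate
\be
\|f\circ\gamma_1-f\circ\gamma_2\|_{C^{k-2,\alpha}}\lesssim \|f\|_{C^{k-1,\alpha}}\,\|\gamma_1-\gamma_2\|_{C^{k-2,\alpha}},
\ee
which is legitimate because the smallness assumptions of \S\ref{smallness} guarantee that $\gamma_1,\gamma_2$ are diffeomorphisms with uniformly bounded, non-degenerate $C^{k,\alpha}$ norms, and because $\gamma_i-\mathrm{id}=\nabla^\perp\phi_i+\nabla\eta_i$ gives $\|\gamma_1-\gamma_2\|_{C^{k-2,\alpha}}\lesssim\|\phi_1-\phi_2\|_{C^{k,\alpha}}+\|\eta_1-\eta_2\|_{C^{k,\alpha}}$. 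For the Taylor remainders I would use the integral form, e.g.
\be
\sR_{a_0}=\int_0^1(1-t)\,(\gamma-y)^{\!\top}\,(\nabla^2a_0)\!\big(y+t(\gamma-y)\big)\,(\gamma-y)\ \rmd t,
\ee
which displays $\sR_{a_0}$ as quadratic in $\gamma-\mathrm{id}$; its difference then splits into a telescoping of the two explicit factors $(\gamma-y)$ plus a composition-difference inside the Hessian argument, each resulting term carrying one factor $\|\phi_1-\phi_2\|_{C^{k,\alpha}}+\|\eta_1-\eta_2\|_{C^{k,\alpha}}$ and one factor that is $O(\epsilon)$ in the perturbative regime.

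Summing the multilinear and compositional contributions gives the three inequalities. The main thing to watch is the derivative bookkeeping: it must close in $C^{k-2,\alpha}$ exactly as in Lemma \ref{bndsNL}, which works because $\phi$ enters the highest-order terms only through the reduced combination $\partial^2\partial_s\phi$ (and $\eta$ through at most $\partial^3\eta$), always paired against $C^{k-2,\alpha}$ coefficients, so no uncontrolled third derivative of $\phi$ ever appears. I expect this derivative-counting, together with verifying that every compositional difference genuinely lands against a small prefactor, to be the delicate (though routine) part; the purely multilinear pieces such as $\sN_\eta$ are immediate from bilinearity.
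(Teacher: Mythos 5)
Your proposal is correct and follows essentially the same route the paper intends: the paper offers no written proof beyond the remark that these bounds are ``straightforward consequences of the definitions'' in \eqref{Neta}, \eqref{Lphi} and \eqref{Nphi}, and your term-by-term telescoping of the multilinear pieces plus composition/Taylor-remainder differencing for the $\gamma$-dependent pieces is exactly that verification, with the correct bookkeeping of which entries carry $\epsilon$-small prefactors and why only $\partial^2\partial_s\phi$ (never a full third derivative of $\phi$) appears. The only cosmetic caveat is that telescoping a quadratic form yields a bound of the type $\|u_1-u_2\|\,(\|u_1\|+\|u_2\|)$ rather than literally $\|u_1-u_2\|^2$; this matches what is actually used in the Cauchy estimate of Lemma \ref{cauchybds} and is the same looseness already present in the paper's statement.
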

\vspace{1mm}

\subsection{ Recovering $\phi$ from  $\pa_s \phi$}

In the construction, a solution $\Phi$  is obtained by solving \eqref{phisys1}--\eqref{phisys2} for ``$\partial_s \phi = \nabla^\perp \psi_0 \cdot \nabla \psi$".  Consistent with $\Phi=\partial_s \phi$, we will construct a solution  $\Phi$ with the property that its integral on each streamline is zero.  This is done further in the proof and requires the use of {\rm (\textbf{H3})}.  To verify that it is indeed the ``streamline derivative" and to recover the periodic function $\phi$, we appeal to the following lemma.

\begin{lemma}
 \label{zeroavg}
Suppose $\Phi\in C^{k,\alpha} $ satisfies
 \begin{equation}
\oint_{\{\psi_0= c\}}  \Phi  = 0 \qquad \text{for all} \qquad c\in {\rm im}(\psi_0).
  \label{}
 \end{equation}
 Then $\Phi = \pa_s  \phi$ for a unique function $\phi = \phi(\psi_0,\theta)$ which is a zero-mean, periodic function on streamlines
 of $\psi_0$, i.e. $\phi(\psi_0, 0) = \phi(\psi_0, 2\pi)$ and $\oint_{\psi_0} \phi=0$.  Moreover, $\phi$ enjoys the bound
 \be\label{phibd}
\|\phi\|_{k,\alpha} \leq C \|\pa_s \phi\|_{k,\alpha},
\ee
where the constant $C$ depends on  $C^{k,\alpha}$ norms of $\psi_0$.
\end{lemma}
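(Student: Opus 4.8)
The plan is to build $\phi$ by integrating $\Phi$ along the closed streamlines of $\psi_0$ and then normalizing, using the action--angle description of the foliation guaranteed by Hypothesis {\rm (\textbf{H2})}. First I would record the basic kinematics: since $\pa_s \psi_0 = \nabla^\perp \psi_0\cdot \nabla \psi_0 = 0$, the flow $t \mapsto \xi(t)$ of $\nabla^\perp \psi_0$ preserves each level set $\{\psi_0 = c\}$; by Lemma \ref{eoslemma} these are closed curves foliating $D_0$, and since $|\dot\xi| = |\nabla \psi_0|$ the arc-length element satisfies $\rmd \ell = |\nabla\psi_0|\,\rmd t$, so the flow is periodic with period exactly $\mu(c) = \oint_{\{\psi_0 = c\}} \rmd\ell/|\nabla\psi_0|$, which is finite by {\rm (\textbf{H2})} and bounded below by $\mathrm{length}(\{\psi_0=c\})/\sup|\nabla\psi_0| > 0$. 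Introducing action--angle coordinates $(c,\theta)$ with $c = \psi_0$ and $\theta = (2\pi/\mu(c))\,t \in \R/2\pi\Z$ (see \cite{A89}), the streamline derivative becomes the constant-in-$\theta$ field $\pa_s = (2\pi/\mu(c))\,\pa_\theta$, and the streamline average is simply the $\theta$-mean, $\mathbb{P}_{\psi_0} f = \tfrac{1}{2\pi}\int_0^{2\pi} f\,\rmd\theta$.

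In these coordinates the equation $\pa_s \phi = \Phi$ reads $\pa_\theta \phi = (\mu(c)/2\pi)\,\Phi$, and the hypothesis $\oint_{\{\psi_0=c\}}\Phi = \mu(c)\,(\mathbb{P}_{\psi_0}\Phi)(c) = 0$ is precisely the compatibility condition $\int_0^{2\pi}\pa_\theta\phi\,\rmd\theta = 0$ needed for a $2\pi$-periodic primitive to exist. I would therefore set $\widetilde\phi(c,\theta) = (\mu(c)/2\pi)\int_0^\theta \Phi(c,\theta')\,\rmd\theta'$, which is single-valued (periodic) exactly because of the compatibility condition, and then define $\phi = \mathbb{Q}_{\psi_0}\widetilde\phi = \widetilde\phi - \mathbb{P}_{\psi_0}\widetilde\phi$. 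Since $\mathbb{P}_{\psi_0}\widetilde\phi$ depends only on $\psi_0$ it lies in the kernel of $\pa_s$, so $\pa_s\phi = \pa_s\widetilde\phi = \Phi$, while $\mathbb{P}_{\psi_0}\phi = 0$ gives the required zero streamline-mean normalization; this produces a $\phi = \phi(\psi_0,\theta)$ of the asserted type.

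For uniqueness, if $\phi_1,\phi_2$ both satisfy $\pa_s \phi_i = \Phi$ with $\mathbb{P}_{\psi_0}\phi_i = 0$, then $\pa_s(\phi_1 - \phi_2) = 0$ forces $\phi_1 - \phi_2$ to be constant along each streamline, hence a function of $\psi_0$ alone; being also streamline-mean-zero, it vanishes. It remains to prove the estimate \eqref{phibd}. In the $(c,\theta)$ coordinates the solution operator $\Phi \mapsto \phi$ is $\theta$-antidifferentiation followed by subtraction of the $\theta$-mean, which is bounded on Hölder spaces on the circle; transporting this back to $D_0$ and using that the full $C^{k,\alpha}$ norm can be measured through iterated streamline and transverse derivatives $\pa_s$ and $\pa_{\psi_0}$, the bound reduces to controlling the commutators of $\pa_{\psi_0}$ with $\mathbb{P}_{\psi_0}$ and with $\theta$-integration, which are of zeroth order by the geometric identities of Lemma \ref{geolem}. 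This yields $\|\phi\|_{k,\alpha} \le C\|\pa_s\phi\|_{k,\alpha}$ with $C$ depending only on $\|\psi_0\|_{C^{k,\alpha}}$ and $\inf_{D_0}|\nabla\psi_0|$.

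The main obstacle is exactly this last regularity bookkeeping: the period $\mu$ and the action--angle change of variables are one derivative less regular than $\psi_0$, so a naive count appears to lose a derivative when differentiating in the transverse ($c$) direction. The point to verify carefully is that the derivative gained by integrating along streamlines compensates this loss, so that $\pa_{\psi_0}$ commutes with the solution operator up to zeroth-order terms controlled by Lemma \ref{geolem}; this is what keeps the estimate at matching regularity $C^{k,\alpha}\to C^{k,\alpha}$ rather than losing a derivative.
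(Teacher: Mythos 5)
Your construction is the same as the paper's: pass to action--angle (streamline) coordinates, take the $\theta$-primitive of $(\mu/2\pi)\Phi$ (single-valued precisely by the zero-streamline-average hypothesis), and normalize by subtracting the streamline mean; the paper's formula $\phi(\psi_0,\theta)=\mu(\psi_0)\int_{\mathbb T}\bigl(\int_{\theta_0}^{\theta}\Phi\,\rmd\theta'\bigr)\rmd\theta_0$ is exactly your $\widetilde\phi-\mathbb{P}_{\psi_0}\widetilde\phi$. Your uniqueness argument and your flagging of the regularity of $\mu$ and of the commutators handled by Lemma \ref{geolem} are consistent with (and slightly more explicit than) the paper's treatment, so the proposal is correct and essentially identical in approach.
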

\begin{proof}
First  note that all that enters in the formulation of the problem
is $\pa_s \phi$ and not $\phi$ itself and so we are free to modify $\phi$
by adding an arbitrary function of $\psi_0$. To be more precise, let $  \ell$ be the arc-length along curves $\{\psi=c\}$ and introduce the notation
\be
\rmd s =  \frac{\rmd \ell}{|\nabla \psi_0|}.
\ee
We the fix the freedom in defining $\phi$ by enforcing that, on each streamline,
\begin{equation}
 \oint_{\{\psi_0=c\}} \phi \ \rmd s =0, \qquad \forall c\in {\rm im}(\psi_0).
 \label{meanzerophi}
\end{equation}
Assuming that this holds, we have
$\|\phi\|_{k,\alpha} \leq C \|\pa_s \phi\|_{k,\alpha}$, a fact that we use
repeatedly in what follows.
To be more precise, we introduce an ``angular coordinate" along streamlines as
\be
\theta(x)= \frac{2\pi }{\mu(\psi_0(x))}  \int_{\Gamma_{x_0(\psi_0),x} } \rmd s , \qquad \mu(c) = \oint_{\{\psi_0=c\}} \rmd s
\ee
where $\mu$ is the travel time of a particle along a streamline and where, for each $x\in D_0$ the line integral is taken counterclockwise from an arbitrary point $x_0(\psi_0)$ on the streamline to the point $x$. This  point $x_0(\psi_0)$ can be obtain by flowing an arbitrary point $p\in D_0$ by the vector field $\nabla \psi_0$ which is orthogonal to streamlines. This segment is denoted by $\Gamma_{x_0(\psi_0),x}$.
 Then $\theta(x)$ is a $2\pi$--periodic parametrization of the streamline with value $\psi_0(x)$.

Now, given a $\Phi$ which is mean zero on streamlines, note that for an arbitrary $\theta_0\in[0,2\pi]$
\begin{equation}
 \phi(\psi_0, \theta) = \phi(\psi_0, \theta_0) + \mu(\psi_0) \int_{\theta_0}^{\theta} \Phi \,
\rmd \theta' . \qquad
 \label{}
\end{equation}
  Integrating this expression along the streamline in $s_0$ we
$\psi_0 = $const.
\be
 \phi(y) = \phi(\psi_0(y), \theta(y)), \qquad \phi(\psi_0, \theta)  = \mu(\psi_0) \int_\mathbb{T}  \left(  \int_{\theta_0}^{\theta}\Phi  \, \rmd \theta' \right) \rmd \theta_0.
\ee
 One can check that $\nabla^\perp\psi_0\cdot \nabla \theta=\mu^{-1}$.
Thus, for the quantity defined above we have that
\be
\Phi =\partial_s \phi.
\ee
The definitions of $\theta$ and $\mu$ as functions on $D_0$ we obtain the estimate \eqref{phibd}.
\end{proof}

\subsection{The iteration to solve the nonlinear elliptic system}

We use the following iteration. Given $\eta^n, \phi^n
\in C^{k-1, \alpha}(D_0)$ with
$\pa_s \eta^n, \pa_s \phi^n \in C^{k-1,\alpha}(D_0)$ and
$\oint_{\psi_0} \phi^n = 0$, set
\begin{align}
\sN_\eta^n := \sN_\eta(\eta^n, \phi^n),
 \label{}
\end{align}
with $\sN_\eta$ defined in \eqref{Neta}, which satisfies the following bound
\begin{equation}
 \|\sN_\eta^n\|_{C^{k-2,\alpha}(D_0)} \leq C_{k,\alpha} \big(\|\phi^n\|_{C^{k,\alpha}(D_0)} +
  \|\eta^n\|_{C^{k,\alpha}(D_0)}\big)^2.
 \label{}
\end{equation}
By Lemma \ref{neumann} the following problem has a unique solution
$\eta^{n+1} \in C^{k-1,\alpha}$,
\begin{alignat}{2}
 \Delta \eta^{n+1} &= \rho - 1 + \sN_\eta^n, &&\qquad \text{ in } D_0,\label{etan1eqn}\\
\partial_n \eta^{n+1} &= \kappa^{n+1} \equiv \int_{D_0}\Big( \rho - 1 + \sN_\eta^n\Big)\,
 &&\qquad \text{ on } \pa D_0.
 \label{etanbc}
\end{alignat}
Moreover, the iterate $\eta^{n+1}$ enjoys the following estimate
\begin{align}
 \|\eta^{n+1}\|_{C^{k,\alpha}(D_0)}
 \leq C_{k,\alpha} \big( \|\rho - 1\|_{C^{k-2,\alpha}(D_0)}
 + \|\sN_\eta^n\|_{C^{k-2,\alpha}(D_0)}
 + \kappa^{n+1}\big).
 \label{}
\end{align}
We note that \eqref{etanbc} does not agree with \eqref{etabc} but instead
has been chosen to ensure that the Neumann problem is solvable. In the
upcoming Lemma \ref{areaconvergence} we show that provided $\eta^n, \phi^n$
converge, the limit $\eta$ will satisfy \eqref{etabc} as a consequence of
the assumption that ${\rm Vol}(D) = \int_{D_0} \rho $.

In order to get an estimate for $\|\pa_s \eta^{n+1}\|_{C^{k,\alpha}(D_0)}$
we commute the equation \eqref{etan1eqn}-\eqref{etanbc} with $\pa_s$.
Applying $\pa_s$ to \eqref{etan1eqn}, using
\be
[\partial_s,\Delta]=- 2 \nabla^\perp \Delta \psi_0 \cdot \nabla  - \nabla \otimes \nabla^\perp \psi_0 : \nabla \otimes \nabla ,
\ee
we note
that the right-hand side involves highest-order derivatives falling on
$\pa_s \eta$ and lower-order terms.
By the estimates for the Neumann problem from Lemma \ref{neumann}
and using that $\pa_s \eta = 0$ on the boundary since $\pa_s$ is a tangential derivative,
we have
\begin{equation}
 \|\pa_s \eta^{n+1}\|_{C^{k,\alpha}(D_0)} \leq C_{k,\alpha}  \| \eta^{n+1}\|_{C^{k,\alpha}(D_0)} + C_{k,\alpha} \big( \|\pa_s(\rho - 1)\|_{C^{k,\alpha}(D_0)}
 + \|\pa_s \sN_\eta^n\|_{C^{k-2,\alpha}(D_0)} \big).
\end{equation}
With $\eta^{n+1}$ defined, we now set
\begin{align}
\sN_\phi^n = \sN_\phi(\eta^{n+1}, \phi^n),\\
 B_1^n = B_1(\eta^{n+1}, \phi^n),
 \label{}
\end{align}
with $\sN_\phi$ defined in \eqref{phisys1} and $B_1$ defined in \eqref{B1def}.
Using that $\|\phi^n\|_{C^{k-1,\alpha}(D_0)} \leq \|\pa_s \phi^n\|_{C^{k-1,\alpha}(D_0)}$ from Lemma \ref{zeroavg}, we have that
\begin{multline}
  \|\sN_\phi^n\|_{C^{k-2,\alpha}(D_0)}
  \leq C_{k,\alpha} \big( \|\rho -1\|_{C^{k,\alpha}(D_0)}
  + (\|\eta^{n+1}\|_{C^{k,\alpha}(D_0)} + \|\pa_s \eta^{n+1}\|_{C^{k,\alpha}(D_0)}
   + \|\pa_s \phi^n\|_{C^{k,\alpha}(D_0)} )^2
  \\
  + \|\eta^{n+1} \|_{C^{k,\alpha}(D_0)} + \|\pa_s \eta^{n+1}\|_{C^{k,\alpha}(D_0)}
  +\epsilon \|\pa_s \phi^n\|_{C^{k-1,\alpha}(D_0)}
  \big),
 \label{}
\end{multline}
with $\epsilon=\max\{\epsilon_1,\epsilon_2,\epsilon_3,\epsilon_4\}$.
We note that it is crucial that the estimate for the nonlinearity $\sN_\phi^n$ only
requires a bound for $\|\pa_s \eta^{n+1}\|_{C^{k,\alpha}(D_0)}$ and not
the full norm $\|\eta^{n+1}\|_{C^{k+1,\alpha}(D_0)}$ since we could only get a
bound for this term by differentiating the equation for $\eta^{n+1}$ in all
directions and this would require a bound for $\|\phi^{n+1}\|_{C^{k+1,\alpha}(D_0)}$
instead of just $\|\pa_s \phi^{n+1}\|_{C^{k,\alpha}(D_0)}$.
The boundary operator satisfies the estimate
\begin{align}
 \|B_1^n\|_{C^{k,\alpha}(D_0)} &\leq C_{k,\alpha}\Big(  \|\eta^{n+1} \|_{C^{k,\alpha}(D_0)}  + \| \delta B \|_{C^{k,\alpha}(D_0)} \Big).
 \label{}
\end{align}

We now envoke hypothesis (\textbf{H3}) and define $F^n$ by the requirement that the right-hand side of
\eqref{phieqnprop} has zero average along streamlines with $\phi, \eta$ replaced
by $\phi^n, \eta^n$.  Consider the problem
\begin{align}
(L_0 - \Lambda) u &=  g \qquad\ \  \text{in }  D_0\\
u&=u_b \qquad \text{on } \partial D_0.
\end{align}
Letting $G$ be the Green's function for the Dirichlet problem for $L_0 - \Lambda$, we have
\be
 u(x) = \int_{D_0} G(x,x') g(x')\rmd x' +  \oint_{\partial D_0} \partial_n G(x,x') u_b(x') \rmd  \ell.
\ee
We define
\be
( L_0 - \Lambda)^{-1}_{\rm hbc}  g := \int_{D_0} G(x,x') g(x')\rmd x',
\ee
so that $f = (L_0 - \Lambda)^{-1}_{\rm hbc}g$ means $(L_0 - \Lambda) f = g$
and $f = 0$ on $\pa D_0$.
If the equations \eqref{phisys1}-\eqref{phisys2} are to hold then since
$\oint_{\psi_0 = c} \pa_s\phi = 0$ we must ensure that
 \begin{align}\label{Feqn}
K_{\psi_0}[ F^n- F_0] = -\oint_{\psi_0} ( L_0 - \Lambda)^{-1}_{\rm hbc} [\sL_\phi^n  + \sN_\phi^n ] \rmd s  + \oint_{\psi_0} \oint_{\partial D_0} \partial_n G(x,x') \partial_s \phi(x') \rmd  \ell\rmd s,
\end{align}
with $K_{\psi_0}$ defined in the statement of (\textbf{H3}).
Notice that the right-hand-side is a function only $\psi_0$, and the boundary conditions for $\partial_s\phi$ are chosen exactly so that the contribution on the boundary of the final term in \eqref{Feqn} is zero.  Thus,  with $g^n(\psi_0)$ defined as the right-hand-side of equation \eqref{Feqn}, we see that $g^n(\psi_0(\pa D_0))=0$.  Moreover, $g\in C^{k,\alpha}(I)$ with $I={\rm im}(\psi_0)$, which follows from  Lemma \ref{geolem} Appendix \ref{stream}.
These verify that we are in the setting of (\textbf{H3}) and so by assumption
there is an
$F^n = F^n(\psi_0)\in C^{k-2,\alpha}(I)$ which ensures that \eqref{Feqn} holds.  Moreover,
\be\label{F0Fbd}
\|F^n-F_0\|_{C^{k-2,\alpha}(I)} \lesssim\| K_{\psi_0} [\sL_\phi^n  + \sN_\phi^n ] \|_{C^{k,\alpha}}\lesssim \| ( L_0 - \Lambda)^{-1}_{\rm hbc}\Big(\sL_\phi^n  + \sN_\phi^n\Big)\|_{C^{k,\alpha}} \lesssim \| \sL_\phi^n  + \sN_\phi^n\|_{C^{k-2,\alpha}(D_0)}.
\ee
The second inequality above follows from Lemma \ref{Feqn}.

By Lemma \ref{dirichlet} the following problem has a unique solution
$ \Phi^{n+1}  \in C^{k,\alpha}$,
\begin{alignat}{2}
 (L_0 - \Lambda) \Phi^{n+1} &= (F^n-F_0)(\psi_0)   + \sL_\phi^n  + \sN_\phi^n,&&\qquad \text{ in } D_0, \label{phineqn}\\
 \Phi^{n+1} &=
|\nabla \psi_0|\left(- B_1(\phi^n, \eta^n) +
\frac{ \oint_{\partial D_0}
   B_1(\phi^n, \eta^n)\, {d\ell}}{{\rm length}(\partial D_0)} \right) 
 &&\quad \ \ \text{ on } \pa D_0.
 \label{phinbc}
\end{alignat}
By our choice  for $F^n$ and the above discussion, the solution $  \Phi^{n+1} $
has zero average along streamlines and so by Lemma \ref{zeroavg} it follows
that $ \Phi^{n+1}  = \pa_s \phi^{n+1}$ for a unique function $\phi^{n+1}$ with
zero average along streamlines. From \eqref{phineqn}-\eqref{phinbc} and \eqref{F0Fbd} we have
\begin{multline}
 \|\pa_s \phi^{n+1}\|_{C^{k,\alpha}(D_0)}
 + \|\phi^{n+1}\|_{C^{k,\alpha}(D_0)}
 \\
 \leq C_{k,\alpha}
 \big( \|\sL_\phi^n\|_{C^{k-2,\alpha}(D_0)} + \| \sN_\phi^n\|_{C^{k-2,\alpha}(D_0)} + \|B_1^n\|_{C^{k,\alpha}(\pa D_0)}\big).
 \label{}
\end{multline}

In summary, using Lemma \ref{bndsNL}, we have shown
\begin{lemma}
  \label{iteratedef}
  Suppose that {\rm (\textbf{H1})--(\textbf{H3})} and the assumptions \eqref{assump1}, \eqref{assump2}, \eqref{assump3} and \eqref{assump4} hold.
  Let $\phi^n, \eta^n \in C^{k,\alpha}(D_0)$ with
  $\pa_s \phi^{n}, \pa_s \eta^n \in C^{k,\alpha}(D_0)$ be given functions. With
  $\sN_\eta^n, \sN_\phi^n, B_1^n$ defined as above,
  and with $F^{n}$ defined implicitly by \eqref{Feqn},
  the problems \eqref{etan1eqn}-\eqref{etanbc} and  \eqref{phineqn}-\eqref{phinbc}
  have a unique solution $\eta^{n+1}, \phi^{n+1}$ satisfying
  $\oint_{\psi_0} \phi^{n+1}\, d\ell = 0$, and we have the estimates
  \begin{align} \nonumber
   \|\pa_s \eta^{n+1}\|_{C^{k,\alpha}} + \|\eta^{n+1}\|_{C^{k,\alpha}}&\leq
   C_{k,\alpha} \big(\|\pa_s (\rho - 1)\|_{C^{k,\alpha}}
   + \|\rho - 1\|_{C^{k-2,\alpha}} \\
    &\qquad +
   (\|\pa_s \eta^{n}\|_{C^{k,\alpha}} + \|\eta^{n}\|_{C^{k,\alpha}})
    \|\pa_s \phi^{n}\|_{C^{k,\alpha}}
    ,
    \label{etabds}\\ \nonumber
   \|\pa_s \phi^{n+1}\|_{C^{k,\alpha}} +
   \|\phi^{n+1}\|_{C^{k,\alpha}} &\leq
   C_{k,\alpha} \big( \|\rho-1\|_{C^{k,\alpha}}
   +   (\|\pa_s \eta^{n+1}\|_{C^{k,\alpha}} + \|\eta^{n+1}\|_{C^{k,\alpha}})
   \|\pa_s \phi^{n}\|_{C^{k-1,\alpha}}\\
          &\qquad + (\|\pa_s \eta^{n+1}\|_{C^{k,\alpha}} + \|\eta^{n+1}\|_{C^{k,\alpha}})\\
          &\qquad\qquad
       + \epsilon \|\pa_s \phi^{n}\|_{C^{k,\alpha}} + \|B_1^n\|_{C^{k,\alpha}(\pa D_0)},
   \label{phibds}\\
   \|B_1^n\|_{C^{k,\alpha}(\pa D_0)} &\leq C_{k,\alpha}\Big( \|\eta^{n}\|_{C^{k,\alpha}}
   + \|\phi^n\|_{C^{k,\alpha}}\Big).    \label{B1bdd}
 \end{align}
\end{lemma}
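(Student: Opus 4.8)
The plan is to check that each half of the construction is well posed and then assemble the estimates, since the statement is a summary of the step-by-step scheme built up above. First I would solve for $\eta^{n+1}$. The Neumann problem \eqref{etan1eqn}--\eqref{etanbc} is solvable because the constant boundary flux $\kappa^{n+1}$ is chosen to match the integral of the interior source $\rho - 1 + \sN_\eta^n$, which is exactly the Gauss compatibility condition for the Neumann Laplacian; Lemma \ref{neumann} then gives a unique (zero-mean normalized) solution $\eta^{n+1} \in C^{k,\alpha}$ together with the bound $\|\eta^{n+1}\|_{C^{k,\alpha}} \lesssim \|\rho-1\|_{C^{k-2,\alpha}} + \|\sN_\eta^n\|_{C^{k-2,\alpha}} + \kappa^{n+1}$. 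To control $\partial_s\eta^{n+1}$ at the same regularity I would commute the equation with $\partial_s = \nabla^\perp\psi_0\cdot\nabla$ using $[\partial_s,\Delta] = -2\nabla^\perp\Delta\psi_0\cdot\nabla - \nabla\otimes\nabla^\perp\psi_0:\nabla\otimes\nabla$; since $\partial_s$ is tangential to $\partial D_0$ the Neumann data for $\partial_s\eta^{n+1}$ vanishes, so a second application of Lemma \ref{neumann} bounds $\|\partial_s\eta^{n+1}\|_{C^{k,\alpha}}$. Inserting the quadratic estimate for $\sN_\eta^n$ from Lemma \ref{bndsNL} and trading $\|\phi^n\|$ for $\|\partial_s\phi^n\|$ via Lemma \ref{zeroavg} yields \eqref{etabds}.

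With $\eta^{n+1}$ fixed, I would form $\sL_\phi^n, \sN_\phi^n, B_1^n$ by evaluating at $(\eta^{n+1},\phi^n)$ and then select $F^n$. The decisive requirement is that the source of the Dirichlet problem for $\Phi^{n+1} = \partial_s\phi^{n+1}$ must be compatible with the solution having zero average along every streamline, for only then can $\Phi^{n+1}$ be integrated back to a single-valued periodic $\phi^{n+1}$. Applying the streamline projector $\mathbb{P}_{\psi_0}$ to the Dirichlet representation formula shows that this is precisely equation \eqref{Feqn}, namely $K_{\psi_0}[F^n - F_0] = g^n$ with $g^n$ the displayed right-hand side. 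I claim $g^n$ is admissible for Hypothesis {\rm (\textbf{H3})}: it is a function of $\psi_0$ alone by the streamline-projection structure (Lemma \ref{geolem}), and it vanishes at the boundary value $\psi_0(\partial D_0)$ because the boundary condition \eqref{phibc} for $\partial_s\phi$ was arranged to have zero average along the boundary streamline, which annihilates the final term of \eqref{Feqn} there. Hypothesis {\rm (\textbf{H3})} then supplies a unique $F^n = F^n(\psi_0) \in C^{k-2,\alpha}(I)$ obeying the estimate \eqref{F0Fbd}, where the continuity of $(L_0-\Lambda)^{-1}_{\rm hbc}$ from Hypothesis {\rm (\textbf{H1})} (via Lemma \ref{dirichlet}) is used.

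Next I would solve the Dirichlet problem \eqref{phineqn}--\eqref{phinbc} for $\Phi^{n+1}$, uniquely solvable by Lemma \ref{dirichlet} since Hypothesis {\rm (\textbf{H1})} guarantees that $L_0 - \Lambda$ has trivial kernel. By the defining property of $F^n$ the solution has zero streamline average, so Lemma \ref{zeroavg} produces a unique streamline-mean-zero $\phi^{n+1}$ with $\Phi^{n+1} = \partial_s\phi^{n+1}$ and $\|\phi^{n+1}\|_{C^{k,\alpha}} \lesssim \|\partial_s\phi^{n+1}\|_{C^{k,\alpha}}$. The Dirichlet estimate bounds $\|\partial_s\phi^{n+1}\|_{C^{k,\alpha}}$ by the $C^{k-2,\alpha}$ norm of the interior source plus the $C^{k,\alpha}$ norm of the boundary data; absorbing $\|(F^n-F_0)(\psi_0)\|$ into $\|\sL_\phi^n\| + \|\sN_\phi^n\|$ through \eqref{F0Fbd} and inserting the bounds of Lemma \ref{bndsNL} gives \eqref{phibds}, while \eqref{B1bdd} is the final line of Lemma \ref{bndsNL}.

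I expect the genuine obstacle to be the zero-streamline-average bookkeeping of the second step rather than any elliptic estimate. The tension is that the Dirichlet source is not a function of $\psi_0$ alone, whereas the output $\Phi^{n+1}$ is required to be streamline-mean-zero; reconciling this is exactly what forces the choice of $F^n$, and it succeeds only because the projected equation \eqref{Feqn} lands in the range of $K_{\psi_0}$, guaranteed by Hypothesis {\rm (\textbf{H3})} (itself a consequence of {\rm (\textbf{H1}$^\prime$)} and {\rm (\textbf{H2})} through Lemma \ref{h1h2h3lem}). Verifying the two admissibility conditions on $g^n$ --- dependence on $\psi_0$ alone and vanishing at the boundary value --- is the crux, and it hinges on the deliberate boundary data \eqref{phibc}.
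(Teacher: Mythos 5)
Your proposal is correct and follows essentially the same route as the paper: solve the Neumann problem for $\eta^{n+1}$ with the compatibility-matching constant flux, commute with $\pa_s$ using tangentiality of $\pa_s$ to the boundary, choose $F^n$ via \eqref{Feqn} so that Hypothesis (\textbf{H3}) guarantees the Dirichlet source is streamline-mean-zero, solve the Dirichlet problem for $\Phi^{n+1}$, and recover $\phi^{n+1}$ by Lemma \ref{zeroavg}. Your identification of the admissibility of $g^n$ (dependence on $\psi_0$ alone and vanishing at $\psi_0(\pa D_0)$, forced by the boundary data \eqref{phibc}) as the crux matches the paper's own emphasis.
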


\subsection{Uniform estimates for the iterates}
We now set $\eta^0 = \phi^0 = 0$. Given $\eta^\ell, \phi^\ell$, using
Lemma \ref{iteratedef} let $\eta^{\ell+1}$ satisfy \eqref{etan1eqn}-\eqref{etanbc}
and let $\Phi^{\ell+1} = \pa_s\phi^{\ell+1}$ satisfy \eqref{phineqn}-\eqref{phinbc}. In this section
we prove that the sequences $(\eta^\ell,\phi^\ell), (\pa_s \eta^\ell, \pa_s \phi^\ell)$
 are uniformly bounded in $C^{k,\alpha}(D_0)$.
\begin{lemma}
  \label{uniformbds}
  There $\epsilon_0 = \epsilon_0(D_0, k, \alpha, \theta) > 0$ so that
  if the assumptions \eqref{assump1}-\eqref{assump3} hold with $\epsilon_1 + \epsilon_2 + \epsilon_3
  \leq \epsilon_0/2$, if the sequence $\phi^\ell, \eta^\ell$ is defined as above, then
   \begin{equation}
     \|\pa_s \eta^\ell\|_{C^{k,\alpha}(D_0)} +
     \|\pa_s \phi^\ell\|_{C^{k,\alpha}(D_0)} +
     \| \eta^\ell\|_{C^{k,\alpha}(D_0)} +
     \| \phi^\ell\|_{C^{k,\alpha}(D_0)}
     \leq 1.
    \label{}
   \end{equation}
\end{lemma}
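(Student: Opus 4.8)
The plan is to argue by induction on $\ell$, controlling the single scalar quantity
\begin{equation}
  A_\ell := \|\pa_s \eta^\ell\|_{C^{k,\alpha}(D_0)} + \|\pa_s \phi^\ell\|_{C^{k,\alpha}(D_0)} + \|\eta^\ell\|_{C^{k,\alpha}(D_0)} + \|\phi^\ell\|_{C^{k,\alpha}(D_0)} .
\end{equation}
Since $\eta^0 = \phi^0 = 0$ we have $A_0 = 0$, so the base case is immediate. Rather than attempt to prove $A_\ell \leq 1$ directly — which cannot close, because the constants produced by the elliptic solves exceed $1$ — I would prove the sharper bound $A_\ell \leq C_* \epsilon$ for a fixed constant $C_*$ depending only on the data entering Lemma \ref{iteratedef}, where $\epsilon := \max\{\epsilon_1,\epsilon_2,\epsilon_3,\epsilon_4\}$. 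Choosing $\epsilon_0$ so small that $C_*\epsilon_0 \leq 1$ then yields the stated conclusion, and throughout the induction the bound $A_\ell \leq C_*\epsilon \leq 1$ keeps the iterates in the regime where the perturbative estimates of Lemma \ref{iteratedef} are valid (well-posedness of the Neumann solve for $\eta^{\ell+1}$ and of the Dirichlet solve for $\Phi^{\ell+1}=\pa_s\phi^{\ell+1}$, with $F^\ell$ fixed by \eqref{Feqn} via \textbf{(H3)}, being supplied by that lemma).

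For the inductive step, assume $A_\ell \leq C_*\epsilon \leq 1$. Feeding this into \eqref{etabds}, the data terms are $O(\epsilon)$ and the bilinear term is controlled by $A_\ell^2$, so $\|\pa_s \eta^{\ell+1}\|_{C^{k,\alpha}} + \|\eta^{\ell+1}\|_{C^{k,\alpha}} \leq C_{k,\alpha}(\epsilon + A_\ell^2)$. Inserting this, together with $\|B_1^\ell\|_{C^{k,\alpha}(\pa D_0)} \leq C_{k,\alpha}(A_\ell^2 + \epsilon)$ from \eqref{B1bdd} (where the quadratic part comes from $O(\alpha^2,\beta^2)$ in \eqref{B1def} and the $\epsilon$ from $\delta B$), into \eqref{phibds}, every product of iterate norms is again bounded by $A_\ell^2$ and every isolated iterate factor by $C_{k,\alpha}(\epsilon+A_\ell^2)$, while the explicitly small term contributes $\epsilon A_\ell$. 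Summing the two estimates and using $A_\ell\leq 1$ to absorb higher powers gives the feedback inequality $A_{\ell+1} \leq C_{**}(\epsilon + A_\ell^2)$ with $C_{**}$ depending only on $k,\alpha,D_0$, the ellipticity constant $M$, and $\|a_0\|_{C^{k,\alpha}},\|b_0\|_{C^{k,\alpha}},\|\psi_0\|_{C^{k+1,\alpha}}$. To close, set $C_* := 2C_{**}$ and shrink $\epsilon_0$ so that $C_*^2\epsilon_0\leq 1$ and $C_*\epsilon_0\leq 1$; then under the inductive hypothesis,
\begin{equation}
  A_{\ell+1} \leq C_{**}\big(\epsilon + C_*^2\epsilon^2\big) = C_{**}\epsilon\big(1 + C_*^2\epsilon\big) \leq 2C_{**}\epsilon = C_*\epsilon ,
\end{equation}
which advances the induction and, since $C_*\epsilon \leq 1$, delivers the uniform bound.

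The main obstacle — and the reason the iteration is delicately designed — is the avoidance of loss of derivatives, which is exactly the structural point emphasized in the text preceding the lemma. The Dirichlet solve produces only $\pa_s\phi^{\ell+1}$ in $C^{k,\alpha}$ and not the full $\phi^{\ell+1}$ in $C^{k+1,\alpha}$, and the Neumann solve produces $\eta^{\ell+1}$ in $C^{k,\alpha}$ with no control of $C^{k+1,\alpha}$; if the nonlinearities at stage $\ell+1$ depended on an undifferentiated $(k+1)$-st derivative of the previous iterate, the norms in the feedback inequality would not match and the scheme would lose a derivative per step. What makes the estimate self-consistent is that $\sN_\eta^\ell,\sN_\phi^\ell$ and the bound for $\pa_s\eta^{\ell+1}$ enter only through the \emph{tangential} quantities $\pa_s\eta,\pa_s\phi$ at regularity $C^{k,\alpha}$. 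This is secured by estimating $\pa_s\eta^{\ell+1}$ directly from the commuted Neumann problem, using $[\pa_s,\Delta] = -2\nabla^\perp\Delta\psi_0\cdot\nabla - \nabla\otimes\nabla^\perp\psi_0 : \nabla\otimes\nabla$ together with the fact that $\pa_s\eta^{\ell+1}=0$ on $\pa D_0$ because $\pa_s$ is tangential, rather than by differentiating in all directions. Once this closure is verified the remaining quadratic-plus-small-linear bookkeeping above is routine, and the only genuine care needed is the consistent choice of $\epsilon_0$ displayed in the inductive step.
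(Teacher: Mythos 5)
Your proposal is correct and follows essentially the same route as the paper: an induction on $\ell$ propagating the sharper bound $A_\ell \leq C\epsilon$ (the paper writes $M = 4C_{k,\alpha}\epsilon$ where you write $C_*\epsilon$), closed by the feedback inequality $A_{\ell+1}\lesssim \epsilon + \epsilon A_\ell + A_\ell^2$ from Lemma \ref{iteratedef} and a suitable smallness choice of $\epsilon_0$. Your observation that the boundary term must be estimated by $A_\ell^2 + \epsilon$ via the quadratic structure of \eqref{B1def}, rather than by the linear bound displayed in \eqref{B1bdd}, is exactly what the paper's induction implicitly uses, so the bookkeeping matches.
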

\begin{proof}
  Let $C_{k,\alpha}$ be as in \eqref{etabds}-\eqref{phibds}
  and set
  \begin{equation}
   \epsilon_0 = \min(1, 1/(4 (C_{k,\alpha} + C_{k,\alpha}^2))).
   \label{}
  \end{equation}
  Let $\epsilon = \epsilon_1 + \epsilon_2 +\epsilon_3$ and set $M = 4C_{k,\alpha} \epsilon$.
  We claim that if
  $\epsilon \leq \epsilon_0/2$ then the iterates $\phi^\ell, \eta^\ell$ satisfy
  \begin{equation}
    \|\eta^\ell \|_{C^{k,\alpha}(D_0)} + \|\phi^\ell\|_{C^{k,\alpha}(D_0)}
    +
    \|\pa_s \eta^\ell\|_{C^{k,\alpha}(D_0)} + \|\pa_s \phi^\ell\|_{C^{k,\alpha}(D_0)}
    \leq M \leq 1.
   \label{}
 \end{equation}
 This certainly holds for $\ell = 0$. If it holds for $\ell = 0,..., m-1$
 then by \eqref{etabds}-\eqref{phibds} we have
  \begin{equation}
   \|\eta^m\|_{C^{k,\alpha}(D_0)} + \|\phi^m\|_{C^{k,\alpha}(D_0)}
   +
   \|\pa_s \eta^m\|_{C^{k,\alpha}(D_0)} + \|\pa_s \phi^m\|_{C^{k,\alpha}(D_0)}
   \leq C_{k,\alpha} ( M^2 + \epsilon M + \epsilon)
   \leq M,
   \label{}
  \end{equation}
  since $C_{k,\alpha} M^2 \leq \frac{1}{2}M,$ $C_{k,\alpha} \epsilon M \leq
  \frac{1}{4}M^2 \leq \frac{1}{4} M$ and $C_{k,\alpha} \epsilon \leq \frac{1}{4} M$
  if $\epsilon \leq \epsilon_0/2$. The result follows.
\end{proof}

\subsection{Cauchy estimates for the iterates}

\begin{lemma}
  \label{cauchybds}
  There is $\epsilon'_0 = \epsilon'_0(D_0, k, \alpha, \theta) > 0$ with
  the following property.
  If the assumptions \eqref{assump1}-\eqref{assump3} hold with $\epsilon_1 + \epsilon_2 + \epsilon_3
  \leq \epsilon'_0/2$, then with the sequence $\{\phi^\ell, \eta^\ell\}$ defined
  as in the previous lemma, if we set
  \begin{align}
   D_{N, M} &= \|\pa_s \eta^N - \pa_s \eta^M \|_{C^{k-1,\alpha}(D_0)} +
   \|\pa_s \phi^N -\pa_s \phi^M\|_{C^{k-1,\alpha}(D_0)} \\
   &\qquad +
   \| \eta^N - \eta^M\|_{C^{k-1,\alpha}(D_0)} +
   \| \phi^N - \eta^M\|_{C^{k-1,\alpha}(D_0)},
   \label{}
  \end{align}
  then     $D_{N, M} \leq \frac{1}{2} D_{N-1, M-1}.$
   In particular, with $d_1 = \|\pa_s \eta^1\|_{C^{k-1,\alpha}(D_0)} +
   \|\pa_s \phi^1\|_{C^{k-1,\alpha}(D_0)} +
   \| \eta^1\|_{C^{k-1,\alpha}(D_0)} +
   \| \phi^1\|_{C^{k-1,\alpha}(D_0)}$, we have that
   \begin{equation}
    D_{N, M} \leq 2^{1-\min(N, M)} d_1.
    \label{}
   \end{equation}
\end{lemma}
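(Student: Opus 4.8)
The plan is to subtract the two linear problems defining the $N$th and $M$th iterates and to estimate their differences in $C^{k-1,\alpha}$, one derivative below the level at which Lemma \ref{uniformbds} provides uniform control; the gain of the factor $\tfrac12$ will come entirely from the smallness of $\epsilon$ together with the size $\lesssim\epsilon$ of the iterates. Abbreviate $\delta\eta=\eta^N-\eta^M$, $\delta\phi=\phi^N-\phi^M$, and $\delta\Phi=\pa_s\phi^N-\pa_s\phi^M$. First I would treat $\delta\eta$: subtracting the two copies of \eqref{etan1eqn}-\eqref{etanbc} yields the Neumann problem $\Delta(\delta\eta)=\sN_\eta^{N-1}-\sN_\eta^{M-1}$ with $\partial_n(\delta\eta)=\kappa^N-\kappa^M$ on $\pa D_0$. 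By the Neumann estimate of Lemma \ref{neumann} and the difference bound of Lemma \ref{diffbndsNL} for $\sN_\eta$ (whose quadratic right-hand side becomes, via the uniform bounds of Lemma \ref{uniformbds}, a factor $\lesssim\epsilon$ times a difference), one gets $\|\delta\eta\|_{C^{k-1,\alpha}}\lesssim\epsilon\,D_{N-1,M-1}$; commuting with the tangential derivative $\pa_s$ exactly as in the derivation of \eqref{phibds} and using $\pa_s(\delta\eta)=0$ on $\pa D_0$ gives the same bound for $\|\pa_s(\delta\eta)\|_{C^{k-1,\alpha}}$.

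Next I would control $\delta\Phi$, which solves the Dirichlet problem obtained by subtracting \eqref{phineqn}-\eqref{phinbc}, namely $(L_0-\Lambda)\delta\Phi=(F^{N-1}-F^{M-1})(\psi_0)+(\sL_\phi^{N-1}-\sL_\phi^{M-1})+(\sN_\phi^{N-1}-\sN_\phi^{M-1})$, with boundary data governed by $B_1^{N-1}-B_1^{M-1}$. The difference of forcing profiles is estimated precisely as in \eqref{F0Fbd}: since each $F^\ell$ is produced from \eqref{Feqn} by Hypothesis (\textbf{H3}), the inverse bound there and the continuity of $(L_0-\Lambda)^{-1}_{\rm hbc}$ give $\|F^{N-1}-F^{M-1}\|\lesssim\|\sL_\phi^{N-1}-\sL_\phi^{M-1}\|+\|\sN_\phi^{N-1}-\sN_\phi^{M-1}\|$ in the relevant norm. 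Applying the Dirichlet estimate of Lemma \ref{dirichlet}, the difference bounds of Lemma \ref{diffbndsNL} for $\sL_\phi$ (carrying an explicit $\epsilon$) and $\sN_\phi$ (carrying a factor $\lesssim\epsilon$), the small Lipschitz constant of $B_1$ (whose $\delta B$ contribution is independent of the iterate and cancels in the difference, so only the $O(\epsilon)$ quadratic part survives), together with the bound on $\delta\eta$ from the first step, I obtain $\|\delta\Phi\|_{C^{k-1,\alpha}}\lesssim\epsilon\,D_{N-1,M-1}$. Finally $\delta\phi$ is recovered from $\delta\Phi$ through Lemma \ref{zeroavg}, both having vanishing streamline averages, at the cost of the constant there.

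Summing the $\eta$- and $\phi$-estimates gives $D_{N,M}\le C_{k,\alpha}\epsilon\,D_{N-1,M-1}$; choosing $\epsilon'_0$ so small that $C_{k,\alpha}\epsilon\le\tfrac12$ whenever $\epsilon\le\epsilon'_0/2$ yields the one-step contraction $D_{N,M}\le\tfrac12 D_{N-1,M-1}$. Iterating this down to the base level $\min(N,M)$ and using $\eta^0=\phi^0=0$ to identify the base-level difference with $d_1$ then produces the stated geometric decay $D_{N,M}\le 2^{1-\min(N,M)}d_1$.

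The main obstacle is a careful accounting of derivative counts. Every right-hand side above must be shown to depend on $\phi,\eta$ only through $\pa_s\phi,\pa_s\eta$ and their derivatives up to second order -- this is the structural feature recorded after \eqref{Ndef} and exploited in \eqref{phibds} -- so that the difference estimates close in $C^{k-1,\alpha}$ using only the $C^{k,\alpha}$ control of $\pa_s\phi,\pa_s\eta$ from Lemma \ref{uniformbds}, rather than a full $C^{k+1,\alpha}$ bound on $\phi,\eta$; the relevant Lipschitz estimates are exactly those of Lemma \ref{diffbndsNL}, applied in the tame form with the size taken in the top available norm and the difference in the lower norm. A secondary point is that Hypothesis (\textbf{H3}) is invoked at regularity one order below $k$, which is legitimate since it follows from (\textbf{H1}$'$) and (\textbf{H2}) via Lemma \ref{h1h2h3lem} at every admissible order.
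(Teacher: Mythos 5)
Your proposal is correct and follows essentially the same route as the paper, which simply remarks that the Cauchy estimate is proved as in Lemma \ref{uniformbds} with Lemma \ref{diffbndsNL} replacing Lemma \ref{bndsNL}; your write-up is a faithful, more detailed expansion of that one-line argument (subtracting the Neumann and Dirichlet problems, using the uniform bounds to convert quadratic differences into $\epsilon$ times a difference, and closing in $C^{k-1,\alpha}$ via the $\pa_s$-structure). No gaps.
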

\begin{proof}
 This is proved in nearly the same way as the previous lemma, but relies on
 Lemma \ref{diffbndsNL} in place of Lemma \ref{bndsNL}.
\end{proof}

\subsection{Convergence of the boundary term}

\begin{lemma}
  \label{areaconvergence}
 Let $D_0, D$ be domains in $\R^2$ and suppose that for some function $\rho$,
 \begin{equation}
{ \rm Area }(D)= \int_{D_0} \rho\, \rmd y.
  \label{}
 \end{equation}
 Suppose that $\partial D = \{ B(x) = 0\}$ for some function $B$ defined in a tubular neighborhood of $\partial D$ and has non-vanishing gradients there. Let $\gamma$ be a diffeomorphism of the form $\gamma= {\rm id} + \nabla^\perp \phi
 + \nabla \eta$ where
\begin{alignat}{2}
 \pa_s \phi &= |\nabla \psi_0| \left(-B_1(\phi, \eta) +\frac{ \oint_{\partial D_0}
   B_1(\phi, \eta)\, {d\ell}}{{\rm length}(\partial D_0)} \right),
\qquad  \text{ on } \pa D_0,
\end{alignat}
 with $B_1$ defined as in \eqref{B1def}.
 and where $\partial_n \eta$ is constant on $\pa D_0$. Then in fact
\begin{alignat}{2}
\partial_{{n}}  \eta &= -\frac{ \oint_{\partial D_0}
   B_1(\phi, \eta)\, {d\ell}}{{\rm length}(\partial D_0)},\qquad  \text{ on } \pa D_0,
\end{alignat}
 and as a consequence, $\gamma: \partial D_0 \to \partial D$.
\end{lemma}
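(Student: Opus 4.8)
The plan is to show that, with the prescribed boundary data, $B\circ\gamma$ is \emph{constant} along $\partial D_0$, and then to use the area constraint to pin that constant to zero. First I would recall the exact boundary identity underlying \eqref{bc}: since $B_0=0$ on $\partial D_0$, $\nabla B_0=\hat n$ is the outward unit normal and $\nabla^\perp B_0=\nabla^\perp\psi_0/|\nabla\psi_0|$, one has
\be
 B\circ\gamma\big|_{\partial D_0} = \frac{1}{|\nabla\psi_0|}\partial_s\phi + \partial_n\eta + B_1(\phi,\eta),
\ee
with $B_1$ the remainder \eqref{B1def}. Substituting the prescribed value of $\partial_s\phi$, the two copies of $B_1$ cancel, and since $\partial_n\eta\equiv\kappa$ is constant by hypothesis we obtain
\be
 B\circ\gamma\big|_{\partial D_0} = \kappa + \frac{\oint_{\partial D_0} B_1(\phi,\eta)\,\rmd\ell}{\mathrm{length}(\partial D_0)} =: c_*,
\ee
a quantity independent of the boundary point. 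It therefore remains only to prove $c_*=0$.

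Next I would read $c_*=0$ geometrically. The relation $B\circ\gamma\equiv c_*$ on $\partial D_0$ says $\gamma(\partial D_0)\subseteq\{B=c_*\}$. Because the iterates $\phi,\eta$ and $\delta B$ are uniformly small (Lemma \ref{uniformbds}), both $\kappa$ and the boundary average of $B_1$ are small, so $c_*$ is small; and since $B$ has non-vanishing gradient in a tubular neighborhood of $\partial D$, for such small $c_*$ the level set $\{B=c_*\}$ is a single smooth Jordan curve bounding a region $\Omega_{c_*}$, with $\Omega_0=D$. As $\gamma$ is an orientation-preserving diffeomorphism with $\det\nabla\gamma=\rho>0$ (equivalently the $\eta$-equation \eqref{etaeq}), the image $\gamma(\partial D_0)=\partial(\gamma(D_0))$ is itself a Jordan curve contained in the Jordan curve $\{B=c_*\}$, hence equal to it, so that $\gamma(D_0)=\Omega_{c_*}$.

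I would then compare areas. By the change-of-variables formula,
\be
 \mathrm{Area}(\Omega_{c_*}) = \mathrm{Area}(\gamma(D_0)) = \int_{D_0}\rho\,\rmd y = \mathrm{Area}(D) = \mathrm{Area}(\Omega_0),
\ee
using the standing hypothesis $\mathrm{Area}(D)=\int_{D_0}\rho$. On the other hand the map $c\mapsto\mathrm{Area}(\Omega_c)$ is strictly increasing near $c=0$, since by the coarea formula
\be
 \frac{\rmd}{\rmd c}\,\mathrm{Area}(\Omega_c) = \oint_{\{B=c\}}\frac{\rmd\ell}{|\nabla B|} > 0,
\ee
a finite positive number by the gradient hypothesis. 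Injectivity of this map, together with the displayed equality of areas and the smallness of $c_*$, forces $c_*=0$. Consequently $\partial_n\eta=\kappa=-\oint_{\partial D_0}B_1(\phi,\eta)\,\rmd\ell/\mathrm{length}(\partial D_0)$, which is precisely the asserted value, and $B\circ\gamma\equiv0$ on $\partial D_0$, i.e. $\gamma:\partial D_0\to\partial D$.

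The main obstacle is the topological step: ensuring that the connected curve $\gamma(\partial D_0)$ fills out the \emph{entire} level set $\{B=c_*\}$ rather than a proper sub-arc. This is exactly where smallness is essential—both $\gamma(\partial D_0)$ and $\{B=c_*\}$ are $C^1$-small perturbations of $\partial D$, so the inclusion of one Jordan curve in the other is genuinely an equality—and where one must keep $|\nabla B|$ bounded away from $0$ on the relevant level sets so that the area function is strictly monotone. Both requirements follow from the non-vanishing-gradient hypothesis on $B$ and the uniform bounds of Lemma \ref{uniformbds}.
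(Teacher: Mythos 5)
Your proposal is correct and follows essentially the same route as the paper: show $B\circ\gamma$ is constant on $\partial D_0$ from the prescribed boundary data, then use the area constraint together with the coarea identity $\frac{\rmd}{\rmd c}\mathrm{Area}(\overline{\{B=c\}})=\oint_{\{B=c\}}\rmd\ell/|\nabla B|>0$ to force that constant to be zero. Your added care with the topological step (that $\gamma(\partial D_0)$ fills out the entire level set) is a reasonable refinement of the same argument rather than a different approach.
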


\begin{proof} Recall that, by the definition of the map $\gamma$, we have
\begin{align}
 B\circ \gamma|_{\partial D_0}  &=  \frac{1}{|\nabla \psi_0|} \pa_s \phi + \partial_{{n}}  \eta + B_1(\phi, \eta), \qquad \text{on} \qquad \partial D_0.
\end{align}
By the above assumptions, this implies that, for some constant $c$,
\be
 B\circ \gamma|_{\partial D_0} = c.
\ee
  This says that $\gamma$ maps $\partial D_0$ to the level set $\{ B=c\}$.  We wish to conclude that $c=0$ based on the fact that the area of $\gamma(D_0)$ is the same as $D = \overline{\{ B=0\}}$.  Note that the area enclosed  by the level set, $\overline{\{ B=c\}}$, has the property that
\be
\frac{\rmd}{\rmd c} {\rm Area}({\overline{\{ B=c\}}})  = \oint_{{\{ B=c\}}} \frac{\rmd \ell}{|\nabla B|}.
\ee
By our assumptions that $|\nabla B|$ is non-vanishing in a neighborhood of the zero set, then the level sets are  Jordan curves in a neighborhood of $0$ and the area enclosed must change in accord with the above formula.  Thus, the unique value of $c$ such that
\be
 {\rm Area}({\overline{\{ B=c\}}}) =  {\rm Area}({\overline{\{ B=0\}}})
\ee
is $c=0$ and we are done.
\end{proof}

\subsection{Proof of Theorem \ref{thm1}}

By Lemmas \ref{uniformbds} and \ref{cauchybds} it follows that the iterates
$(\eta^\ell, \phi^\ell)$ form a Cauchy sequence in $C^{k+2, \alpha}(D_0)$ and
so they converge to functions $(\eta, \phi) \in C^{k+2, \alpha}(D_0)$
with a corresponding statement for $\pa_s \eta^\ell, \pa_s\phi^\ell$. We then
set $\gamma = {\rm id}+ \nabla \eta + \nabla^\perp \phi$. It remains to show that
$\gamma(D_0) = D$, and by Lemma \ref{areaconvergence} and
 \eqref{etabc}-\eqref{phibc} it follows that $\gamma|_{\pa D_0} = \pa D$
  as required. The estimate \eqref{gammabdthm} follows from the proof of
  Lemma \ref{uniformbds}.

\subsection{Proof of Theorem \ref{thm2}}

We define a sequence of diffeomorphisms $\{\gamma^N\}$ as follows. Given
a domain $D_{N-1}$ and a diffeomorphism
$\gamma^{N-1}: D_0 \to D_{N-1}$ of the form $\gamma^{N-1} ={\rm id}+ \nabla \eta^{N-1}
+ \nabla^\perp \phi^{N-1}$, define $\rho^N$ by
\begin{equation}
 \rho^N(y) =  X(y, \eta^{N-1}, \phi^{N-1}, \nabla \eta^{N-1}, \nabla \phi^{N-1},
  \nabla \pa_s \eta^{N-1}, \nabla \pa_s \phi^{N-1})
 \label{}
\end{equation}
and define
$\sigma^N > 0$ by
$\sigma_{N}^2 = \frac{\int_{D_0} \rho^{N}}{Vol D_0}$ so that with
$D_N = \sigma_N D_0$, we have $Vol D_N = \sigma_{N}^2 Vol D_0 = \int_{D_0} \rho^N$.
By Theorem \ref{thm1} there is a diffeomorphism $\gamma^N: D_0 \to D_N$ with
$\det \nabla \gamma^{N} = \rho^N$ and where $\gamma^{N}$ is of the form
$\gamma^{N} ={\rm id}+ \nabla \eta^N + \nabla^\perp \phi^N$ so that
$\psi^N = \psi_0\circ \gamma^N$ satisfies \eqref{modelpbmdef}, and we
have the estimates
\begin{equation}
 \|\pa_s \eta^N\|_{C^{k-1, \alpha}}
 +
 \|\pa_s \phi^N\|_{C^{k-1, \alpha}}
 +
 \|\eta^N\|_{C^{k-1, \alpha}}
 +
 \| \phi^N\|_{C^{k-1, \alpha}}
 \leq C_{k,\alpha} \big( \|\rho^{N} - 1\|_{C^{k-1,\alpha}} + \ve\big).
 \label{gammaNbdnonlinear}
\end{equation}
Taylor expanding $\rho^N = X(y, \eta^{N-1}, \phi^{N-1}, \nabla \eta^{N-1}, \nabla
\phi^{N-1}, \nabla \pa_s \eta^{N-1}, \nabla \pa_s \phi^{N-1})$ around $(\eta, \phi) = (0, 0)$ and using the bound
\eqref{BXbds} we have
\begin{multline}
 \|\rho^N - 1\|_{C^{k-1, \alpha}}
 \leq C \epsilon_X \big(\|\pa_s\eta^{N-1}\|_{C^{k-1, \alpha}}+ \|\pa_s\phi^{N-1}\|_{C^{k-1, \alpha}}
 +\|\eta^{N-1}\|_{C^{k-1, \alpha}}+ \|\phi^{N-1}\|_{C^{k-1, \alpha}}\big)
 \\
 + C\big(\|\pa_s\eta^{N-1}\|_{C^{k-1, \alpha}}+ \|\pa_s\phi^{N-1}\|_{C^{k-1, \alpha}}
 +\|\eta^{N-1}\|_{C^{k-1, \alpha}}+ \|\phi^{N-1}\|_{C^{k-1, \alpha}}\big)^2,
 \label{gammaNbdnonlinear}
\end{multline}
provided $\|\pa_s\eta^{N-1}\|_{C^{k-1, \alpha}}+ \|\pa_s\phi^{N-1}\|_{C^{k-1, \alpha}}
+\|\eta^{N-1}\|_{C^{k-1, \alpha}}+ \|\phi^{N-1}\|_{C^{k-1, \alpha}} \leq 1$,
say.
We now prove that the sequence $\gamma^{N}$ is uniformly bounded provided
$\ve_X$ is taken sufficiently small.
 Let $C_{k,\alpha}$ be the constant in \eqref{gammabdthm} and take
 $\ve_X$ so small that
$
  4 C_{k,\alpha}C_{k,\alpha}' \ve_X  \leq 1,
$
 and suppose that
 \begin{equation}
   \|\pa_s \eta^N\|_{C^{k-1, \alpha}}
   +
   \|\pa_s \phi^N\|_{C^{k-1, \alpha}}
   +
   \|\eta^N\|_{C^{k-1, \alpha}}
   +
   \| \phi^N\|_{C^{k-1, \alpha}}
   \leq 2 C_{k,\alpha} \ve \leq 1.
  \label{}
 \end{equation}

 By \eqref{gammaNbdnonlinear},  we then have
 \begin{multline}
   \|\pa_s \eta^{N+1}\|_{C^{k-1, \alpha}}
   +
   \|\pa_s \phi^{N+1}\|_{C^{k-1, \alpha}}
   +
   \|\eta^{N+1}\|_{C^{k-1, \alpha}}
   +
   \| \phi^{N+1}\|_{C^{k-1, \alpha}}
   \\
  \leq C_{k,\alpha} \big( C_{k,\alpha}' \ve_X (2\ve)^{k+1+\alpha} + \ve\big)
  \leq 2 C_{k,\alpha} C_{k,\alpha}'  \ve\ve_X  + C_{k,\alpha} \ve
  \leq 2 C_{k,\alpha} \ve,
  \label{}
 \end{multline}
 and it follows that the sequence $\{\gamma^N\}_{N = 0}^\infty$ is uniformly bounded in
 $C^{k+1,\alpha}$. Using a similar argument it is straightforward to see that
 this sequence is also a Cauchy sequence in $C^{k+1,\alpha}$ and so
 $\gamma^N \to \gamma \in C^{k+1,\alpha}$ which by construction satisfies
 \eqref{satisfynonlin}.

\section{Elliptic Estimates}
In this appendix, we collect some well-posedness results from elliptic theory  for the
Dirichlet and Neumann problems. The first is concerning the Dirichlet problem can be found in e.g.
Theorem 6.6 of  \cite{GT15} when $k = 0$ and Problem 6.2 of \cite{GT15} when $k \geq 1$:
\begin{lemma}
  \label{dirichlet}
  Fix $k \geq 2$ and $\alpha\in (0,1)$ and $f \in C^{k-2,\alpha}(D_0)$,
  $g \in C^{k,\alpha}(\pa D_0)$. Let $a^{ij}, b^i, c$ be smooth
  coefficients and set
  \begin{equation}
   L = \sum_{i,j=1}^2 a^{ij}\pa_i\pa_j +\sum_{i = 1}^2 b^i\pa_i.
   \label{}
  \end{equation}
   Suppose that
  the only solution to
  \begin{equation}
   (L + c)v = 0, \qquad v \in H^1_0(D_0)
   \label{}
  \end{equation}
  is $v = 0$.
  Then the Dirichlet problem
 \begin{align}
  (L + c)u &= f, \quad \text{ in } D_0,\\
  u &= g, \quad \text{ on } \pa D_0,
  \label{}
 \end{align}
 has a unique solution $u \in C^{k,\alpha}(D_0)$, and there is
 a constant $C_0 = C_0 (D_0, \|b\|_{k-2,\alpha})$ with
 \begin{equation}
  \|u\|_{k,\alpha}
  \leq C_0 \big(\|f\|_{k-2, \alpha} + |g|_{k,\alpha}\big).
  \label{schauderdirichlet}
 \end{equation}
\end{lemma}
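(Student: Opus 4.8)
The plan is to assemble this from the standard Schauder theory, since it is essentially a restatement of the cited results in \cite{GT15}; the only point requiring care is how the trivial-kernel hypothesis enters. Uniqueness is immediate: if $u_1, u_2$ both solve the problem, then $v = u_1 - u_2 \in H^1_0(D_0)$ satisfies $(L+c)v = 0$, so $v \equiv 0$ by assumption.

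For existence I would first homogenize the boundary data. Since $\pa D_0$ is $C^{k,\alpha}$ and $g \in C^{k,\alpha}(\pa D_0)$, extend $g$ to some $\tilde g \in C^{k,\alpha}(D_0)$ and set $w = u - \tilde g$; then $u$ solves the problem iff $w$ solves $(L+c)w = \tilde f$ in $D_0$ with $w = 0$ on $\pa D_0$, where $\tilde f := f - (L+c)\tilde g \in C^{k-2,\alpha}(D_0)$. To produce $w$, I would pass to the weak formulation in $H^1_0(D_0)$: writing the principal part in divergence form (harmless since the coefficients are smooth) and adding a sufficiently large constant to render the appropriately sign-normalized bilinear form coercive, the shifted operator is invertible by Lax--Milgram, and $L+c$ is a compact perturbation of it via the compact embedding $H^1_0 \hookrightarrow L^2$. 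The Fredholm alternative then applies, and the hypothesis that $(L+c)v = 0$ admits only the trivial $H^1_0$ solution guarantees that $\tilde f$ lies in the range, yielding a weak solution $w \in H^1_0(D_0)$.

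It remains to upgrade regularity and record the estimate. Interior Schauder estimates (Theorem 6.2 of \cite{GT15}) together with boundary Schauder estimates (Theorem 6.6 of \cite{GT15}) bootstrap the weak solution $w$ to $C^{k,\alpha}(D_0)$, using $\tilde f \in C^{k-2,\alpha}$ and smoothness of the coefficients and boundary. These same estimates yield the global a priori bound $\|u\|_{k,\alpha} \leq C(\|f\|_{k-2,\alpha} + |g|_{k,\alpha} + \|u\|_{C^0(D_0)})$.

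The one genuinely non-formal step is absorbing the lower-order term $\|u\|_{C^0}$, and this is where the trivial-kernel hypothesis is used a second time. I would argue by contradiction and compactness: if \eqref{schauderdirichlet} failed, there would be solutions $u_n$ normalized so that $\|u_n\|_{k,\alpha} = 1$ while $\|f_n\|_{k-2,\alpha} + |g_n|_{k,\alpha} \to 0$; by the compact embedding $C^{k,\alpha} \hookrightarrow C^{k,\alpha'}$ for $\alpha' < \alpha$, a subsequence converges to some $u_\infty$ with $\|u_\infty\|_{k,\alpha} = 1$ solving $(L+c)u_\infty = 0$ in $D_0$ with $u_\infty = 0$ on $\pa D_0$, hence $u_\infty \in H^1_0(D_0)$ and $u_\infty \equiv 0$ by hypothesis, contradicting the normalization. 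I expect no deeper obstacle here: the entire lemma is textbook elliptic theory, and the only feature specific to our setting is the repeated appeal to the triviality of the kernel of $L+c$ on $H^1_0$, which drives both solvability (Fredholm) and the removal of the $C^0$ term (compactness).
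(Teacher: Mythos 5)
The paper does not actually prove this lemma: it is stated in the appendix purely as a citation to \cite{GT15} (Theorem 6.6 for $k=0$ and Problem 6.2 for $k\geq 1$), so there is no in-paper argument to compare against. Your proposal reconstructs the standard textbook proof that those references contain, and the overall architecture --- uniqueness from the trivial kernel, homogenization of the boundary data, existence via Lax--Milgram for a shifted coercive operator plus the Fredholm alternative (index zero, so trivial kernel implies surjectivity), Schauder bootstrap to $C^{k,\alpha}$, and removal of the $\|u\|_{C^0}$ term by compactness --- is exactly right and is the intended content of the citation.

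One step is imprecise as written. In the contradiction argument you normalize $\|u_n\|_{k,\alpha}=1$ and claim the $C^{k,\alpha'}$-limit $u_\infty$ satisfies $\|u_\infty\|_{k,\alpha}=1$; but the $C^{k,\alpha}$ norm is only lower semicontinuous under convergence in the weaker H\"older norm, so a priori the limit could have $\|u_\infty\|_{k,\alpha}<1$, even $u_\infty\equiv 0$, and then no contradiction results. The standard repair is to invoke the a priori estimate with the $C^0$ term first: from $1=\|u_n\|_{k,\alpha}\leq C(\|f_n\|_{k-2,\alpha}+|g_n|_{k,\alpha}+\|u_n\|_{C^0})$ and the vanishing of the data you get $\|u_n\|_{C^0}\geq 1/(2C)$ for large $n$, and since the sup norm \emph{is} continuous under $C^{k,\alpha'}$ convergence, the limit satisfies $\|u_\infty\|_{C^0}\geq 1/(2C)>0$ while solving the homogeneous Dirichlet problem in $H^1_0(D_0)$, which contradicts the kernel hypothesis. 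With that one-line fix the argument is complete.
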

For the Neumann problem, compatibility is also required.
\begin{lemma}
  \label{neumann}
  Fix $k \geq 2$ and $\alpha\in (0,1)$, and $f \in C^{k-2,\alpha}(D_0)$,
  $g \in C^{k-1,\alpha}(\pa D_0)$ satisfying
  \begin{equation}
   \int_{D_0} f = \int_{\partial D_0} g.
   \label{orth}
  \end{equation}
  Then the Neumann problem
 \begin{align}
  \Delta u  &= f, \quad \text{ in } D_0,\\
  \pa_\n u &= g, \quad \text{ on } \pa D_0,
  \label{}
 \end{align}
 has a unique solution $u \in C^{k, \alpha}(D_0)$ and there is a constant
 $C_1 = C_1(D_0)$ with
 \begin{equation}
  \|u\|_{k,\alpha} \leq C_1\big(  \|f\|_{k-2,\alpha} + |g|_{k-1,\alpha}\big).
  \label{}
 \end{equation}
\end{lemma}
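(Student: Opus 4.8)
The plan is to obtain the Neumann solution in the classical two-step fashion: first produce a weak solution by a variational argument, then bootstrap its regularity using Schauder theory. At the outset I would record that the compatibility condition \eqref{orth} is precisely the necessary condition coming from the divergence theorem, since any solution satisfies $\int_{D_0}\Delta u = \oint_{\partial D_0}\pa_n u$. This identity also shows that the relevant linear functional (below) annihilates constants, which is what makes the variational problem well posed.

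For existence I would work in the Hilbert space $H = \{\,v\in H^1(D_0) : \int_{D_0} v = 0\,\}$ equipped with the inner product $\langle u,v\rangle = \int_{D_0}\nabla u\cdot\nabla v$, which by the Poincaré--Wirtinger inequality is equivalent to the $H^1$ norm on mean-zero functions. The weak formulation seeks $u\in H$ with
\[
\int_{D_0}\nabla u\cdot\nabla v = -\int_{D_0} f\,v + \oint_{\partial D_0} g\,v, \qquad \text{for all } v\in H^1(D_0).
\]
The right-hand side is a bounded linear functional on $H^1(D_0)$, and \eqref{orth} forces it to vanish on constants, so it descends to $H$; the Riesz representation theorem then yields a unique $u\in H$. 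As a solution of the full problem this $u$ is unique only \emph{up to an additive constant}, which is the correct sense of uniqueness for a Neumann problem, and I would fix the representative by imposing $\int_{D_0} u = 0$. Uniqueness in this sense is then immediate: if $\Delta u = 0$ in $D_0$ and $\pa_n u = 0$ on $\partial D_0$, integrating $u\,\Delta u$ by parts gives $\int_{D_0}|\nabla u|^2 = 0$, whence $u$ is constant.

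To upgrade the weak solution to a classical one obeying the stated bound, I would invoke the Schauder estimates for the Neumann (conormal) boundary value problem on a domain with $C^{k,\alpha}$ boundary; these are classical (cf.\ the Neumann counterpart of Theorem 6.6 and Problem 6.2 of \cite{GT15}). Interior Schauder estimates promote $u$ to $C^{k,\alpha}$ locally from $f\in C^{k-2,\alpha}$, while the boundary estimates near $\partial D_0$ use $g\in C^{k-1,\alpha}$ together with the boundary regularity; combining these and absorbing the lower-order norm of $u$ using the $L^2$ control from the variational step produces
\[
\|u\|_{k,\alpha}\leq C_1\big(\|f\|_{k-2,\alpha} + |g|_{k-1,\alpha}\big),
\]
with $C_1 = C_1(D_0)$.

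I do not expect a genuine obstacle here, as this is a textbook result; the only points requiring care are the correct formulation of uniqueness (up to constants, resolved by the normalization $\int_{D_0} u = 0$) and citing the appropriate form of the \emph{Neumann} boundary Schauder estimate, whose conormal boundary condition is slightly less standard in the references than the Dirichlet version used in Lemma \ref{dirichlet}.
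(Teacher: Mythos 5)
Your proposal is correct and complete; the paper itself offers no proof of this lemma, simply recording it as a classical fact alongside the Dirichlet counterpart (Lemma \ref{dirichlet}), so your variational-plus-Schauder argument is exactly the standard route one would supply. Your remark that uniqueness holds only up to an additive constant (fixed by normalizing $\int_{D_0} u = 0$) is a correct and worthwhile clarification of the statement as written.
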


\section{Streamline geometry}\label{stream}

In this appendix, we prove some formulae which are useful for our deformation scheme which uses streamline coordinates.  First, we state some relations between the curvature and vorticity of along a given streamline.

\begin{lemma}\label{lemcurv}  Let $\hat{n} =  {\nabla \psi}/|\nabla \psi|$ and $\hat{\tau} = {\nabla^\perp \psi}/|\nabla \psi|$.  The following formulae hold
  \begin{align} \label{formt}
\hat{\tau}\cdot \nabla \otimes \nabla \psi\cdot \hat{\tau}  &=   |\nabla \psi| \kappa,\\  \label{formn}
 \hat{n}\cdot \nabla \otimes \nabla \psi\cdot \hat{n} &=   \Delta \psi -|\nabla \psi|  \kappa,
  \end{align}
  where $\kappa :=  \hat{\tau}\cdot \nabla \hat{n} \cdot \hat{\tau}$ is the curvature of the streamline.
\end{lemma}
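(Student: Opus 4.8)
The plan is to exploit the fact that $\{\hat n,\hat\tau\}$ is an orthonormal frame adapted to the level sets of $\psi$, so that the two formulae are simply the two diagonal entries of the Hessian $H:=\nabla\otimes\nabla\psi$ in this frame. Since $\nabla\psi\cdot\nabla^\perp\psi=0$ and $|\nabla\psi|=|\nabla^\perp\psi|$, the vectors $\hat n$ and $\hat\tau$ form an orthonormal basis of $\mathbb{R}^2$, and therefore the trace of the symmetric matrix $H$ decomposes as
\[
\Delta\psi=\tr H=\hat n\cdot H\cdot\hat n+\hat\tau\cdot H\cdot\hat\tau.
\]
This identity reduces \eqref{formn} to \eqref{formt}: once $\hat\tau\cdot H\cdot\hat\tau=|\nabla\psi|\kappa$ is established, subtracting it from $\Delta\psi$ gives $\hat n\cdot H\cdot\hat n=\Delta\psi-|\nabla\psi|\kappa$ at once. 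So all the content is in proving \eqref{formt}.

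To prove \eqref{formt} I would compute $\kappa=\hat\tau\cdot\nabla\hat n\cdot\hat\tau$ directly. Writing $q:=|\nabla\psi|$ and $\hat n_j=\partial_j\psi/q$, and using $\partial_i q=(H\nabla\psi)_i/q$ (obtained by differentiating $q^2=\nabla\psi\cdot\nabla\psi$), one finds
\[
\partial_i\hat n_j=\frac{H_{ij}}{q}-\frac{\partial_j\psi\,(H\nabla\psi)_i}{q^3}.
\]
Contracting both indices against $\hat\tau_i=(\nabla^\perp\psi)_i/q$ splits $\kappa$ into two terms. The decisive cancellation is that the second term carries the factor $\sum_j\partial_j\psi\,(\nabla^\perp\psi)_j=\nabla\psi\cdot\nabla^\perp\psi=0$ and hence vanishes, leaving
\[
\kappa=\frac{1}{q^3}(\nabla^\perp\psi)\cdot H\cdot(\nabla^\perp\psi)=\frac{1}{q}\,\hat\tau\cdot H\cdot\hat\tau,
\]
which is exactly \eqref{formt} after multiplying through by $q=|\nabla\psi|$.

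The computation is short and requires no analytic input beyond $\psi\in C^2$ and $\nabla\psi\neq0$ on the region in question. There is no real obstacle; the only point demanding care is the bookkeeping of $\partial_i|\nabla\psi|$ and the recognition that the orthogonality relation $\nabla\psi\cdot\nabla^\perp\psi=0$ annihilates the term coming from the variation of the length $|\nabla\psi|$, which is precisely what distinguishes the tangential second derivative from a naive directional derivative. I would also double-check the orientation convention so that the definition $\kappa=\hat\tau\cdot\nabla\hat n\cdot\hat\tau$ is consistent with the Frenet relation for the level curve; with the sign conventions fixed in the statement, the two formulae follow with the signs as written.
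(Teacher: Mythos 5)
Your proposal is correct and follows essentially the same route as the paper: both reduce \eqref{formn} to \eqref{formt} via the trace identity $\Delta\psi=\hat n\cdot H\cdot\hat n+\hat\tau\cdot H\cdot\hat\tau$ in the orthonormal frame, and both establish \eqref{formt} by differentiating $\hat n=\nabla\psi/|\nabla\psi|$ and observing that the term carrying $\partial_i|\nabla\psi|$ is annihilated upon contraction with $\hat\tau$ because $\nabla\psi\cdot\nabla^\perp\psi=0$. Your index computation is a correct, slightly more explicit rendering of the paper's formula for $|\nabla\psi|\,\nabla\hat n$.
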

\begin{proof}
We being by noticing that
 \be
 \Delta \psi = {\rm tr} \nabla \otimes \nabla \psi = \hat{n}\cdot \nabla \otimes \nabla \psi\cdot \hat{n} +  \hat{\tau}\cdot \nabla \otimes \nabla \psi\cdot \hat{\tau}.
 \ee
Next, a direct calculation gives
 \be
 |\nabla \psi|  \nabla \hat{n}= \nabla \otimes \nabla \psi -( \hat{n}\cdot \nabla \otimes \nabla \psi \cdot \hat{n})  \hat{n}\otimes  \hat{n} - ( \hat{n}\cdot \nabla \otimes \nabla \psi \cdot \hat{\tau})  \hat{n}\otimes  \hat{\tau},
 \ee
so that
$
|\nabla \psi| \hat{\tau}\cdot \nabla \hat{n} \cdot \hat{\tau} =  \hat{\tau}\cdot \nabla \otimes \nabla \psi\cdot \hat{\tau}
$ yielding \eqref{formt} as claimed.
Combining with the above we obtain \eqref{formn}.
\end{proof}

 Before stating the next required lemma, we briefly review action-angle coordinates.  For an in depth discussion, see Arnol'd \cite{A89}, pg  297.
 The streamfunction $\psi$ plays the role of a Hamiltonian for tracer dynamics since $u=\nabla^\perp\psi$.  We assume that the level sets $\{ \psi= c\}$ are simply connected Jordan curves, so that all the integral curves of $u$ (solutions of $\dot{X}= u\circ X$) are periodic orbits.  This system allows for a canonical transformation to action-angle variables, $(x,y)\mapsto (J,\theta)$ which satisfy the following criteria
 \begin{enumerate}
 \item $\psi (x,y)= \Psi(J(x,y))$ for all $(x,y)\in \Omega$ and some function $\Psi$
 \item $\int_{\{\psi= c\}} \rmd \theta = 1$,
 \item $\nabla^\perp J \cdot \nabla \theta=1$.
 \end{enumerate}
Introduce the frequency $\mu^{-1} = \Psi'(J)$.  The phase flow satisfies
\be
\frac{\rmd J}{\rmd t}=0, \qquad
\frac{\rmd \theta}{\rmd t}=\mu^{-1}.
\ee
 The first is simply because the system travels along paths of fixed $J$.  The latter follows from
 \begin{align}
 \frac{\rmd \theta}{\rmd t} &= \frac{\rmd \theta}{\rmd x }\frac{\rmd x}{\rmd t}+  \frac{\rmd \theta}{\rmd y }\frac{\rmd y}{\rmd t}=\nabla^\perp \psi \cdot \nabla \theta=\Psi'(J) (J_x \theta_y-J_y \theta_x )=\mu^{-1} (\nabla^\perp J \cdot \nabla \theta)= \mu^{-1}.
 \end{align}
 The period for each orbit  $\{\psi = c\}$ is  the travel time $\mu:= \mu(c)$ given by
\be
\mu(c) =\oint_{\{ \psi=c\}} \frac{\rmd \ell}{|\nabla \psi|},
\ee
and the line element for each orbit satisfies
 \be
 \rmd \ell = \sqrt{\dot{x}^2 +\dot{y}^2} =  |\nabla \psi| \rmd t = \mu^{-1} |\nabla J| \rmd t = |\nabla J| \rmd \theta.
 \ee

We now give the rule for differentiating functions integrated over streamlines.

\begin{lemma}\label{geolem}
For $f\in C^1(\Omega)$ we have
\be
\frac{\rmd}{\rmd c} \oint_{\{ \psi= c\}}f \frac{\rmd \ell}{|\nabla \psi|} =\oint_{\{ \psi= c\}} \frac{ \nabla \psi \cdot \nabla f - f\left(\omega- 2 \kappa |\nabla \psi| \right)}{|\nabla \psi|^2} \frac{\rmd \ell}{|\nabla \psi|},
\ee
 where $\kappa =  \hat{\tau}\cdot \nabla \hat{n} \cdot \hat{\tau}$ is the curvature of the streamline and $\omega:=\Delta \psi$.
\end{lemma}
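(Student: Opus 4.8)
The plan is to recognize the right-hand side as the divergence of an explicit vector field and then to invoke the divergence theorem together with the coarea formula, rather than to push the action--angle change of variables (which is available from the discussion above but leads to messier bookkeeping of the non-orthogonality of the $(J,\theta)$ coordinates). Concretely, I would introduce the field
\be
X := \frac{f\,\nabla\psi}{|\nabla\psi|^2},
\ee
and observe that on each streamline $\{\psi=c\}$, with outward unit normal $\hat{n}=\nabla\psi/|\nabla\psi|$, one has $X\cdot\hat{n}\,\rmd\ell = \tfrac{f}{|\nabla\psi|}\,\rmd\ell = f\,\rmd s$, so that the quantity to be differentiated is exactly the flux $\oint_{\{\psi=c\}} X\cdot\hat{n}\,\rmd\ell$.

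The first key step is a transport identity: for any $C^1$ field $X$ and any regular value $c$,
\be
\frac{\rmd}{\rmd c}\oint_{\{\psi=c\}} X\cdot\hat{n}\,\rmd\ell = \oint_{\{\psi=c\}} (\div X)\,\rmd s .
\ee
I would prove this locally on the collar $A_{c,c+h}=\{c\le\psi\le c+h\}$ bounded by $\{\psi=c\}$ and $\{\psi=c+h\}$: the divergence theorem gives $\int_{A_{c,c+h}}\div X\,\rmd A = \oint_{\{\psi=c+h\}}X\cdot\hat{n}\,\rmd\ell - \oint_{\{\psi=c\}}X\cdot\hat{n}\,\rmd\ell$ (the inner boundary carries $-\hat{n}$ since $A$ lies on the $\psi>c$ side), while the coarea formula gives $\int_{A_{c,c+h}}\div X\,\rmd A = \int_c^{c+h}\big(\oint_{\{\psi=c'\}}(\div X)\,\rmd s\big)\,\rmd c'$; dividing by $h$ and letting $h\to0$ yields the identity. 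Applying it to the $X$ above reduces the lemma to computing $\div X$.

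The remaining step is the divergence computation, which is where Lemma \ref{lemcurv} enters. Writing $\div X = \tfrac{\nabla\psi\cdot\nabla f}{|\nabla\psi|^2} + f\,\div\!\big(\tfrac{\nabla\psi}{|\nabla\psi|^2}\big)$ and expanding the second term gives $\tfrac{\Delta\psi}{|\nabla\psi|^2} - \tfrac{2}{|\nabla\psi|^4}\,\nabla\psi\cdot(\nabla\otimes\nabla\psi)\cdot\nabla\psi$; using $\nabla\psi\cdot(\nabla\otimes\nabla\psi)\cdot\nabla\psi = |\nabla\psi|^2\,\hat{n}\cdot\nabla\otimes\nabla\psi\cdot\hat{n}$ and then the identity \eqref{formn}, namely $\hat{n}\cdot\nabla\otimes\nabla\psi\cdot\hat{n}=\Delta\psi-|\nabla\psi|\kappa$, collapses this to $\div\!\big(\tfrac{\nabla\psi}{|\nabla\psi|^2}\big) = -\tfrac{\omega-2\kappa|\nabla\psi|}{|\nabla\psi|^2}$ with $\omega=\Delta\psi$. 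Hence $\div X$ equals precisely the integrand on the right-hand side of the Lemma, completing the proof. The only genuine subtlety — and the step I would be most careful about — is the local justification of the transport identity at a regular level set (the sign of the normal on the collar and the appeal to coarea), since the divergence computation itself is a short deterministic calculation once \eqref{formn} is in hand.
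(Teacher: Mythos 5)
Your proposal is correct and is essentially the paper's own argument in dual form: the paper applies Green's theorem to the circulation of $F=g\nabla^\perp\psi$ with $g=f/|\nabla\psi|^2$ and converts the resulting area integral to an iterated integral via action--angle coordinates, which is exactly your flux-of-$X$ plus coarea step, since $\nabla^\perp\cdot(g\nabla^\perp\psi)=\div(g\nabla\psi)$ and $\rmd A = \rmd\theta\,\rmd J = \frac{\rmd\ell}{|\nabla\psi|}\,\rmd\psi$. Both proofs then conclude with the same divergence computation and the identity \eqref{formn} from Lemma \ref{lemcurv}.
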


\begin{proof}[Proof of Lemma \ref{geolem}]
 First we show that for $g\in C^1(\Omega)$, we have
\be
\frac{\rmd}{\rmd c} \oint_{\{ \psi= c\}}g |\nabla \psi|  \rmd \ell =\oint_{\{ \psi= c\}} \frac{1}{|\nabla \psi|}\Big( \nabla g\cdot \nabla \psi + g \Delta \psi \Big) \rmd \ell.
\ee
 To establish this, set $F:= g\nabla^\perp  \psi$ and $\rmd l = (\dot{x} \rmd t , \dot{y} \rmd t)$.  Then $F\cdot \rmd l = g|\nabla \psi|\rmd \ell$.  By Green's theorem,
 \be
 \oint_{\{ \psi= c\}} F\cdot \rmd l = \iint_{\overline{\{ \psi= c\}}} \nabla^\perp\cdot F  \rmd x\rmd y= \iint_{\overline{\{ \psi= c\}}} [ \nabla g \cdot \nabla \psi+ g\Delta \psi] \rmd x\rmd y.
 \ee
 Then, for two values $c_0\leq c_1$ in the range of $\psi$, we have
 \begin{align}
  \oint_{\{ \psi= c_1\}} F\cdot \rmd l -   \oint_{\{ \psi= c_0\}} F\cdot \rmd \ell &= \iint_{\{ c_0\leq \psi\leq c_c\}}[ \nabla g \cdot \nabla \psi+ g\Delta \psi] \rmd x\rmd y\\
  &= \iint_{\{ c_0\leq \psi\leq c_c\}}[ \nabla g \cdot \nabla \psi+ g\Delta \psi] \rmd \theta \rmd J,
 \end{align}
 where we made a change of variables to action angle coordinates (the Jacobian is unity).  Finally,
\be
\iint h  \rmd \theta \rmd J = \iint h  \mu(\psi) \rmd \theta \rmd \psi= \iint h  \rmd t \rmd \psi =  \iint \frac{h}{|\nabla \psi|} \rmd \ell \rmd \psi,
\ee
for any integrable $h$.
The result follows from taking the coincidence limit $c_1\to c_0$ of the difference quotients.  The lemma then follows by applying the formula with $g= f/|\nabla \psi|^2$.  This gives
\be
\frac{\rmd}{\rmd c} \oint_{\{ \psi= c\}}\frac{f}{ |\nabla \psi|} \rmd \ell =\oint_{\{ \psi= c\}} \frac{ \nabla \psi \cdot \nabla f + f\left(\Delta \psi - 2\widehat{\nabla \psi} \cdot \nabla \otimes \nabla \psi \cdot \widehat{\nabla \psi}\right)}{|\nabla \psi|^3} \rmd \ell.
\ee
To work this into the stated form we appeal to Lemma \ref{lemcurv}.
\end{proof}

 \subsection*{Acknowledgments}   We thank T. M.  Elgindi and H. Q. Nguyen for  insightful discussions.  We thank  T. M.  Elgindi in particular for pointing out Lemma \ref{h1h2h3lem}.   The work of PC was partially supported by NSF grant DMS-1713985 and by the
Simons Center for Hidden Symmetries and Fusion Energy  award \# 601960.
Research of TD was partially supported by
NSF grant DMS-1703997.  Research of DG was partially supported by
the Simons Center for Hidden Symmetries and Fusion Energy.

\end{document}